
\documentclass[10 pt, leqno]{amsart}

\usepackage {amsfonts}
\usepackage{amsthm}
\usepackage{amssymb}
\usepackage{latexsym}
\usepackage{amsmath}
\usepackage{mathrsfs}
\usepackage[all]{xy}
\usepackage{comment}
\usepackage{hyperref}

\pagestyle{plain}

\theoremstyle{plain}
\newtheorem{tw}{Theorem}[section]

\newtheorem {lem} [tw]{Lemma}
\newtheorem {prop}[tw] {Proposition}

\newtheorem{cor}[tw]{Corollary}

\theoremstyle{definition}
\newtheorem {deft}[tw] {Definition}
\newtheorem {rem} [tw]{Remark}

\newcommand{\bc} {\Bbb C}
\newcommand{\bn}{\Bbb N}
\newcommand{\br}{\Bbb R}

\newcommand{\bt}{\Bbb T}
\newcommand{\bz}{\Bbb Z}

\newcommand{\alg} {\mathsf{A}}
\newcommand {\Tr} {{\textup{Tr}}}
\newcommand {\card} {{\textup{card}}}
\newcommand {\QG} {{\mathbb{G}}}
\newcommand {\QH} {{\mathbb{H}}}
\newcommand {\QK} {{\mathbb{K}}}
\newcommand {\QL} {{\mathbb{L}}}
\newcommand {\id} {{\textup{id}}}

\newcommand{\tu}{\textup}

\newcommand{\Hil}{\mathsf{H}}

\newcommand{\Jnd}{\mathcal{J}}
\newcommand{\ib}{\bar{i}}

\newcommand{\cat}{\mathcal{C}}

\newcommand{\Hom}{\tu{Hom}}

\newcommand{\Com}{\Delta}
\newcommand{\sph}{\mathbb{S}}


\newcommand{\ot}{\otimes}

\newcommand{\wt}{\widetilde}

\numberwithin{equation}{section}

\keywords{Quantum symmetry groups, quantum isometry groups, liberation, representation theory of quantum groups, Tannakian
categories} \subjclass[2000]{ Primary 46L65, Secondary  05E10, 16T05, 46L54, 46L87}

\begin{document}

\author{Teodor Banica}
\address{Department of Mathematics, Cergy-Pontoise University, 95000 Cergy-Pontoise, France} \email{teodor.banica@u-cergy.fr}

\author{Adam Skalski}
\address{Department of Mathematics and Statistics,  Lancaster University,
Lancaster, LA1 4YF, United Kingdom \newline \hspace*{0.3cm} Institute of Mathematics of the Polish Academy of Sciences,
ul.\'Sniadeckich 8, 00-956 Warszawa, Poland} \email{a.skalski@lancaster.ac.uk}

\begin{abstract} We introduce and study natural two-parameter families of quantum groups motivated on one hand by the liberations of
classical orthogonal groups and on the other by quantum isometry groups of the duals of the free groups. Specifically, for each
pair $(p,q)$ of non-negative integers we define and investigate quantum groups $O^+(p,q)$, $B^+(p,q)$, $S^+(p,q)$ and $H^+(p,q)$ corresponding to,
respectively, orthogonal groups, bistochastic groups, symmetric groups and  hyperoctahedral groups. In the first three cases the
new quantum groups turn out to be related to the (dual free products of) free quantum groups studied earlier.
For $H^+(p,q)$ the situation is different and we show that $H^+(p,0) \approx
\tu{QISO}(\widehat{\mathbb{F}_p})$, where the latter can be viewed as a liberation of the classical isometry group of the
$p$-dimensional torus.

\end{abstract}

\title{\bf Two-parameter families of quantum symmetry groups}

\maketitle

\section*{Introduction}

Compact quantum groups have entered mathematics in late 1980s (see \cite{woronowicz98}, \cite{VanDaele} and references therein).
Recent years have brought an increased interest in investigating quantum groups as quantum symmetry or isometry groups of
classical or quantum spaces (\cite{BanJFA}, \cite{Teosurvey}, \cite{Deb}, \cite{ours}, \cite{TeoDeb}). One particular approach to
constructing quantum symmetry groups is the so-called `liberation' of classical compact groups. This technique, developed by the
first named author and his collaborators, is  based on choosing a suitable collection of relations satisfied by the functions on
the group in question and then relaxing the commutativity assumptions (\cite{TeoRoland}). On the other hand in recent work of
Bhowmick and the second named author (motivated by \cite{Deb}) a quantum isometry group has been associated with the (dual of)
each finitely generated discrete group; in particular the quantum isometry groups of the duals of the free groups were computed.
The last result and the form of the obtained quantum isometry groups suggested considering a general framework in which  a
variation of universal quantum orthogonal groups of \cite{univ} is realised by replacing the usual selfadjointness of entries by
imposing specific relations between entries and their adjoints. Although it turns out that this new choice actually does not
introduce nontrivial modifications on the level of quantum orthogonal groups, the situation changes if we consider  quantum
symmetric groups (\cite{Wang}) or quantum hyperoctahedral groups (\cite{hyperoct}). In this paper we present a full study of
these deformations and cast them in the language of `liberations' studied in \cite{TeoRoland}.

As we are going to consider at the same time deformed adjoint relations and the usual selfadjointness conditions on some other
entries, we will throughout the paper work with two parameters, $p$ (representing the deformed relations) and $q$ (the standard
ones). These parameters are assumed to be non-negative integers, not
simultaneously equal to $0$. As the deformed relations involve pairs of coordinates, the quantum groups that we study will have `rank' (the dimension of the
fundamental unitary representation) equal to $2p+q$.    We will consider deformed quantum versions of orthogonal, bistochastic, symmetric and
hyperoctahedral groups. Main body of the results obtained in this paper can be summarized in the following table (the
categories of representations are described in terms of noncrossing, possibly `bulleted', partitions, with the details given in
Section \ref{coreps-partitions}; $i(p,q)$ is defined to be equal to $1$ if $pq=0$, and to $2$ if $pq>0$):

\vspace*{0.3cm}
\begin{tabular}{|c|c|c|c|c|}\hline
 quantum group $\QG$ &  $O^+(p,q)$ & $B^+(p,q)$ & $S^+(p,q)$ & $H^+(p,q)$ \\ \hline
algebra $C(\QG)$ & $A_o(2p+q)$ & $A_b(p,q)$ & $A_h(p) \star A_s(q)$ & $A_h(p,q)$ \\ \hline cat.\ of reps & $NC_2$ & $NC_{12}$ &$NC_{\bullet} \star NC$ & $NC_{\tu{even}}$ \\
\hline classical version & $O_{2p+q}$ & $O_{2p+q-i(p,q)}$ & $H_p \times S_q$ &  $(\bt^p\rtimes H_p )\times H_q$ \\
\hline
\end{tabular}

\vspace*{0.3cm} As explained earlier, if $p=0$ we obtain the `liberated objects' studied in \cite{TeoRoland}. On the other hand
when $q=0$ we obtain the following (in particular $H^+(p,0)$  is the quantum isometry group of the $C^*$-algebra of the free
group $\mathbb{F}_p$ discovered in \cite{groupdual} and in a way providing a starting point for the considerations in this work):

\vspace*{0.3cm}

\begin{tabular}{|c|c|c|c|c|}\hline
 quantum group $\QG$ &  $O^+(p,0)$ & $B^+(p,0)$ & $S^+(p,0)$ & $H^+(p,0)$ \\ \hline
quantum symmetry group of  & $\mathbb{S}^{2p}$ & $\mathbb{S}^{2p}_{-}$ & $[0,1]^p$ & $\widehat{\mathbb{F}_p}$
\\
\hline classical version & $O_{2p}$ & $O_{2p-1}$ & $H_p $ & $\bt^p \rtimes H_{p}$ \\
\hline classical symmetry group of  & $\mathcal{S}^{2p}$ & $\mathcal{S}^{2p}_{-}$ & $[0,1]^p$ &$\mathbb{T}^p$
\\   \hline
\end{tabular}
\vspace*{0.3cm}

Above $\mathcal{S}^{2p}$ and $\mathbb{S}^{2p}$ denote respectively the usual sphere in $\br^{2p}$ and the free sphere studied in
\cite{TeoDeb}, and $\mathcal{S}^{2p}_{-}$ and $\mathbb{S}^{2p}_{-}$ denote the respective spheres with one coordinate
fixed. With the isomorphisms established in this paper the first two columns can be deduced from results in \cite{TeoDeb} (and the
third is a consequence of \cite{hyperoct}).

The detailed plan of the paper is as follows: in Section 1 we quote basic definitions, establish some terminology related to
compact quantum groups and recall the quantum (free) symmetry groups corresponding to orthogonal, symmetric, bistochastic and
hyperoctahedral groups. In Section 2 the study of their two-parameter counterparts begins with the analysis of the orthogonal,
bistochastic and symmetric quantum groups denoted respectively $O^+(p,q)$, $B^+(p,q)$ and $S^+(p,q)$. It turns out that all of
them can be described in terms of (the free products of) the one-parameter versions. Section 3 contains a detailed analysis of
the two-parameter quantum hyperoctahedral group $H^+(p,q)$; in particular we show that $H^+(p,0)$ coincides with the quantum
isometry group of the dual of the free group discovered in \cite{groupdual}. Section 4 is devoted to establishing the description of the
categories of representations of our quantum groups in terms of non-crossing (marked) partitions. Further in Section 5 theorems proved in Sections 2-4 are used to analyse the relations between the quantum groups studied in the paper: we investigate the
generation results, intersection results and inclusions of the form $\QG(p,0) \hat{\star}\, \QG (0,q) \subset \QG(p,q)$. We also
show there a fact conjectured in \cite{groupdual}: the two-parameter quantum hyperoctahedral group $H^+(p,q)$ may be viewed as a
quantum extension of the quantum symmetric group $S^+_{2p+q}$. In Section 6 we describe the classical versions of the quantum groups we
study to show that each of these quantum groups has a natural description as a liberation of a classical symmetry group. Finally in the last
section we discuss in what sense the family of quantum groups described in this paper exhausts the natural two-parameter
construction presented in Sections 2-3 and in the process discover another two-parameter quantum group, $H^+_s(p,q)$ which turns
out to be isomorphic to the dual free product of $H^{+4}_p$ and $H^+_q$.


\section{Compact quantum groups - definitions and notation} \label{Secdefcom}

In this section we recall the definition of a compact quantum group due to Woronowicz and introduce quantum orthogonal,
symmetric, hyperoctahedral and bistochastic groups. The minimal tensor product of $C^*$-algebras will be denoted by $\ot$,
algebraic tensor products by $\odot$. For $n \in \bn$ we denote the algebra of $n$ by $n$ complex matrices by $M_n$.

\begin{deft}
Let $\alg$ be a unital $C^*$-algebra and $\Com:\alg \to \alg\ot \alg$ be a unital $^*$-homomorphism satisfying the
coassociativity condition:
\[ (\Com \ot \id_{\alg}) \Com = (\id_{\alg} \ot \Com) \Com.\]
If additionally $\overline{\Com(\alg) (1 \ot \alg)} = \overline{\Com(\alg) (\alg \ot 1 )}= \alg \ot \alg$ we say that $\alg$ is
the algebra of continuous functions on a compact quantum group $\QG$ and usually write $\alg = C(\QG)$. A unique dense Hopf
$^*$-subalgebra of $C(\QG)$, the algebra of coefficients of finite-dimensional unitary representations of $\QG$,  will be
denoted by $R(\QG)$.
\end{deft}

  If $\QG$ is a compact quantum group and $n \in \bn$ then a unitary matrix $U=(U_{ij})_{i,j=1}^n\in M_n(C(\QG))$ is
called a \emph{fundamental representation} of $\QG$ (or a \emph{fundamental corepresentation} of $C(\QG)$) if for each $i,j
=1,\ldots,n$
\[\Com(U_{ij}) = \sum_{k=1}^n U_{ik} \ot U_{kj}\]
and the entries of $U$ generate $C(\QG)$  as a $C^*$-algebra. If $\QG$ admits a fundamental representation, it is called a
\emph{compact matrix quantum group}. This will be the case for all quantum groups considered in this paper; in fact they will be
defined via their respective fundamental representations.

In general there is an ambiguity in passing from $R(\QG)$ to $C(\QG)$
related to the fact that not all compact quantum groups are \emph{coamenable}. As all quantum groups studied in this paper will
be defined by universal properties, we will assume that $C(\QG)$ is the \emph{universal} completion of $R(\QG)$ (\cite{coamen}).

We will later need a free product construction  introduced in \cite{Wangfree}. If $\QG_1$, $\QG_2$ are compact quantum groups,
then the $C^*$-algebraic free product $C(\QG_1) \star C(\QG_2)$ has a natural structure of the algebra of continuous functions on
a compact quantum group, to be denoted $\QG_1 \hat{\star} \,\QG_2$. In particular if $U_1 \in M_n (C(\QG_1))$ and $U_2 \in M_n
(C(\QG_2))$ are respective fundamental corepresentations, then $\begin{bmatrix} U_1 & 0\\0&U_2\end{bmatrix} \in M_n (C(\QG_1)
\star C(\QG_2))$ is the fundamental corepresentation of $C(\QG_1 \hat{\star} \, \QG_2)$. The construction is dual to the usual
free product of discrete groups: when the quantum groups in question are duals of classical discrete groups, $\QG_1 =
\widehat{\Gamma_1}$, $\QG_2 = \widehat{\Gamma_2}$, then $\QG_1 \hat{\star} \,\QG_2 \approx \widehat{\Gamma_1 \star \Gamma_2}$.

 Let $n \in \bn$; it will denote the dimension of the fundamental representation of the compact quantum groups defined below.
 The following definition comes from \cite{univ}; here we recast it in the language described above.

\begin{deft}\label{uniorth}
Let $F\in M_n$ be an invertible matrix such that $F\bar{F}=cI_n$ for some $c \in \bc$. Let  $A_o(F)$ denote the universal
$C^*$-algebra generated by the entries of a unitary $U \in M_n \ot A_o(F)$ such that
\begin{equation} \label{UFbar} U = (F \ot 1) \bar{U} (F^{-1} \ot 1)\end{equation}
(here and in what follows a bar over the matrix denotes a matrix obtained by an entrywise conjugation of entries). When $U \in M_n
\ot A_o(F)$ is interpreted as the fundamental unitary corepresentation, we can view $A_o(F)$ as the algebra of continuous functions on the
compact quantum group denoted by $O^+(F)$. In particular if $F = I_n$, we write simply $A_o(n)$ and $O^+_n$ instead of $A_o(I_n)$
and $O^+(I_n)$.
\end{deft}

Recall that if $\alg$ is a $C^*$-algebra then a unitary matrix $U \in M_n(\alg)$ is called a \emph{magic unitary} if each entry
of $U$ is a projection. A unitary $U \in M_n(\alg)$ is called \emph{cubic} if its entries are selfadjoint and the products of
different entries lying in the same row or column are $0$. The following definitions come respectively from \cite{Wang} and
\cite{hyperoct}.

\begin{deft}\label{unisymm}
Denote by  $A_s(n)$ the universal $C^*$-algebra generated by the entries of an  $n$ by $n$ magic unitary $U$. When $U \in M_n \ot
A_s(n)$ is interpreted as the fundamental unitary corepresentation, we  view $A_s(n)$ as the algebra of continuous functions on the quantum
permutation group on $n$ elements,  $S^+_n$.
\end{deft}

\begin{deft}\label{unihyp} Denote by $A_h(n)$ the universal $C^*$-algebra generated by the entries of an  $n$ by $n$ cubic unitary $U$.
When $U \in M_n \ot
A_h(n)$ is interpreted as the fundamental unitary corepresentation, we  view $A_h(n)$ as the algebra of continuous functions on the
quantum hyperoctahedral group on $n$ coordinates, $H^+_n$.
\end{deft}

The quantum groups $O^+_n$, $S^+_n$ and $H^+_n$ are also called the free orthogonal quantum group, the free symmetric quantum
group and the free hyperoctahedral quantum group and can be respectively viewed as liberations of the compact groups $O_n$,
$S_n$ and $H_n$ (\cite{TeoRoland}). More information about their properties, including their interpretations as quantum symmetry
groups can be found in that paper and also respectively in \cite{TeoDeb}, \cite{Teosurvey} and  \cite{hyperoct}.

The following definition was introduced in \cite{TeoRoland}.

\begin{deft}\label{bistoch}
Let $n \in \bn$. Denote by  $A_b(n)$ the universal $C^*$-algebra generated by the entries of an $n$ by $n$ unitary $U$, which
has selfadjoint entries which sum to $1$ in each row/column. When $U \in M_n \ot A_b(n)$ is interpreted as the fundamental
unitary corepresentation, we view $A_b(n)$ as the algebra of continuous functions on the quantum bistochastic group in $n$ dimensions,
$B^+_n$.
\end{deft}

The condition on the sum of entries in each row/column being equal to $1$ is equivalent to stating that the vector
$[1,\ldots,1]^t$ is an eigenvector for both $U$ and $U^t$.  This observation leads to a natural isomorphism $B^+_n \approx
O^+_{n-1}$ (\cite{Raum}).

Relations between the $C^*$-algebras and quantum groups introduced above can be expressed via the following diagrams (arrows on
the level of algebras denote surjective unital $^*$-homomorphisms intertwining the respective coproducts):
\begin{equation} \label{diag1}\begin{matrix}
A_o(n)&\to&A_b(n)\\
\\
\downarrow&&\downarrow\\
\\
A_h(n)&\to&A_s(n)
\end{matrix}
\;\;\;\;\;\;\;\;\;\;\;\;\;\;\;\;\;\;\;\;
\begin{matrix}
O_n^+&\supset&B_n^+\\
\\
\cup&&\cup\\
\\
H_n^+&\supset&S_n^+
\end{matrix}\end{equation}

The diagram on the right suggests a number of  questions, for instance whether $O_n^+=<B_n^+,H_n^+>$, or whether $S_n^+=B_n^+\cap
H_n^+$  (at the level of classical versions, the answers are yes and yes).  Once these questions are properly formulated, for
instance in terms of tensor categories, the answers turn out to be positive, and can be deduced from \cite{TeoRoland}.
We will describe the details later, when we discuss similar problems in a more general 2-parameter framework.

\section{Quantum groups $O^+(p,q)$, $B^+(p,q)$ and $S^+(p,q)$} \label{OBS}
In this section we describe the  two-parameter  families of quantum groups generalising the quantum orthogonal, bistochastic and
symmetric groups described in Section \ref{Secdefcom}. We begin by introducing necessary notations.

Let $p, q \in \bn_0:=\bn \cup \{0\}$, $p^2+q^2>0$  and let $\rho= \tu{e}^{\frac{2 \pi i}{8}}$.   Put $\mathcal{F}= \left(\begin{array}{cc}0 & 1 \\
1 & 0\end{array}\right)$, $\mathcal{C}=\frac{1}{\sqrt{2}} \left(\begin{array}{cc}\rho & \rho^7 \\ \rho^3 &
\rho^5\end{array}\right)$, let $F_{p,0} \in M_{2p}$ be the matrix given by
\[ F_{p,0} = \left( \begin{array}{cccc} \mathcal{F}& 0& \cdots&0 \\ 0& \mathcal{F}& \cdots&0 \\ \vdots & \vdots & \ddots & \vdots \\ 0&0& \ldots
&\mathcal{F}
\end{array} \right)\]
and define $F_{p,q} \in M_{2p+q}$ by
\begin{equation} \label{Fpq}F_{p,q} = \left( \begin{array}{cc} F_{p,0} & 0\\
0 &I_q\end{array} \right)\end{equation} The matrix $F_{p,q}$ is a selfadjoint unitary, and a permutation matrix; moreover
$\overline{F_{p,q}} = (F_{p,q})^t =(F_{p,q})^*$. The matrix $C_{p,q}\in M_{2p+q}$ is defined in an analogous way, replacing
$\mathcal{F}$ by $\mathcal{C}$ in appropriate matrix blocks. Note that $\mathcal{C}$, so also $C_{p,q}$, is unitary.

Whenever we consider matrices of the size $2p+q$ we will denote the indices corresponding to the `$p$-part' by pairs $i\alpha$,
where $i\in\{0,1\}$ and $\alpha\in\{1,\ldots,p\}$ and to the `$q$-part' by  capital Latin letters running from $1$ to $q$.
Moreover we will use the notation $\overline{i}$  (where $\bar{0} =1$, $\bar{1}=0$, so that $\overline{\overline{i}}=i$). This
facilitates the description of the fact that coordinates in the `$p$-part' come naturally in pairs. To simplify the notation we
will also write $\Jnd_p=\{i\alpha:i\in \{0,1\},\alpha\in\{1, \ldots, p\}\}$, $\Jnd_{p,q} = \Jnd_p \cup \{1,\ldots,q\}$, and
extend the `bar' notation to indices in $\Jnd_{p,q}$, writing $\bar{z}=\ib \alpha$ if $z = i\alpha \in \Jnd_p$ and $\bar{z}=M$ if
$z=M\in \{1, \ldots,q\}$. The canonical basis in $\bc^{2p+q}$ will be often denoted by $(e_z)_{z \in \Jnd_{p,q}}$.


\subsection{Quantum group $O^+(p,q) \approx O^+_{2p+q}$}

As  the matrix $F_{p,q}$ defined in \eqref{Fpq} satisfies the conditions listed in Definition \ref{uniorth} we can consider
$A_o(p,q):= A_o(F_{p,q})$  and $O^+(p,q):= O^+(F_{p,q})$. Clearly $O^+(0,q) \approx O^+_q$; in fact the discussion in Section 5
of \cite{coact} implies that $O^+(p,q)\approx O^+_{2p+q}$. From our point of view it is important to consider the following
rephrasing of the above definitions:

\begin{tw}\label{thmOpq}
The algebra $A_o(p,q)$ is the universal $C^*$-algebra generated by elements $\{U_{z,y}: z,y \in \Jnd_{p,q}\}$ such that the
resulting $2p+q$ by $2p+q$ matrix $U$ is unitary and for each $i\alpha, j\beta \in \Jnd_p, M,N \in \{1, \ldots,q\}$ we have
\begin{equation} \label{ad1}  U_{i \alpha, j \beta}^* = U_{\bar{i} \alpha, \bar{j} \beta},\end{equation}
\begin{equation} \label{ad2} U_{i \alpha, N}^* = U_{\bar{i} \alpha, N},\end{equation}
\begin{equation} \label{ad3} U_{M, j \beta}^* = U_{M, \bar{j} \beta}, \end{equation}
\begin{equation} \label{ad4} U_{M,N}^* = U_{M,N}. \end{equation}
Moreover $A_o(p,q)$ with $U$ viewed as a fundamental corepresentation is the algebra of continuous functions on a compact quantum group
$O^+(p,q)\approx O^+_{2p+q}$.
\end{tw}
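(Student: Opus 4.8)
The plan is to separate the two assertions: the entrywise \emph{rephrasing} of the defining relation \eqref{UFbar}, and the abstract isomorphism $O^+(p,q)\approx O^+_{2p+q}$.

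For the rephrasing I would simply unfold \eqref{UFbar} coordinate by coordinate. The matrix $F_{p,q}$ of \eqref{Fpq} is the self-adjoint unitary permutation matrix implementing the involution $z\mapsto\bar z$ on $\Jnd_{p,q}$; that is, $(F_{p,q})_{z,z'}=\delta_{z,\bar z'}$ and $F_{p,q}^{-1}=F_{p,q}$. Substituting this into \eqref{UFbar} collapses the two sums and, since the bar over an operator entry is its adjoint, yields
\[ U_{z,y}^{\,*}=U_{\bar z,\bar y}, \qquad z,y\in\Jnd_{p,q}. \]
Reading this off for the four cases according to whether each of $z,y$ lies in $\Jnd_p$ or in $\{1,\dots,q\}$ (recalling $\bar M=M$) reproduces exactly \eqref{ad1}--\eqref{ad4}; conversely these four families reassemble into \eqref{UFbar}. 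Hence the two universal $C^*$-algebras have the same generators and relations and therefore coincide.

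For the isomorphism, the engine is a general transformation principle for the algebras $A_o(F)$, which I would establish first: conjugating the fundamental corepresentation by a fixed unitary $w\in M_{2p+q}$ turns $F$ into $wFw^t$. Concretely, if $U=(F\ot1)\overline{U}(F^{-1}\ot1)$ and $V:=(w\ot1)\,U\,(w^*\ot1)$, then, using that $w$ is a scalar matrix together with the identities $w^t\overline{w}=I$ and $\overline{w^*}=w^t$, one computes
\[ V=(wFw^t\ot1)\,\overline{V}\,\bigl((wFw^t)^{-1}\ot1\bigr), \]
and $wFw^t$ again satisfies the hypothesis of Definition \ref{uniorth} with the same constant $c$ (indeed $(wFw^t)\overline{(wFw^t)}=wF\overline{F}w^*=cI$). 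Since $U$ and $V$ generate the same algebra and conjugation by a scalar unitary commutes with the coproduct, the universal property upgrades $U\mapsto V$ to an isomorphism of compact quantum groups $O^+(wFw^t)\approx O^+(F)$.

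It then suffices to produce a unitary $w$ with $w\,I_{2p+q}\,w^t=ww^t=F_{p,q}$, for then the principle applied to $F=I_{2p+q}$ gives $O^+_{2p+q}=O^+(I_{2p+q})\approx O^+(F_{p,q})=O^+(p,q)$. Such a $w$ exists because $F_{p,q}$ is a symmetric unitary: by the Autonne--Takagi factorisation every symmetric unitary is of the form $ww^t$ with $w$ unitary. Here one can take $w$ block-diagonal, equal to $I_q$ on the $q$-part and to $i\,\mathcal{C}$ on each $2\times2$ block of the $p$-part, since a direct check gives $\mathcal{C}\mathcal{C}^t=-\mathcal{F}$ and hence $(i\mathcal{C})(i\mathcal{C})^t=\mathcal{F}$; this is precisely the role of the matrix $C_{p,q}$ introduced above. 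The only step demanding real care is the transformation principle, where one must keep the conjugation conventions straight (distinguishing $\overline{U}$, $U^t$ and $U^*$, and remembering that $w$ has scalar entries while those of $U$ are operators) and check that $wFw^t$ stays admissible so that $A_o(wFw^t)$ is defined; the rephrasing and the construction of $w$ are then routine. Alternatively, as noted in the text, $O^+(p,q)\approx O^+_{2p+q}$ can be quoted directly from the discussion in Section 5 of \cite{coact}.
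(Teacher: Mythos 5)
Your proposal is correct and follows essentially the same route as the paper: the entrywise unfolding of \eqref{UFbar} using that $F_{p,q}$ is the permutation matrix of $z\mapsto\bar z$ is the paper's first step in block form, and the isomorphism with $O^+_{2p+q}$ is obtained in the paper by exactly the conjugation you describe (there phrased as a direct computation showing that $U$ satisfies \eqref{ad1}--\eqref{ad4} iff $(C_{p,q}\ot 1)U(C_{p,q}^*\ot 1)$ has selfadjoint entries). Your packaging of the latter as a general principle $A_o(F)\approx A_o(wFw^t)$ together with a Takagi-type factorisation $ww^t=F_{p,q}$ is a clean and accurate reformulation of the same argument.
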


\begin{proof}
The first part is a direct consequence of the fact that $F_{p,q}$ has a block-matrix form and formulas:
\[ \begin{bmatrix}0 & 1 \\ 1 & 0  \end{bmatrix}  \begin{bmatrix}A^* & B^* \\ C^*  & D^*  \end{bmatrix}
= \begin{bmatrix}C^* & D^* \\ A^*  & B^*  \end{bmatrix}, \;\;\; \begin{bmatrix}A^* & B^* \\ C^*  & D^*  \end{bmatrix}
\begin{bmatrix}0 & 1 \\ 1 & 0  \end{bmatrix} = \begin{bmatrix}B^* & A^* \\ D^*  & C^*  \end{bmatrix} \] and
\[ \begin{bmatrix}0 & 1 \\ 1 & 0  \end{bmatrix}  \begin{bmatrix}A^* & B^* \\ C^*  & D^*  \end{bmatrix}  \begin{bmatrix}0 & 1 \\ 1 & 0  \end{bmatrix}
= \begin{bmatrix}D^* & C^* \\ B^*  & A^*  \end{bmatrix}, \]
which imply that if $U = (U_{z,y})_{z,y \in \Jnd_{p,q}}$ then $U = (F_{p,q} \ot 1) \bar{U} (F_{p,q} \ot 1)$ if and only if the entries of $U$ satisfy relations \eqref{ad1}-\eqref{ad4}.

The second part follows from the discussion before the theorem, but can be also seen directly: indeed, exploiting the equalities of the type
\[ \begin{bmatrix}\rho & \rho^7 \\ \rho^3 & \rho^5  \end{bmatrix}  \begin{bmatrix}A 
\\ C  
\end{bmatrix}
= \begin{bmatrix}\rho(A + \rho^6 C) 
 \\ \rho^3 (A + \rho^2C)  
\end{bmatrix}, \]
 \[ \begin{bmatrix}A & B
  \end{bmatrix} \begin{bmatrix}\rho^7 & \rho^5 \\ \rho & \rho^3  \end{bmatrix}
= \begin{bmatrix}\rho(\rho^6 A + B) & \rho^3(\rho^2 A + B) 
\end{bmatrix}  \] and
\[ \begin{bmatrix}\rho & \rho^7 \\ \rho^3 & \rho^5  \end{bmatrix}  \begin{bmatrix}A & B \\ C  & D  \end{bmatrix}
\begin{bmatrix}\rho^7 & \rho^5 \\ \rho & \rho^3  \end{bmatrix}  = \begin{bmatrix}A -i C + iB + D & -iA -C - B + iD \\
 iA - C -B -iD & A + iC - iB + D  \end{bmatrix}
 \]
one can check that if
$\alg$ is a $C^*$-algebra and $U \in M_{2p+q} (\alg)$ is a unitary matrix, then the entries of $U$ satisfy conditions displayed
in the theorem if and only if the entries of the unitary matrix $(C_{p,q} \ot 1_{\alg}) U ({C_{p,q}}^* \ot 1_{\alg})$ are
self-adjoint.
\end{proof}

Note that in terms of the notation introduced earlier the relations \eqref{ad1}-\eqref{ad4} can be summarized by saying
\begin{equation} \label{ad5} U_{z,y} = U^*_{\bar{z}, \bar{y}}, \;\;\;z,y \in \Jnd_{p,q}.\end{equation}

It is well known that algebraic relations between entries of the fundamental representation $U$ of a given quantum group can be interpreted as declaring certain scalar matrices to be elements of $\Hom (U^{\ot k}; U^{\ot l})$ for some $k,l \in \bn$ (see for
example \cite{Teoold}).

\begin{prop} \label{HomOpq}
The algebra $A_o(p,q)$ is the universal $C^*$-algebra generated by the entries of a unitary $2p+q$ by $2p+q$ matrix $U$ such that the
vector $\xi:= \sum_{i\alpha \in \Jnd_p} e_{i\alpha} \ot e_{\ib \alpha} + \sum_{M=1}^q e_M \ot e_M$ is a fixed vector for $U^{\ot
2}$ (in other words the map $1 \to \xi$ is  an element of $\Hom (1;U^{\ot 2})$).
\end{prop}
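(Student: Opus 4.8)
The plan is to show that, for a \emph{unitary} $U$, the single relation asserting that $\xi$ is a fixed vector for $U^{\ot 2}$ is equivalent to the family \eqref{ad1}--\eqref{ad4} (equivalently \eqref{ad5}), and then to conclude by the universal properties of the two algebras. First I would record the bookkeeping: $\xi = \sum_{z\in\Jnd_{p,q}} e_z \ot e_{\bar z}$, and the scalar matrix attached to $\xi$ is precisely $F_{p,q}$, since $(F_{p,q})_{z,y} = \delta_{y,\bar z}$ (the blocks $\mathcal{F}$ implement $i\alpha\mapsto\ib\alpha$ and $I_q$ fixes the $q$-part). This is what will let the final relation reappear in the form \eqref{UFbar}.

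Next I would expand the fixed-vector condition entrywise. Writing $U=\sum_{z,y}e_{zy}\ot U_{z,y}$, one has $U(e_y\ot 1)=\sum_x e_x\ot U_{x,y}$, so $U^{\ot 2}(\xi\ot 1)=\sum_{z,x,w} e_x\ot e_w\ot U_{x,z}U_{w,\bar z}$. Comparing the coefficient of $e_x\ot e_w$ in $U^{\ot 2}(\xi\ot 1)=\xi\ot 1$ yields the scalar relations $\sum_z U_{x,z}U_{w,\bar z}=\delta_{x,\bar w}\,1$ for all $x,w$. Using $U_{w,\bar z}=(UF_{p,q})_{w,z}$ together with $F_{p,q}^t=F_{p,q}$, I would repackage these into the single matrix identity $U F_{p,q} U^t = F_{p,q}$.

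The key — and slightly less automatic — step is the passage between this identity and the defining relation \eqref{UFbar}, and this is where unitarity is genuinely needed. Multiplying $U F_{p,q} U^t = F_{p,q}$ on the right by $(U^t)^{-1}=(U^*)^t=\bar U$ and using $F_{p,q}^{-1}=F_{p,q}$ gives $U=(F_{p,q}\ot 1)\bar U (F_{p,q}^{-1}\ot 1)$, which is exactly the relation defining $A_o(F_{p,q})=A_o(p,q)$ in Definition \ref{uniorth}; conversely \eqref{UFbar} returns $U F_{p,q} U^t=F_{p,q}$ and hence the fixed-vector condition. I expect the main point requiring care to be this two-sided manipulation: the fixed-vector relation looks a priori weaker than the full adjoint family, and one must check that unitarity ($U^{-1}=U^*$, whence $(U^t)^{-1}=\bar U$) really does upgrade it back to \eqref{ad1}--\eqref{ad4}; a pleasant feature worth noting is that a single fixed vector thereby encodes all four relation families at once.

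Finally, since the two sets of relations are equivalent for unitary $U$, the universal $C^*$-algebras generated by the entries of $U$ subject to each have identical presentations and are therefore canonically isomorphic, the isomorphism carrying fundamental representation to fundamental representation. Combined with Theorem \ref{thmOpq}, this identifies the algebra described in the statement with $A_o(p,q)$, which is the assertion.
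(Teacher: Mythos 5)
Your strategy -- expand $U^{\ot 2}\xi=\xi$ entrywise, recognize the scalar matrix of $\xi$ as $F_{p,q}$, and use unitarity to convert the resulting quadratic relations into the linear family \eqref{ad1}--\eqref{ad4} -- is essentially the paper's, just packaged as the matrix identity $UF_{p,q}U^t=F_{p,q}$ instead of entry by entry. The bookkeeping up to that identity, and the converse direction, are correct.

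The step that fails as written is the inversion. Over a noncommutative $C^*$-algebra the transpose is not anti-multiplicative, so $U^*U=UU^*=1$ does \emph{not} imply that $U^t$ is invertible, let alone that $(U^t)^{-1}=(U^{-1})^t=\bar U$: the identity $U^t\bar U=\bar U U^t=1$ is precisely unitarity of $\bar U$, which for a generic unitary over a $C^*$-algebra is false (this is why Wang's $A_u(n)$ must impose unitarity of $\bar U$ as a separate relation), and here it is one of the things being derived rather than an available input. So ``multiply $UF_{p,q}U^t=F_{p,q}$ on the right by $\bar U$'' is not a legal move. The repair is to multiply on the \emph{left} by $U^*$: since $U^*U=1$ this gives $F_{p,q}U^t=U^*F_{p,q}$, hence $U^t=F_{p,q}U^*F_{p,q}$ (using $F_{p,q}^2=I$), and because $F_{p,q}$ is a \emph{scalar} matrix one may transpose both sides (the identity $(FA)^t=A^tF^t$ does hold when $F$ has scalar entries) to obtain $U=F_{p,q}\bar U F_{p,q}$, i.e.\ \eqref{UFbar}, equivalently $U_{z,y}=U^*_{\bar z,\bar y}$ as in \eqref{ad5}. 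In entrywise form this is: multiply $\sum_c U_{z,c}U_{y,\bar c}=\delta_{y,\bar z}$ by $U^*_{z,v}$ on the left and sum over the row index $z$, using $(U^*U)_{v,c}=\delta_{v,c}$ -- which is, up to the side on which one multiplies, exactly the paper's computation. With that one correction your argument goes through and the concluding appeal to the two universal properties is fine.
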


\begin{proof}
Let $(U_{z,y})_{z,y \in \Jnd_{p,q}}$ be unitary. Then $U^{\ot 2} \xi = \xi$ if and only if
\begin{align*}
\sum_{z, y \in \Jnd_{p,q}} \left(\sum_{i\alpha \in \Jnd_p} U_{z,i\alpha} U_{y,\ib \alpha} + \sum_{M=1}^q U_{z,M} U_{y,M} \right)
e_z \ot e_y = \xi.
\end{align*}
This means that for example for each $j\beta \in \Jnd_q$ and $y \in \Jnd_{p,q}$ we must have
\[ \sum_{i\alpha \in \Jnd_p}
U_{j\beta,i\alpha} U_{y,\ib \alpha} + \sum_{M=1}^q U_{j\beta,M} U_{y,M} = \delta_{y}^{\bar{j}\beta}.\] Fix $z \in \Jnd_{p,q}$,
multiply the last formula by $U_{y,z}^*$ on the right and sum over $y \in \Jnd_{p,q}$. As $U$ is unitary, this yields
\[\sum_{i\alpha \in \Jnd_p}
U_{j\beta,i\alpha} \delta_{\ib \alpha}^z  + \sum_{M=1}^q U_{j\beta,M} \delta_{M}^z = U_{\bar{j} \beta, z}^* .\] Considering
respectively $z\in \{1, \ldots, q\}$ and $z \in \Jnd_p$ yields the first two displayed formulas in Theorem \ref{thmOpq}. It is
easy to see that the third and fourth formula can be obtained in an analogous way.
\end{proof}

The last proposition allows one to describe the  tensor category corresponding to the dual of
$O^+(p,q)$ in terms of Temperley-Lieb diagrams (\cite{Teoold}) or noncrossing pair partitions (\cite{TeoRoland}). We refer to
these papers for details; in Section \ref{coreps-partitions} below we state the corresponding results in terms of partitions.

\subsection{Quantum group $B^+(p,q) \approx O^+_{p+q-i(p,q)}$}

\begin{deft}\label{Bpq}
The algebra $A_b(p,q)$ is the universal $C^*$-algebra generated by elements $\{U_{z,y}, z,y \in \Jnd_{p,q}\}$ which satisfy all the
relations required of generators of $A_o(p,q)$ and additionally are such that entries in each row/column of the resulting unitary $U$
sum to $1$: for all $y \in \Jnd_{p,q}$
\[  \sum_{z \in \Jnd_{p,q}} U_{z,y} = \sum_{z \in \Jnd_{p,q}} U_{y,z}=1.\]
\end{deft}

Let $\eta = \sum_{z \in \Jnd_{p,q}} e_z \in \bc^{2p+q}$ and recall the matrix $C_{p,q}$ defined in the beginning of this section.

\begin{tw} \label{Bpqpreserv}
The algebra $A_b(p,q)$ is the algebra of continuous functions on a compact quantum group, denoted further $B^+(p,q)$. The unitary
$U=(U_{z,y})_{ z,y \in \Jnd_{p,q}} \in M_{2p+q} \ot A_b(p,q)$ is the fundamental representation of $B^+(p,q)$. The algebra
$A_b(p,q)$ is isomorphic to the universal $C^*$-algebra generated by entries of a $2p+q$ by $2p+q$ unitary $V$ which is orthogonal and
satisfies the condition $V (C_{p,q} \eta \ot 1) = C_{p,q} \eta \ot 1$.
\end{tw}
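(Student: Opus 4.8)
The plan is to treat $A_b(p,q)$ as a quotient of the compact quantum group algebra $A_o(p,q)$ from Theorem \ref{thmOpq} and to transport everything through the isomorphism $V=(C_{p,q}\ot 1)U(C_{p,q}^*\ot 1)$ already used in its proof.

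First I would establish the compact quantum group structure. Since $A_o(p,q)=A_o(F_{p,q})$ is the algebra of continuous functions on $O^+(p,q)$, it carries a coproduct $\Com$ determined by $\Com(U_{z,y})=\sum_{w\in\Jnd_{p,q}}U_{z,w}\ot U_{w,y}$, and $A_b(p,q)$ is its quotient by the bistochastic relations. To see that $\Com$ descends it suffices, by the universal property, to check that the elements $\sum_w U_{z,w}\ot U_{w,y}$ still satisfy those relations inside $A_b(p,q)\ot A_b(p,q)$. This is the short computation
\[\sum_{z\in\Jnd_{p,q}}\Com(U_{z,y})=\sum_{w}\Bigl(\sum_z U_{z,w}\Bigr)\ot U_{w,y}=\sum_w 1\ot U_{w,y}=1\ot 1,\]
and symmetrically for the row sums, each step using that rows and columns of $U$ sum to $1$ in each leg. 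Coassociativity is inherited from $A_o(p,q)$, the counit $\varepsilon(U_{z,y})=\delta_{z,y}$ manifestly respects the new relations, and the density (cancellation) conditions pass to the quotient; alternatively they follow from the general principle that a universal $C^*$-algebra generated by the entries of a unitary $U$ for which both $U$ and $\bar U$ are corepresentations is the algebra of a compact quantum group with $U$ fundamental. This gives the first two assertions, with $C(B^+(p,q))=A_b(p,q)$.

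For the last assertion I would invoke the identification recorded in the proof of Theorem \ref{thmOpq}: a unitary $U$ satisfies \eqref{ad1}--\eqref{ad4} if and only if $V:=(C_{p,q}\ot 1)U(C_{p,q}^*\ot 1)$ is unitary with self-adjoint entries, i.e.\ orthogonal, which identifies $A_o(p,q)$ with $A_o(2p+q)$. It remains to match the bistochastic relations. Writing $U=(C_{p,q}^*\ot1)V(C_{p,q}\ot1)$ and using $C_{p,q}^*C_{p,q}=I$, a direct substitution shows that the row condition $U(\eta\ot1)=\eta\ot1$ is equivalent to $V(C_{p,q}\eta\ot1)=C_{p,q}\eta\ot1$, while the column condition $U^t(\eta\ot1)=\eta\ot1$ is equivalent to $V^t(\bar C_{p,q}\eta\ot1)=\bar C_{p,q}\eta\ot1$.

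The crux is then to see that these two conditions collapse into the single displayed one. The key observation, which I would verify from the explicit values $\rho+\rho^7=\sqrt2$ and $\rho^3+\rho^5=-\sqrt2$, is that $C_{p,q}\eta$ is a \emph{real} vector: on each $p$-block $\mathcal{C}\,[1,1]^t=[1,-1]^t$, while the $q$-block contributes $[1,\ldots,1]^t$; hence $\bar C_{p,q}\eta=C_{p,q}\eta$. Since $V$ has self-adjoint entries and is unitary, $V^t=V^{-1}$, so $V\xi=\xi\iff V^t\xi=\xi$ for every $\xi$. Applying this to $\xi=C_{p,q}\eta=\bar C_{p,q}\eta$ shows that the row and column conditions are equivalent and both amount to $V(C_{p,q}\eta\ot1)=C_{p,q}\eta\ot1$, completing the identification. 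I expect the main obstacle to be precisely this last step: verifying that the column-sum relation is not an independent constraint on $V$ but is forced by the row relation together with orthogonality, which works only because the specific matrix $\mathcal{C}$ renders $C_{p,q}\eta$ real.
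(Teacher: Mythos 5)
Your proof is correct, and its overall architecture --- realizing $A_b(p,q)$ as a quotient of $A_o(p,q)$, descending the coproduct via the row/column-sum computation, and transporting everything through the conjugation $V=(C_{p,q}\ot 1)U(C_{p,q}^*\ot 1)$ from Theorem \ref{thmOpq} --- is the same as the paper's. The one place where you genuinely diverge is the treatment of the column-sum condition. The paper disposes of it \emph{before} conjugating: since $F_{p,q}\eta=\eta$ and $U^*=(F_{p,q}\ot 1)U^t(F_{p,q}\ot 1)$ for any unitary satisfying \eqref{UFbar}, one gets $U(\eta\ot 1)=\eta\ot 1\Leftrightarrow U^*(\eta\ot 1)=\eta\ot 1\Leftrightarrow U^t(\eta\ot 1)=\eta\ot 1$ directly, with no need to evaluate $C_{p,q}\eta$ at all. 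You instead transport both conditions to the $V$-side and collapse them there, which forces you to verify that $C_{p,q}\eta$ is a real vector (via $\mathcal{C}[1,1]^t=[1,-1]^t$, i.e.\ $\rho+\rho^7=\sqrt2$ and $\rho^3+\rho^5=-\sqrt2$). That verification is correct for the matrix $\mathcal{C}$ as defined in the paper, and the two arguments are really the same fact viewed in two coordinate systems (the identity $\mathcal{C}^t\mathcal{C}=\mathcal{F}$ intertwines them); the paper's version is marginally more economical, needing only $F_{p,q}\eta=\eta$. One caution worth flagging: your (correct) computation that $C_{p,q}\eta$ is real, so its real and imaginary parts are proportional, sits uneasily with the vector $\wt{\eta}_{p,q}$ displayed in the proof of the theorem immediately following this one, whose real and imaginary parts are claimed not to be proportional when $pq>0$; before relying on the reality of $C_{p,q}\eta$ elsewhere, one should reconcile the two conventions for $\mathcal{C}$.
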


\begin{proof}
As explained after Definition \ref{bistoch} the condition that the entries in each row and column of a matrix $U$ sum to
$1$ are equivalent to the fact that the vector $\eta$ is fixed both by $U$ and $U^t$. This observation (or a direct computation)
implies that $A_b(p,q)$ is the algebra of continuous functions on a compact quantum group. Further note that as $F_{p,q}
\eta=\eta$ and $F_{p,q}$ is a selfadjoint unitary we have the following string of equivalences for a unitary $U\in M_n
(A_o(p,q))$ satisfying the condition \eqref{UFbar} with $F=F_{p,q}$:
\begin{align*} U (\eta \ot 1) = \eta
\ot 1 & \;\Leftrightarrow \; U^*(\eta \ot 1) = \eta \ot 1 \;\Leftrightarrow \; (F_{p,q} \ot 1) U^t (F_{p,q} \ot 1) (\eta \ot 1) =
\eta \ot 1 \\&\;\Leftrightarrow  U^t (\eta \ot 1) = \eta \ot 1, \end{align*} so that the condition on the sum of entries in each
column of a unitary $U$ as above being equal to $1$ follows from the analogous condition for rows.

In the last part of the proof of Theorem \ref{thmOpq} we noticed that the transformation between the fundamental unitary in
$O^+(p,q)$ and that of $O^+_{2p+q}$ is implemented by conjugating with the unitary matrix $C_{p,q}$. Hence to prove the last
statement it suffices to note that a unitary $U$ fixes the vector $\eta$ if and only if $(C_{p,q}\ot 1) U (C_{p,q}^*\ot 1)$ fixes
$C_{p,q} \eta$.
\end{proof}

When $q=0$ or $p=0$ the above theorem implies that the `deformed' quantum bistochastic group coincides with the one studied in
\cite{TeoRoland}.

\begin{tw}
The following isomorphisms hold:
\[ B^+(p,0)\approx B^+_{2p} \approx O^+_{2p-1}, \;\;\; B^+(0,q)\approx B^+_{q} \approx O^+_{q-1},\]
\[ B^+(p,q)\approx O^+_{2p+q-2} \textrm{ for } pq>0,\]
\[ B^+(p,1) \approx B^+(p,0).\]
\end{tw}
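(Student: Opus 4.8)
The backbone of the argument is Theorem \ref{Bpqpreserv}, which presents $A_b(p,q)$ as the universal $C^*$-algebra of an orthogonal (unitary, self-adjoint-entried) matrix $V\in M_{2p+q}$ subject only to $V(C_{p,q}\eta\ot 1)=C_{p,q}\eta\ot 1$. I would first isolate the reduction principle behind $B^+_n\approx O^+_{n-1}$ (the remark after Definition \ref{bistoch}, \cite{Raum}) in the following form: if $W\subseteq\br^{2p+q}$ is the (real) space of vectors fixed by the fundamental representation and $d=\dim_\br W$, then picking a classical $g\in O_{2p+q}$ with $gW=\mathrm{span}(e_{2p+q-d+1},\dots,e_{2p+q})$ and conjugating $V\mapsto (g\ot1)V(g^t\ot1)$ — an automorphism of $A_o(2p+q)$ — converts the fixed-vector relations into $Ve_j=e_j$ $(j>2p+q-d)$, forcing $V=\mathrm{diag}(V',I_d)$ with $V'$ the universal generator of $A_o(2p+q-d)$. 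Hence $B^+(p,q)\approx O^+_{2p+q-d}$ and the entire theorem reduces to computing $d$, the multiplicity of the trivial representation inside the fundamental one.

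The degenerate cases fall out at once. For $p=0$ there is no deformation, so $A_b(0,q)=A_b(q)$ and $B^+(0,q)=B^+_q$; here $C_{0,q}\eta=\sum_M e_M$ is a single nonzero real vector, $d=1$, and the principle yields $O^+_{q-1}$. For $q=0$ the fixed vector is $C_{p,0}\eta=\sum_\alpha(e_{0\alpha}-e_{1\alpha})$, again one nonzero real vector, so $d=1$ and we obtain $O^+_{2p-1}$; to match $B^+_{2p}$ I would use that $B^+_{2p}$ is $A_o(2p)$ with the all-ones vector fixed and that $O_{2p}$ carries any nonzero vector to any other of equal length, so the same conjugation gives $B^+(p,0)\approx B^+_{2p}$.

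The heart of the proof is the case $pq>0$, where the claim is $d=2$. Writing $C_{p,q}\eta=\xi_p+\xi_q$ with $\xi_p=\sum_\alpha(e_{0\alpha}-e_{1\alpha})$ and $\xi_q=\sum_M e_M$, the point I would have to establish is that $V$ is forced to fix $\xi_p$ and $\xi_q$ \emph{separately}, so that $W=\mathrm{span}_\br\{\xi_p,\xi_q\}$ is two-dimensional. The mechanism I would exploit is the asymmetry of the adjoint relations \eqref{ad1}--\eqref{ad4}: the $p$-block obeys the paired conjugation $U_{i\alpha,y}^{*}=U_{\ib\alpha,y}$ while the $q$-block is genuinely self-adjoint, and this difference of ``reality type'' should prevent the row/column-sum relation from mixing the two blocks; equivalently, in the orthogonal picture the homogeneous $p$- and $q$-components of the pinned-down vector are of different nature and must be fixed individually. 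The cleanest rigorous route is the Tannakian one of Section \ref{coreps-partitions}: since the category of $B^+(p,q)$ is $NC_{12}$, I would compute $\Hom(1;U)$ as the span of the colour-admissible singleton diagrams, of which there are two (the $p$-coloured and the $q$-coloured singleton) when both colours occur and only one otherwise. Granting $d=2$, the reduction principle delivers $O^+_{2p+q-2}$. I expect the verification that the two singletons give \emph{independent} morphisms — that the deformation really does keep the two blocks from merging, in contrast with the classical bistochastic group — to be the main technical obstacle.

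The last isomorphism is then purely formal: for $p>0$ and $q=1$ the previous paragraph gives $B^+(p,1)\approx O^+_{2p+1-2}=O^+_{2p-1}$, which is precisely the algebra found for $B^+(p,0)$, so $B^+(p,1)\approx B^+(p,0)$.
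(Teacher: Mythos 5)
Your overall framework --- reduce everything to the real dimension $d$ of the space of vectors fixed by the conjugated orthogonal matrix $V$, then rotate that space into standard position and invoke the argument of \cite{Raum} --- is exactly the paper's, and your treatment of the cases $pq=0$ and of the final deduction $B^+(p,1)\approx B^+(p,0)$ matches the paper's proof. The gap is precisely where you say it is: you never prove that $d=2$ when $pq>0$, i.e.\ that $V$ is forced to fix $\xi_p$ and $\xi_q$ \emph{separately}, and neither of the two mechanisms you gesture at would close it. The appeal to an ``asymmetry of reality type'' between the blocks is not an argument. Worse, the Tannakian fallback points the other way: the category the paper computes for $B^+(p,q)$ in Theorem \ref{catOHB} is the plain, uncoloured $NC_{12}$, and Corollary \ref{HomBpq} states that the representation category coincides with that of $B^+_{2p+q}$; the set $NC_{12}(0,1)$ contains a single singleton, whose associated vector is $\eta$ itself, so that description yields $\dim\Hom(1;U)=1$, not $2$. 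There are no ``colour-admissible'' singletons in the category attached to $B^+(p,q)$ --- coloured partitions enter the paper only for $S^+(p,q)$ and the free products.

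The paper's actual argument for $pq>0$ is of a different and entirely elementary nature: it asserts that $C_{p,q}\eta$ is proportional to a \emph{complex} vector $\wt{\eta}_{p,q}$ whose real and imaginary parts are linearly independent over $\br$ exactly when $pq>0$ (they span the plane generated by your $\xi_p$ and $\xi_q$), and then uses the fact that a matrix with selfadjoint entries fixes a complex vector if and only if it fixes its real and imaginary parts separately --- whence $d=2$. Note that this is in direct tension with your own computation $C_{p,q}\eta=\xi_p+\xi_q$, which is a real vector; if the vector pinned down by the bistochastic condition in the orthogonal picture really were real, the whole $pq>0$ case would collapse to $d=1$ and give $O^+_{2p+q-1}$ instead of $O^+_{2p+q-2}$. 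So the single computation you must pin down --- identifying a unitary conjugation that simultaneously makes the entries of $V$ selfadjoint and carries $\eta$ to a vector with genuinely independent real and imaginary parts --- is exactly the step you deferred, and it cannot be outsourced to the $NC_{12}$ picture.
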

\begin{proof}
The only isomorphism in the first line that needs to be proved is  $B^+(p,0)\approx B^+_{2p}$ (the ones involving the quantum orthogonal
groups are the consequences of \cite{Raum}, as explained after Definition \ref{bistoch}). Due to Theorem \ref{Bpqpreserv} the
fundamental representation of $B^+(p,0)$ can be defined as a $2p$ by $2p$ unitary matrix $U$ with selfadjoint entries which
satisfies the condition $U (C_{p,0} \eta \ot 1) = C_{p,0} \eta \ot 1$. It is clear that in the above condition $C_{p,0} \eta $
can be replaced by a non-zero scalar multiple; in particular by the vector $[1, -1, \ldots, 1, -1]^{t}$. But that vector can be
mapped by a real orthogonal matrix onto $\eta$, so also onto $[1, 0,0, \ldots, 0]^{t}$, and the argument of \cite{Raum} implies
that the desired isomorphism holds.

Assume now that $pq>0$. Using once again Theorem \ref{Bpqpreserv} a fundamental representation of $B^+(p,q)$ can be defined
as a $2p+q$ by $2p+q$ unitary matrix $U$ with selfadjoint entries which satisfies the condition $U (C_{p,q} \eta \ot 1) = C_{p,q}
\eta \ot 1$. One can check that the vector $C_{p,q} \eta$ is proportional to the vector $\wt{\eta}_{p,q}\in \bc^{2p+q}$ given by
\begin{align*} \wt{\eta}_{p,q}&= [\underbrace{1-i, i-1,\ldots, 1-i, i-1}_{2p \textup{ times}},\underbrace{1+i, \ldots,1+i}_{q \textup{
times }}]^T\\ &= [\underbrace{1, -1,\ldots, 1,-1}_{2p \textup{ times}},\underbrace{1, \ldots,1}_{q \textup{ times }}]^T + i
[\underbrace{-1, 1,\ldots, -1, 1}_{2p \textup{ times}},\underbrace{1, \ldots,1}_{q \textup{ times }}]^T   \end{align*} As $U$ has
selfadjoint entries, it preserves $\wt{\eta}_{p,q}$ if and only if it preserves its real and imaginary parts; equivalently, it
preserves vectors \[  [\underbrace{1, -1,\ldots, 1, -1}_{2p \textup{ times}},\underbrace{0, \ldots,0}_{q \textup{ times }}]^T
\] and
\[  [\underbrace{0, 0,\ldots, 0, 0}_{2p \textup{ times}},\underbrace{1, \ldots,1}_{q \textup{ times
}}]^T. \] Repeating an earlier argument we find a matrix in $O_{2p} \times O_q \subset O_{2p+q}$ mapping these vectors
respectively to $[1, 0, \ldots,0]^T$ and $[0,\ldots,0,1]^T$; this provides the isomorphism $B^+(p,q)\approx O^+_{2p+q-2}$, which
implies in particular that $B^+(p,1)\approx B^+_{p,0}$.
\end{proof}

Note that the argument in the second part of the above proof can be framed in general terms -- for $n \geq 3$ the group of $n$ by
$n$ orthogonal matrices preserving a fixed non-zero vector $v\in \bc^n$  is either isomorphic to $O_{n-1}$ (when the real and
imaginary parts of $v$ are proportional to each other) or to $O_{n-2}$ (when they are not); the same applies to $O^+_n$.

The following result is a consequence of Proposition \ref{HomOpq} and the arguments in the proof of Theorem \ref{Bpqpreserv}.

\begin{cor} \label{HomBpq}
The algebra $A_b(p,q)$ is the universal $C^*$-algebra generated by the entries of a unitary $2p+q$ by $2p+q$ matrix $U$ such that the
vector $\xi$ defined in Proposition \ref{HomOpq} is a fixed vector for $U^{\ot 2}$ and the vector $\eta = \sum_{z \in \Jnd_{p,q}}
e_z$ is a fixed vector for $U$.
\end{cor}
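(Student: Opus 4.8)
The plan is to read the two conditions in the corollary off as the two ingredients already at our disposal and then to match their conjunction with the full list of relations defining $A_b(p,q)$. By Proposition \ref{HomOpq} the requirement that $\xi$ be fixed by $U^{\ot 2}$ is precisely the set of adjoint relations \eqref{ad1}--\eqref{ad4} (equivalently \eqref{ad5}), so it encodes exactly the $A_o(p,q)$ structure; thus it remains only to match the second condition, that $\eta$ be fixed by $U$, with the bistochastic requirement of Definition \ref{Bpq} that the entries of $U$ sum to $1$ along each row and each column.

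First I would record the elementary dictionary $(U\eta)_z = \sum_{y\in\Jnd_{p,q}} U_{z,y}$ and $(U^t\eta)_z = \sum_{y\in\Jnd_{p,q}} U_{y,z}$, so that fixing $\eta$ by $U$ is the row-sum condition and fixing $\eta$ by $U^t$ is the column-sum condition, both of which Definition \ref{Bpq} demands. The key observation is that, once the $A_o(p,q)$ relations hold, these two are equivalent: this is exactly the chain of equivalences in the proof of Theorem \ref{Bpqpreserv}, which uses $F_{p,q}\eta = \eta$, the self-adjointness and unitarity of $F_{p,q}$, and the rewriting $U^* = (F_{p,q}\ot 1)U^t(F_{p,q}\ot 1)$ coming from \eqref{UFbar}. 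Hence, in the presence of the $\xi$-condition, imposing $U\eta = \eta$ automatically forces $U^t\eta = \eta$, and so the single vector condition on $\eta$ already yields the complete bistochastic requirement.

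With this in hand I would conclude by the standard two-sided universal-property comparison. Writing $B$ for the algebra described in the corollary, the fundamental unitary of $A_b(p,q)$ satisfies the $A_o(p,q)$ relations and has row/column sums equal to $1$, hence fixes $\xi$ under $U^{\ot 2}$ by Proposition \ref{HomOpq} and fixes $\eta$; universality of $B$ produces a $*$-homomorphism $B \to A_b(p,q)$ carrying generators to generators. Conversely the fundamental unitary of $B$ fixes $\xi$ and $\eta$, so by Proposition \ref{HomOpq} it satisfies the $A_o(p,q)$ relations and, by the equivalence just recalled, is bistochastic; universality of $A_b(p,q)$ yields the reverse $*$-homomorphism, and the two maps are mutually inverse on generators. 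The only point requiring care—and the main obstacle such as it is—is the order of the argument: the passage from ``$\eta$ fixed by $U$'' to the column-sum condition is legitimate only after the $A_o(p,q)$ relations have been extracted from the $\xi$-condition, so Proposition \ref{HomOpq} must be invoked before the Theorem \ref{Bpqpreserv} equivalence is applied.
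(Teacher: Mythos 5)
Your proposal is correct and follows exactly the route the paper intends: the paper states this corollary as "a consequence of Proposition \ref{HomOpq} and the arguments in the proof of Theorem \ref{Bpqpreserv}," which is precisely your combination of translating the $\xi$-condition into the relations \eqref{ad1}--\eqref{ad4} and then using the chain of equivalences (via $F_{p,q}\eta=\eta$) to recover the column-sum condition from the row-sum condition. Your remark about the order of the argument -- that the $A_o(p,q)$ relations must be extracted first before the row/column equivalence applies -- is a correct and worthwhile elaboration of what the paper leaves implicit.
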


The above corollary implies that the category of representations of $B^+(p,q)$ coincides with that of $B^+(2p+q)$
(\cite{TeoRoland}), see Section \ref{coreps-partitions}.

\subsection{Quantum group $S^+(p,q)\approx H^+_p \hat{\star} \, S^+_q$}

The free quantum group of permutations of $n$ elements may be viewed as the universal $C^*$-algebra generated by entries of an
orthogonal matrix which are additionally required to be orthogonal projections. This motivates the following definition.

\begin{deft}\label{Spq}
The algebra $A_s(p,q)$ is the universal $C^*$-algebra generated by projections $\{U_{z,y}, z,y \in \Jnd_{p,q}\}$ which satisfy all the
relations required of generators of $A_o(p,q)$.
\end{deft}

As for every  magic unitary entries lying in the same row or column are pairwise orthogonal, the generators of $A_s(p,q)$ satisfy the following relations:
\[ U_{z,y}^* U_{z,x} = U_{z,y} U_{z,x} = 0, \;\;U_{y,z}^* U_{x,z} = U_{y,z} U_{x,z}=0 \;\;z,y,x \in \Jnd_{p+q}, x \neq y.\]

\begin{prop}\label{directspq}
The algebra $A_s(p,q)$ is the universal $C^*$-algebra generated by two families of projections $\{U_{i\alpha,j\beta}:i\alpha, j\beta
\in\Jnd_p\}$ and $\{U_{M,N};M,N \in \{1, \ldots,q\}\}$, such that both matrices $(U_{i\alpha,j\beta})_{i\alpha, j\beta
\in\Jnd_p}$ and $(U_{M,N})_{M,N=1}^q$ are magic unitaries and moreover
\begin{equation} U_{i \alpha, j \beta} = U_{\bar{i}
\alpha, \bar{j} \beta}, \;\;\; i\alpha, j\beta \in \Jnd_p. \label{adjointhp}\end{equation}
\end{prop}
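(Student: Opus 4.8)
The plan is to show that the defining relations of $A_s(p,q)$ force the generators to split into two independent blocks—the `$p$-part' indexed by $\Jnd_p$ and the `$q$-part' indexed by $\{1,\ldots,q\}$—with no algebraic interaction between them, and that within each block the stated relations hold. I would start from the observation recorded in \eqref{ad5}: since the generators $U_{z,y}$ of $A_s(p,q)$ are projections (hence selfadjoint), the adjoint relation $U_{z,y}=U^*_{\bar z,\bar y}$ becomes simply $U_{z,y}=U_{\bar z,\bar y}$. Restricting this to indices in $\Jnd_p$ gives precisely \eqref{adjointhp}, while for indices $M,N\in\{1,\ldots,q\}$, where the bar acts trivially, it gives no new information.

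The key step is to verify that the two sub-blocks are each magic unitaries and that the `mixed' entries (of the form $U_{i\alpha,N}$ or $U_{M,j\beta}$) vanish, so that $U$ is genuinely block-diagonal. First I would use the magic-unitary/cubic structure: in a magic unitary the entries of each row sum to $1$ and are mutually orthogonal projections. Combining $U_{i\alpha,N}=U_{\bar i\alpha,N}$ (from \eqref{ad2}, now with selfadjoint entries) with the column-orthogonality $U_{i\alpha,N}U_{\bar i\alpha,N}=0$ forces $U_{i\alpha,N}^2=U_{i\alpha,N}U_{\bar i\alpha, N}=0$; since $U_{i\alpha,N}$ is a projection this yields $U_{i\alpha,N}=0$, and symmetrically $U_{M,j\beta}=0$. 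Thus every mixed entry vanishes, $U=\left(\begin{smallmatrix}U^{(p)} & 0\\ 0 & U^{(q)}\end{smallmatrix}\right)$, and the unitarity of $U$ together with the row/column sums passes to each block: $(U_{i\alpha,j\beta})$ and $(U_{M,N})$ are each magic unitaries in their own right (the block $U^{(p)}$ has size $2p$, the block $U^{(q)}$ has size $q$).

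Having established that any representation of $A_s(p,q)$ restricts to a pair of magic unitaries satisfying \eqref{adjointhp}, I would then argue the converse to get the universal-algebra identification: given two magic unitaries $(U_{i\alpha,j\beta})$ and $(U_{M,N})$ with \eqref{adjointhp} imposed, the block-diagonal matrix they assemble is automatically unitary, has the magic-unitary property overall (so its entries are projections), and satisfies all four adjoint relations \eqref{ad1}--\eqref{ad4}—the mixed relations \eqref{ad2}, \eqref{ad3} hold vacuously because the mixed entries are zero. Hence the block-diagonal $U$ satisfies exactly the defining relations of $A_s(p,q)$, and by the universal property the two descriptions generate the same $C^*$-algebra with the same universal relations.

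I expect the main obstacle to be the vanishing of the mixed entries: one must be careful that it is the \emph{combination} of the adjoint relation identifying $U_{i\alpha,N}$ with $U_{\bar i\alpha,N}$ and the projection/orthogonality structure that kills these entries, rather than either ingredient alone. Once the block-diagonal form is secured, the remaining verification—that each block is a magic unitary and that \eqref{adjointhp} is the only surviving relation tying entries together—is routine bookkeeping of the sort already carried out in the proof of Theorem~\ref{thmOpq}. The resulting decomposition is exactly what underlies the isomorphism $S^+(p,q)\approx H^+_p\,\hat\star\,S^+_q$ announced in the subsection title, since \eqref{adjointhp} is precisely the defining cubic-type relation of $A_h(p)$ on the $2p\times 2p$ block.
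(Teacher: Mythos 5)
Your proposal is correct and follows essentially the same route as the paper: the key point in both is that the relation $U_{i\alpha,N}=U_{\bar i\alpha,N}$ combined with the mutual orthogonality of the projections in a column of a magic unitary forces the mixed entries to vanish, after which the block-diagonal decomposition and the identification of the two blocks as magic unitaries satisfying \eqref{adjointhp} is routine. The extra detail you supply on the converse (universal-property) direction is implicit in the paper's argument but not a different method.
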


\begin{proof}
It suffices to show that whenever $i\alpha \in \Jnd_p$ and $M \in \{1, \ldots,q\}$ then $U_{i \alpha, M} = 0 = U_{M, i\alpha}$.
But the matrix $\{U_{z,y}, z,y \in \Jnd_{p,q}\} \in M_{2p+q}(A_s(p,q))$ is a magic unitary, so each of its columns consists of
mutually orthogonal projections, and as we have $U_{i \alpha, M} = U_{\bar{i} \alpha, M}$, it follows that $U_{i \alpha, M}=0$.
The second equality follows from the orthogonality of projections in each row of a magic unitary.
\end{proof}

\begin{tw} \label{thSpq}
The algebra $A_s(p,q)$ is the algebra of continuous functions on a compact quantum group, denoted further $S^+(p,q)$. The unitary
$U=(U_{z,y})_{ z,y \in \Jnd_{p,q}} \in M_{2p+q} \ot A_s(p,q)$ is the fundamental representation of $S^+(p,q)$. The algebra
$A_s(p,q)$ is isomorphic to the free product $A_h(p) \star A_s(q)$; on the level of quantum groups we have $S^+(p,q)\approx H^+_p
\, \hat{\star} \, S^+_q$.
\end{tw}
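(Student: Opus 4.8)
The plan is to build directly on Proposition \ref{directspq}, which already isolates the two independent blocks of generators. By that proposition $A_s(p,q)$ is the universal $C^*$-algebra generated by a $q\times q$ magic unitary $(U_{M,N})_{M,N=1}^q$ together with a $2p\times 2p$ magic unitary $(U_{i\alpha,j\beta})_{i\alpha,j\beta\in\Jnd_p}$ subject only to the extra relation $U_{i\alpha,j\beta}=U_{\bar{i}\alpha,\bar{j}\beta}$, and crucially with \emph{no} relations linking the two families. Since the first family is, by Definition \ref{unisymm}, precisely a presentation of $A_s(q)$, and since a universal $C^*$-algebra generated by two families of elements with no cross-relations is by its universal property the $C^*$-algebraic free product of the two universal algebras they separately generate, the whole theorem reduces to a single point: identifying the algebra $D$ generated by the symmetric $2p\times 2p$ magic unitary with $A_h(p)$.

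To prove $D\approx A_h(p)$ I would exhibit an explicit pair of mutually inverse $*$-homomorphisms. In one direction, set $V_{\alpha\beta}:=U_{0\alpha,0\beta}-U_{0\alpha,1\beta}$ for $\alpha,\beta\in\{1,\ldots,p\}$. Each $V_{\alpha\beta}$ is selfadjoint, and since $U_{0\alpha,0\beta}$ and $U_{0\alpha,1\beta}$ are orthogonal projections (distinct entries in the row $0\alpha$ of a magic unitary) one gets $V_{\alpha\beta}^2=U_{0\alpha,0\beta}+U_{0\alpha,1\beta}$. Checking that $V=(V_{\alpha\beta})$ is a cubic unitary (Definition \ref{unihyp}) is then direct: the same-row relations $V_{\alpha\beta}V_{\alpha\gamma}=0$ and the normalisation $\sum_\gamma V_{\alpha\gamma}^2=1$ follow at once from the magic relations in the row $0\alpha$, while the same-column relations $V_{\alpha\beta}V_{\gamma\beta}=0$ ($\alpha\neq\gamma$) are where the symmetry relation does the work: the two potentially troublesome cross-terms $U_{0\alpha,0\beta}U_{0\gamma,1\beta}$ and $U_{0\alpha,1\beta}U_{0\gamma,0\beta}$ are rewritten, via $U_{0\gamma,1\beta}=U_{1\gamma,0\beta}$ and $U_{0\alpha,1\beta}=U_{1\alpha,0\beta}$, as products of distinct entries in the single column $0\beta$, hence vanish. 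In the other direction, given a cubic unitary $V$ one notes that $V_{\alpha\beta}^2$ is a projection and $V_{\alpha\beta}^3=V_{\alpha\beta}$, so that $\tfrac12(V_{\alpha\beta}^2\pm V_{\alpha\beta})$ are orthogonal projections; defining $U_{i\alpha,j\beta}:=\tfrac12\bigl(V_{\alpha\beta}^2+(-1)^{i+j}V_{\alpha\beta}\bigr)$ one verifies that $(U_{i\alpha,j\beta})$ is a symmetric magic unitary. Universality of the two presentations makes these assignments mutually inverse, giving $D\approx A_h(p)$ and hence the algebra isomorphism $A_s(p,q)\approx A_h(p)\star A_s(q)$.

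It then remains to upgrade this to an isomorphism of quantum groups, i.e. to check that the resulting isomorphism $\Phi\colon A_s(p,q)\to A_h(p)\star A_s(q)$ intertwines the coproducts, where the target carries the coproduct of $H^+_p\,\hat{\star}\,S^+_q$ coming from the block-diagonal fundamental corepresentation described in Section \ref{Secdefcom}. On the $q$-block this is immediate, since $\Com(U_{M,N})=\sum_{L=1}^q U_{M,L}\ot U_{L,N}$ (the mixed terms drop out because $U_{M,i\alpha}=U_{i\alpha,N}=0$ by Proposition \ref{directspq}), which is exactly the coproduct of $A_s(q)$. On the $p$-block one uses the cubic relation $V_{\alpha\gamma}V_{\alpha\delta}=\delta_{\gamma\delta}V_{\alpha\gamma}^2$ to get $\Com(V_{\alpha\beta}^2)=\sum_\gamma V_{\alpha\gamma}^2\ot V_{\gamma\beta}^2$, and then expands $(\Phi\ot\Phi)\Com(U_{i\alpha,j\beta})=\sum_{k\gamma}\Phi(U_{i\alpha,k\gamma})\ot\Phi(U_{k\gamma,j\beta})$; summing over the doubling index $k\in\{0,1\}$ annihilates exactly the terms of odd total degree in $V$ (because $\sum_k(-1)^{i+k}=\sum_k(-1)^{k+j}=0$), leaving precisely $\Com(\Phi(U_{i\alpha,j\beta}))$. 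This yields $S^+(p,q)\approx H^+_p\,\hat{\star}\,S^+_q$.

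I expect the main obstacle to be the two sign-sensitive computations that run in parallel: the same-column verification for $V$ and the cancellation in the coproduct check. Both hinge on converting cross-terms into products inside a single fixed column by means of the symmetry relation $U_{i\alpha,j\beta}=U_{\bar{i}\alpha,\bar{j}\beta}$, and tracking the sign $(-1)^{i+j}$ through the doubling of the $p$-indices is the one place where an error would break the argument.
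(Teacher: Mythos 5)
Your proof is correct, and it follows the same overall skeleton as the paper's — both reduce via Proposition \ref{directspq} to the free product $A_s(p,0)\star A_s(0,q)$ and then identify $A_s(p,0)$ with $A_h(p)$ — but the two arguments diverge at that last, central step. The paper observes that the symmetric $2p\times 2p$ magic unitary can be permuted (odd rows/columns to the front) into a \emph{sudoku} unitary $\left(\begin{smallmatrix}a&b\\ b&a\end{smallmatrix}\right)$ and then invokes Definition 5.2 and Theorem 6.2 of the hyperoctahedral paper \cite{hyperoct}, where the equivalence of the sudoku and cubic presentations of $A_h(p)$ is established; you instead reprove that equivalence in place, via the explicit mutually inverse assignments $V_{\alpha\beta}=U_{0\alpha,0\beta}-U_{0\alpha,1\beta}$ and $U_{i\alpha,j\beta}=\tfrac12\bigl(V_{\alpha\beta}^2+(-1)^{i+j}V_{\alpha\beta}\bigr)$, with the sign computations you flag (same-column orthogonality of $V$ via the symmetry relation, and the $\sum_k(-1)^{i+k}=0$ cancellation in the coproduct check) all coming out correctly. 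What your route buys is self-containedness — no appeal to the external sudoku theorem — at the cost of the two index-chasing verifications. One further structural difference: the paper first establishes the compact quantum group structure of $A_s(p,q)$ directly, by checking that the candidate coproduct sends each generator to a projection, whereas you obtain the coproduct a posteriori by transporting the one on $A_h(p)\star A_s(q)$ through $\Phi$; this is legitimate since your intertwining computation shows precisely that the transported coproduct acts as $U_{z,y}\mapsto\sum_x U_{z,x}\ot U_{x,y}$, but it does make the first assertion of the theorem logically dependent on the isomorphism rather than independent of it.
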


\begin{proof}
Due to Proposition \ref{directspq} it suffices to consider separately the cases $p=0$ and $q=0$; as we have $A_s(0,q) \approx A_s(q)$
(with the isomorphism preserving natural fundamental corepresentations), we can assume that $q=0$. Further it suffices to show
that if we define for each $i\alpha, j\beta \in \Jnd_p$
\[ \tilde{U}_{i\alpha, j\beta} := \sum_{k\gamma \in \Jnd_p} U_{i\alpha, k\gamma} \ot U_{k\gamma, j\beta}\] then
each $\tilde{U}_{i\alpha, j\beta}$ is a projection (the fact that the conditions in \eqref{adjointhp} will then be satisfied
follows from Theorem \ref{thmOpq}; and analogous statements for the potential antipode and counit follow from the fact that the
adjoint of a projection is a projection and $0,1\in \bc$ are projections). The last statement is however a direct consequence of
orthogonality of the rows/columns of the magic unitary; it can also be deduced from the fact that the map $U_{i\alpha, j\beta} \to \tilde{U}_{i\alpha, j\beta}$ is a $^*$-homomorphism.

Proposition \ref{directspq} implies also that $A_s(p,q) \approx A_s(p,0) \star A_s(0,q)$. Therefore it suffices to show that
$A_s(p,0) \approx A_h(p)$. This is however an immediate consequence of the fact that $A_h(p)$ can be defined via the requirement
that its fundamental corepresentation is a $2p$ by $2p$ \emph{sudoku} unitary, i.e.\ a magic unitary which has a block matrix
form $\left(\begin{array}{cc}a & b \\b &a \end{array}\right)$ (Definition 5.2 and Theorem 6.2 of \cite{hyperoct}). Indeed, formulas \eqref{ad1}-\eqref{ad4} imply that the fundamental unitary representation of $S^+(p,0)$ is a magic unitary of the form
\[ \begin{bmatrix} A& B & C&D & E&F & \cdots \\ B&A& D &C & F &E & \cdots\\G&H & I &J & K & L & \cdots
\\ H&G&J&I&L&K& \cdots \\ M&N&O&P&Q&R & \cdots \\N&M&P&O&R&Q & \cdots \\ \vdots & \vdots & \vdots & \vdots & \vdots &\vdots & \ddots
\end{bmatrix}, \]
which can be transformed into a sudoku unitary by permuting rows and columns (so that odd rows/columns remain in the same order but are shifted to the left/up so that they become first $p$ rows/columns).
\end{proof}

The next proposition facilitates the description of the category of the representations of $S^+(p,q)$ in terms of the noncrossing partitions, see Section \ref{coreps-partitions}.

\begin{prop} \label{HomSpq}
The algebra $A_s(p,q)$ is the universal $C^*$-algebra generated by the entries of a unitary $2p+q$ by $2p+q$ matrix $U$ such that the
vector $\xi$ defined in Proposition \ref{HomOpq} is a fixed vector for $U^{\ot 2}$ and the map $e_{i\alpha} \to e_{i\alpha} \ot
e_{\bar{i}\alpha}$, $e_M \to e_{M} \ot e_{M}$ defines a morphism in $\Hom (U; U^{\ot 2})$.
\end{prop}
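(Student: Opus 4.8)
The plan is to reduce both universal presentations to explicit relations among the generators $U_{z,y}$ and to compare them, using the fact that, by Proposition \ref{HomOpq}, for a unitary $U$ the fixedness of $\xi$ under $U^{\ot 2}$ is equivalent to the $A_o(p,q)$ adjoint relations \eqref{ad5}. Thus both presentations already share ``$U$ unitary together with $U_{z,y}=U^*_{\bar z,\bar y}$'', and the whole proposition comes down to showing that, in the presence of these, requiring the entries to be projections (Definition \ref{Spq}) is equivalent to requiring the map $T\colon e_z\mapsto e_z\ot e_{\bar z}$ (the map in the statement, written with the extended bar convention $\bar M=M$) to lie in $\Hom(U;U^{\ot 2})$. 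First I would unwind this last condition $(T\ot 1)U=U^{\ot 2}(T\ot 1)$ by applying both sides to $e_y\ot 1$. Since $U(e_y\ot 1)=\sum_z e_z\ot U_{z,y}$, the left-hand side is $\sum_z e_z\ot e_{\bar z}\ot U_{z,y}$, while the right-hand side is $\sum_{z,z'} e_z\ot e_{z'}\ot U_{z,y}U_{z',\bar y}$; comparing coefficients of $e_z\ot e_{z'}$ shows that the morphism condition is exactly the family of relations
\[ U_{z,y}\,U_{z',\bar y} = \delta_{z',\bar z}\, U_{z,y}, \qquad z,z',y\in\Jnd_{p,q}. \]

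For the direction that these relations force projections, I would specialize to $z'=\bar z$, giving $U_{z,y}U_{\bar z,\bar y}=U_{z,y}$, and substitute the adjoint relation $U_{\bar z,\bar y}=U_{z,y}^*$ to obtain $U_{z,y}U_{z,y}^*=U_{z,y}$. Taking adjoints of this identity yields $U_{z,y}U_{z,y}^*=U_{z,y}^*$, whence $U_{z,y}=U_{z,y}^*$; self-adjointness then turns $U_{z,y}U_{z,y}^*=U_{z,y}$ into $U_{z,y}^2=U_{z,y}$, so every entry is a projection. Together with unitarity and \eqref{ad5} this is precisely the defining data of $A_s(p,q)$. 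I expect this short deduction of self-adjointness to be the main obstacle, as it is the only place where the $\xi$-condition and the $T$-condition genuinely interact, and one must notice that passing to adjoints is what forces the entries to be self-adjoint rather than merely partial isometries.

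For the converse I would verify both conditions inside $A_s(p,q)$. The condition on $\xi$ is immediate from Proposition \ref{HomOpq}, since the generators satisfy the $A_o(p,q)$ relations by definition. For the displayed relations I would use that $U$ is a magic unitary in $A_s(p,q)$ (as recorded in the proof of Proposition \ref{directspq}), so that entries lying in a common column are mutually orthogonal projections. Rewriting $U_{z',\bar y}=U_{\bar{z'},y}$ via the adjoint relation (valid since the entries are self-adjoint), the product $U_{z,y}U_{z',\bar y}=U_{z,y}U_{\bar{z'},y}$ becomes a product of two entries of column $y$: it vanishes unless $\bar{z'}=z$, i.e. $z'=\bar z$, in which case it equals $U_{z,y}^2=U_{z,y}$. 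This is exactly the relation obtained above, so the two presentations impose the same relations on a unitary $U$; the identity map on generators therefore induces an isomorphism of the universal $C^*$-algebras intertwining the fundamental corepresentations, which completes the proof.
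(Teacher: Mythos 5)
Your proposal is correct and follows essentially the same route as the paper: both reduce the intertwiner condition to the relations $U_{z,y}U_{z',\bar y}=\delta_{z',\bar z}U_{z,y}$ and combine them with the $\xi$-relations \eqref{ad1}--\eqref{ad4} to conclude that the entries are projections, with the converse handled via orthogonality of entries in a common row/column of a magic unitary. Your explicit deduction of self-adjointness (taking adjoints in $U_{z,y}U_{z,y}^*=U_{z,y}$) simply fills in a step the paper leaves to the reader.
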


\begin{proof}
If $U$ is a unitary $2p+q$ by $2p + q$ matrix such that $\xi$ is a fixed vector for $U^{\ot 2}$, then by Proposition
\ref{HomOpq} the entries of $U$ satisfy the relations \eqref{ad1}-\eqref{ad4}. Further the condition that the map described in this proposition is an intertwiner between $U$ and $U^{\ot 2}$ is satisfied if and only if for each $i\alpha \in \Jnd_p$
\[ \sum_{j\beta \in \Jnd_p} e_{j\beta} \ot e_{\overline{j} \beta} \ot U_{j\beta, i\alpha} +
\sum_{N=1}^q e_{N} \ot e_{N} \ot U_{N, i\alpha} = \sum_{z,y \in \Jnd_{p+q}}
e_z \ot e_y \ot U_{z, i\alpha} U_{y, \ib\alpha}\]
and  for each  $M \in \{1, \ldots, q\}$
\[ \sum_{j\beta \in \Jnd_p} e_{j\beta} \ot e_{\bar{j} \beta} \ot U_{j\beta, M} +
\sum_{N=1}^q e_{N} \ot e_{N} \ot U_{N, M} = \sum_{z,y \in \Jnd_{p+q}} e_z \ot e_y \ot U_{z, M} U_{y, M}.\] The above conditions
hold if and only if for each $i\alpha, j \beta \in \Jnd_p, M,N \in \{1, \ldots, q\}$, $y \in \Jnd_{p+q}$
\[ U_{j\beta, i \alpha} U_{y, \ib \alpha} = \delta_y^{\overline{j} \beta} U_{j\beta, i \alpha},\]
\[ U_{N, i \alpha} U_{y, \ib \alpha} = \delta_y^{N} U_{N, i \alpha},\]
\[ U_{j\beta, M} U_{y, M} = \delta_y^{\overline{j} \beta} U_{j\beta, M},\]
\[ U_{N, M} U_{y, M} = \delta_y^{N} U_{N, M}.\]
Bearing in mind the relations \eqref{ad1}-\eqref{ad4} we see that  all entries of $U$ are orthogonal projections; hence $U$ is a
magic unitary. Conversely it is easy to check that if $U$ is a magic unitary whose entries satisfy \eqref{ad1}-\eqref{ad4} then
the last four displayed formulas automatically hold (see also the comment after Definition \ref{Spq}).
\end{proof}

\section{Quantum group $H^+(p,q)$}

The quantum groups we defined in the last section have all been shown to be closely related to
well-studied objects. The generalized quantum hyperoctahedral groups to be introduced here are genuinely new quantum groups,
connected to quantum hyperoctahedral groups (\cite{hyperoct}) and quantum isometry groups of $C^*$-algebras of free groups
(\cite{groupdual}).

The fact that an element $x$ of a $C^*$-algebra is an orthogonal projection can be written as $x=x^*x$. It is natural to consider
the condition $x=x x^* x$, which of course means that  $x$ is a partial isometry. This leads to the following definition.

\begin{deft}\label{Hpq}
The algebra $A_h(p,q)$ is the universal $C^*$-algebra generated by partial isometries $\{U_{z,y}:z,y \in \Jnd_{p+q}\}$ which satisfy all the relations required of generators of $A_o(p,q)$. 
\end{deft}

Here we also have some automatic `orthogonality', which will be described by the next proposition and its corollary. 

\begin{prop}\label{ort0}
Let $\alg$ be a $C^*$-algebra, $n \in \bn$ and let $U\in M_{n}(\alg)$ be a unitary matrix whose entries are partial isometries. Then
\[ U_{y,z} U_{x,z}^* = U_{z,y}^* U_{z,x} = 0, \;\;\;\;z,y,x \in \{1, \ldots,n\}, x \neq y.\]
\end{prop}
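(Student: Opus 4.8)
The plan is to deduce both vanishing statements from the two unitarity relations $U^*U=\ida$ and $UU^*=\ida$ combined with the defining partial-isometry identity $U_{z,y}=U_{z,y}U_{z,y}^*U_{z,y}$. First I would fix a column index $z$ and read off the $(z,z)$-entry of $U^*U=I_n$, obtaining
\[ \sum_{k=1}^n U_{k,z}^*\,U_{k,z}=\ida. \]
Since each $U_{k,z}$ is a partial isometry, every summand $p_k:=U_{k,z}^*U_{k,z}$ is the \emph{initial} projection of $U_{k,z}$. Dually, fixing a row index $z$ and reading off the $(z,z)$-entry of $UU^*=I_n$ gives
\[ \sum_{k=1}^n U_{z,k}\,U_{z,k}^*=\ida, \]
a sum of the \emph{final} projections $q_k:=U_{z,k}U_{z,k}^*$.

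Second, I would invoke the elementary $C^*$-algebraic fact that a finite family of projections summing to the unit is automatically mutually orthogonal. Concretely, if $r_1+\cdots+r_n=\ida$ with each $r_k$ a projection, then for $j\neq k$ one has $r_k\bigl(\sum_{l\neq k}r_l\bigr)r_k=r_k(\ida-r_k)r_k=0$; as each $r_l\geq 0$ this forces $r_k r_j r_k=0$, and then $r_jr_k=0$ follows since $r_jr_k=(r_j^{1/2})(r_j^{1/2}r_k)$ with $\|r_j^{1/2}r_k\|^2=\|r_k r_j r_k\|=0$. Applying this to $(p_k)_k$ yields $p_jp_k=0$ for $j\neq k$, and to $(q_k)_k$ yields $q_jq_k=0$ for $j\neq k$.

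Third, I would absorb these projections using the partial-isometry property. For the row identity, note that $U_{z,x}=q_xU_{z,x}$ and $U_{z,y}^*=U_{z,y}^*q_y$, so for $x\neq y$
\[ U_{z,y}^*\,U_{z,x}=U_{z,y}^*\,q_y\,q_x\,U_{z,x}=0. \]
Symmetrically, $U_{y,z}=U_{y,z}p_y$ and $U_{x,z}^*=p_xU_{x,z}^*$, whence for $x\neq y$
\[ U_{y,z}\,U_{x,z}^*=U_{y,z}\,p_y\,p_x\,U_{x,z}^*=0, \]
which establishes both claimed equalities. The only non-formal ingredient—and thus the crux of the argument—is the orthogonality lemma of the second step; once that is in place, everything reduces to bookkeeping with the initial and final projections attached to each matrix entry, so I expect no further obstacle.
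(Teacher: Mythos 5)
Your proof is correct and follows essentially the same route as the paper: extract the initial projections $P_{y,z}=U_{y,z}^*U_{y,z}$ and range projections $Q_{z,y}=U_{z,y}U_{z,y}^*$ from the diagonal entries of $U^*U=I_n$ and $UU^*=I_n$, observe that projections summing to the unit are mutually orthogonal, and absorb these projections via the partial-isometry identity. The only difference is that you spell out the orthogonality lemma, which the paper takes for granted.
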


\begin{proof}
For each $y,z \in \{1,\ldots,n\}$ denote the initial and range projections of $U_{y,z}$
respectively by $P_{y,z}$ and $Q_{y,z}$, so that
\[ P_{y,z} = U_{y,z}^* U_{y,z}, \;\;\; Q_{y,z} = U_{y,z} U_{y,z}^*.\]
Fix $z \in \{1, \ldots,n\}$. The unitarity of $U$ implies that $\sum_{y=1}^n Q_{z,y} = \sum_{y=1}^n P_{y,z}=1_{\alg}$, so that
for $y,x \in \{1, \ldots,n\}, x \neq y$ there is  $Q_{z,y} Q_{z,x} = P_{y,z} P_{x,z} =0$. The initial/range projection
interpretation ends the proof.
\end{proof}

Application of the above proposition and the equality \eqref{ad5} immediately gives the following corollary.

\begin{cor} \label{corort}
Let $U\in M_{2p+q}(\alg_h(p,q))$ be the unitary matrix `generating' $\alg_h(p,q)$.   Then
\[ U_{y,z} U_{x,z}^* = U_{z,y}^* U_{z,x} = U_{y,z}^* U_{x,z} = U_{z,y} U_{z,x}^*= 0, \;\;\;\;z,y,x \in \Jnd_{p,q}, x \neq y.\]
\end{cor}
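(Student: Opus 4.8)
The plan is to apply Proposition \ref{ort0} directly, since the generating matrix $U \in M_{2p+q}(\alg_h(p,q))$ is by Definition \ref{Hpq} a unitary whose entries are partial isometries, and then to use relation \eqref{ad5} to convert the two identities supplied by the proposition into the four identities asserted in the corollary.

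First I would note that Proposition \ref{ort0}, applied with $n = 2p+q$ and $\alg = \alg_h(p,q)$, immediately gives that for all distinct $x,y \in \Jnd_{p,q}$ and any $z \in \Jnd_{p,q}$ we have $U_{y,z} U_{x,z}^* = 0$ and $U_{z,y}^* U_{z,x} = 0$. This already establishes the first two of the four claimed equalities, so only the remaining two, namely $U_{y,z}^* U_{x,z} = 0$ and $U_{z,y} U_{z,x}^* = 0$, require argument.

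For these I would invoke the summarizing relation \eqref{ad5}, which states $U_{w,v} = U_{\bar{w},\bar{v}}^*$ for all $w,v \in \Jnd_{p,q}$; equivalently $U_{w,v}^* = U_{\bar{w},\bar{v}}$. To obtain $U_{y,z}^* U_{x,z} = 0$, I would rewrite the factors using \eqref{ad5}: $U_{y,z}^* = U_{\bar{y},\bar{z}}$ and $U_{x,z} = U_{\bar{x},\bar{z}}^*$, so that
\[ U_{y,z}^* U_{x,z} = U_{\bar y, \bar z} \, U_{\bar x, \bar z}^*. \]
Since the bar map is an involution, $x \neq y$ forces $\bar x \neq \bar y$, and the right-hand side is of the form $U_{\bar y, \bar z} U_{\bar x, \bar z}^*$ with $\bar x \neq \bar y$ sharing the common column index $\bar z$; this is exactly the first vanishing identity of Proposition \ref{ort0} (with the roles of the indices relabelled), hence it equals $0$. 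The fourth equality $U_{z,y} U_{z,x}^* = 0$ is handled symmetrically: writing $U_{z,y} = U_{\bar z, \bar y}^*$ and $U_{z,x}^* = U_{\bar z, \bar x}$ via \eqref{ad5} yields $U_{z,y} U_{z,x}^* = U_{\bar z, \bar y}^* \, U_{\bar z, \bar x}$, which with $\bar x \neq \bar y$ is the second vanishing identity of Proposition \ref{ort0} and so equals $0$.

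I do not expect any serious obstacle here; the content of the corollary is genuinely a two-line deduction once Proposition \ref{ort0} is in hand. The only point requiring a modicum of care is the bookkeeping of which of the two relations in Proposition \ref{ort0} matches each rewritten product, together with the observation that conjugation by the bar map preserves distinctness of indices (so that the hypothesis $x \neq y$ survives as $\bar x \neq \bar y$). This is precisely the role played by \eqref{ad5} as flagged in the sentence preceding the corollary.
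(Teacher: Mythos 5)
Your proposal is correct and follows exactly the paper's own (one-line) argument: the paper derives the corollary by "application of the above proposition and the equality \eqref{ad5}," which is precisely your combination of Proposition \ref{ort0} with the bar-relabelling via \eqref{ad5}. You have merely spelled out the bookkeeping that the paper leaves implicit.
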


The last observations lead to the following theorem.

\begin{tw}
The algebra $A_h(p,q)$ is the algebra of continuous functions on a compact quantum group, denoted further $H^+(p,q)$. The unitary
$U=(U_{z,y})_{ z,y \in \Jnd_{p,q}} \in M_{2p+q} \ot A_h(p,q)$ is the fundamental representation of $H^+(p,q)$. The quantum
group $H^+(0,q)$ is the quantum hyperoctahedral group $H^+_q$ studied in \cite{hyperoct}.
\end{tw}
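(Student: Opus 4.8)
The plan is to verify the two defining axioms of a compact quantum group for $A_h(p,q)$, namely the existence of a coassociative coproduct and the density (cancellation) conditions, and then to identify $H^+(0,q)$ with $H^+_q$. First I would construct the coproduct. The natural candidate is the $^*$-homomorphism $\Com:A_h(p,q)\to A_h(p,q)\ot A_h(p,q)$ determined on generators by the corepresentation formula $\Com(U_{z,y})=\sum_{x\in\Jnd_{p,q}}U_{z,x}\ot U_{x,y}$. To see this is well-defined by universality, I must check that the matrix $\widetilde U:=(\widetilde U_{z,y})$ with entries $\widetilde U_{z,y}:=\sum_x U_{z,x}\ot U_{x,y}$ again satisfies all the defining relations of $A_h(p,q)$ inside $A_h(p,q)\ot A_h(p,q)$: that $\widetilde U$ is unitary, that its entries are partial isometries, and that they satisfy the adjoint relations \eqref{ad5}. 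Unitarity of $\widetilde U$ is standard (it is the product structure of a tensor-square of a unitary). The adjoint relations are exactly the content flagged in the proof of Theorem \ref{thSpq}: since $(U_{z,x})^*=U_{\bar z,\bar x}$ by \eqref{ad5}, one computes $\widetilde U_{z,y}^*=\sum_x U_{x,y}^*\ot U_{z,x}^*=\sum_x U_{\bar x,\bar y}\ot U_{\bar z,\bar x}=\widetilde U_{\bar z,\bar y}$, reindexing $x\mapsto\bar x$, so \eqref{ad5} is preserved.

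The key step, which I expect to be the main obstacle, is showing that each $\widetilde U_{z,y}$ is again a partial isometry, i.e.\ $\widetilde U_{z,y}=\widetilde U_{z,y}\widetilde U_{z,y}^*\widetilde U_{z,y}$. This does not follow formally from the partial-isometry property of a single tensor factor, so here I would bring in Corollary \ref{corort}: the extra orthogonality relations $U_{z,x}^*U_{z,x'}=0$ and $U_{x,y}U_{x',y}^*=0$ for $x\neq x'$ make the cross terms in the expansion of $\widetilde U_{z,y}\widetilde U_{z,y}^*\widetilde U_{z,y}$ vanish, collapsing the triple sum to $\sum_x U_{z,x}U_{z,x}^*U_{z,x}\ot U_{x,y}U_{x,y}^*U_{x,y}=\sum_x U_{z,x}\ot U_{x,y}=\widetilde U_{z,y}$, using that each factor is a partial isometry. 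This is precisely the analogue of the computation in Theorem \ref{thSpq} showing that a sum of this form is a projection in the symmetric case; the partial-isometry version is exactly why Corollary \ref{corort} was isolated just before the theorem. Coassociativity of $\Com$ is then the usual formal check on generators.

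Next I would verify the density conditions $\overline{\Com(A_h(p,q))(1\ot A_h(p,q))}=\overline{\Com(A_h(p,q))(A_h(p,q)\ot 1)}=A_h(p,q)\ot A_h(p,q)$. Rather than check these directly, the cleanest route is to invoke the general principle (used implicitly for $O^+(p,q)$, $B^+(p,q)$ and $S^+(p,q)$) that a universal $C^*$-algebra generated by the entries of a unitary matrix $U$ satisfying $\Com(U_{z,y})=\sum_x U_{z,x}\ot U_{x,y}$, whose dual also carries a compatible structure, is automatically a Woronowicz $C^*$-algebra; concretely one exhibits the counit $\epsilon(U_{z,y})=\delta_{z,y}$ and the antipode $\kappa(U_{z,y})=U_{z,y}^*=U_{\bar z,\bar y}$ on $R(H^+(p,q))$, noting that the target values are legitimate (partial isometries mapping to scalars $\delta_{z,y}$, and adjoints of partial isometries are again partial isometries satisfying the same relations, so $\kappa$ is well-defined by universality). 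Existence of a densely defined counit and antipode yields the density conditions by the standard argument.

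Finally, the identification $H^+(0,q)\approx H^+_q$ is immediate: when $p=0$ the index set $\Jnd_{0,q}$ reduces to $\{1,\ldots,q\}$, the bar operation is trivial, relations \eqref{ad1}--\eqref{ad3} are vacuous, and \eqref{ad4} becomes $U_{M,N}^*=U_{M,N}$, so the entries are \emph{self-adjoint} partial isometries, hence projections. Together with the orthogonality of Corollary \ref{corort} this says $U$ is a cubic unitary in the sense of Definition \ref{unihyp}, so by universality $A_h(0,q)\approx A_h(q)$ with fundamental corepresentations matching, whence $H^+(0,q)\approx H^+_q$.
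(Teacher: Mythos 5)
Your construction of the coproduct is exactly the paper's intended argument: the paper handles the first statement by referring back to the proof of Theorem \ref{thSpq}, and the substance of that reference is precisely your collapsing of the triple sum for $\widetilde U_{z,y}\widetilde U_{z,y}^*\widetilde U_{z,y}$ via the orthogonality relations of Corollary \ref{corort}; the counit/antipode remarks also match what the paper does.

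However, your last step contains a genuine error. From \eqref{ad4} the entries of the fundamental unitary of $H^+(0,q)$ are self-adjoint partial isometries, but a self-adjoint partial isometry is \emph{not} a projection: the relations $x=xx^*x=x^3$ and $x=x^*$ force only $\sigma(x)\subset\{-1,0,1\}$, i.e.\ $x$ is a partial symmetry, a difference of two orthogonal projections (think of the entry $-1$ of a signed permutation matrix). If your claim were correct, the fundamental unitary would be a magic unitary and you would have identified $A_h(0,q)$ with $A_s(q)$, i.e.\ proved $H^+(0,q)\approx S^+_q$ --- which contradicts the very statement being proved, since $S^+_q$ is a proper quantum subgroup of $H^+_q$. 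The confusion seems to come from Definition \ref{unihyp}: a cubic unitary has \emph{self-adjoint} entries with vanishing products of distinct entries in the same row or column; its entries are not required to be projections. The correct argument is: self-adjointness together with Proposition \ref{ort0} (or Corollary \ref{corort}) shows that a unitary with self-adjoint partial-isometry entries is cubic; conversely --- a direction your proposal omits but which is needed for the isomorphism by universality --- the entries of a cubic unitary automatically satisfy $u_{ij}=u_{ij}^3=u_{ij}^*$ (this is the fact the paper quotes from \cite{hyperoct}), hence are self-adjoint partial isometries. The two universal problems therefore coincide and $A_h(0,q)\approx A_h(q)$.
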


\begin{proof}
The first statement can be deduced as in the proof of Theorem \ref{Spq}, using the fact that a $^*$-homomorphic image (and the
adjoint) of a partial isometry is a partial isometry.  It remains to observe that the algebra $A_h(0,q)$ is isomorphic to
$A_h(q)$. But this is a natural consequence of Corollary \ref{corort} and the fact that if a unitary is cubic then its entries satisfy
the condition $u_{ij} = u_{ij}^3 =u_{ij}^*$ (easy to show and noted implicitly in \cite{hyperoct}).
\end{proof}

As stated in the introduction, the quantum group $H^+(p,0)$ is in fact the quantum isometry group of the dual of the free group
$\mathbb{F}_p$ (\cite{groupdual}). Let us quickly recall the general notion of a quantum isometry group of the dual of a finitely
generated discrete group.

Let $\Gamma$ be a finitely generated discrete group with a fixed finite symmetric set of generators $S \subset \Gamma$. The
choice of a generating set $S$ determines a word-length function $l$ on $\Gamma$. Denote the universal group $C^*$-algebra of
$\Gamma$ by $C(\widehat{\Gamma})$ and let the group ring $\bc[\Gamma] \subset C(\widehat{\Gamma})$ be denoted by
$R(\widehat{\Gamma})$. Then the multiplication by the length function defines an operator $\hat{D}:R(\widehat{\Gamma}) \to
R(\widehat{\Gamma})$,
\[ \hat{D}(\lambda_{\gamma}) = l(\gamma) \lambda_{\gamma}, \;\;\; \gamma \in \Gamma.\]
We say that a quantum group $\QG$ \emph{acts on the dual of $\Gamma$ by orientation preserving isometries} if there exists a
unital $^*$-homomorphism $\alpha: C(\widehat{\Gamma}) \to C(\widehat{\Gamma}) \ot C(\QG)$ such that
\[ (\alpha \ot \id_{C(\QG)}) \alpha = (\id_{C(\widehat{\Gamma})} \ot \Com) \alpha\]
and moreover $\alpha$ restricts to a unital $^*$-homomorphism $\alpha_0: R(\hat{\Gamma}) \to R(\hat{\Gamma}) \odot R(\QG)$ satisfying
the commutation relation
\begin{equation} \label{isom} \alpha_0 \hat{D} = (\hat{D} \ot \id_{R(\QG)}) \alpha_0\end{equation}
and preserving the canonical trace on $R(\widehat{\Gamma})$.  For the motivation behind this definition and connections with
spectral triples and noncommutative manifolds we refer to \cite{groupdual} and references therein.

\begin{tw}[\cite{groupdual}]
Let $\Gamma$ be a discrete group with a fixed finite symmetric set of generators $S$. The category of all compact quantum groups
acting on $\widehat{\Gamma}$ by orientation preserving isometries admits a universal (initial) object, denoted further by
$QISO^+(\widehat{\Gamma}, S)$ and called the quantum group of orientation preserving isometries of $\widehat{\Gamma}$.
\end{tw}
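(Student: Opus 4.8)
The plan is to reconstruct the universal object explicitly from the eigenspace decomposition of $\hat{D}$ and then verify its universal property. Since $\Gamma$ is finitely generated and $S$ is finite, the length function $l$ takes values in $\bn_0$ and each eigenspace $V_n := \mathrm{span}\{\lambda_\gamma : l(\gamma)=n\}\subset R(\widehat{\Gamma})$ is finite-dimensional; in particular $V_1=\mathrm{span}\{\lambda_s : s\in S\}$. First I would observe that the commutation relation \eqref{isom} forces any admissible $\alpha_0$ to preserve each eigenspace $V_n$, so that in particular $\alpha_0(V_1)\subseteq V_1\odot R(\QG)$. Writing $\alpha_0(\lambda_s)=\sum_{t\in S}\lambda_t\ot q_{t,s}$ then produces a matrix $Q=(q_{t,s})_{s,t\in S}$ with entries in $R(\QG)$, and because $S$ generates $\Gamma$ the entire action $\alpha$ is determined by $Q$.

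Next I would translate the defining properties of $\alpha$ into algebraic relations among the entries $q_{t,s}$. The coaction identity $(\id\ot\Com)\alpha=(\alpha\ot\id)\alpha$ forces $\Com(q_{t,s})=\sum_{r\in S} q_{t,r}\ot q_{r,s}$, so $Q$ is a candidate fundamental corepresentation. The requirement that $\alpha$ be a unital $*$-homomorphism preserving the trace then yields three families of relations: (i) unitarity-type relations expressing that $Q$ implements an isometric corepresentation on the finite-dimensional inner-product space $V_1$ (with inner product supplied by the trace), which in particular bounds $\|q_{t,s}\|\le 1$ in every compatible representation; (ii) the $*$-relations $q_{t,s}^*=q_{t^{-1},s^{-1}}$ coming from $\lambda_s^*=\lambda_{s^{-1}}$; and (iii) the relations encoding compatibility of $\alpha$ with the products $\lambda_s\lambda_{s'}$, which---once one checks that $\alpha$ is automatically multiplicative on all of $R(\widehat{\Gamma})$ as soon as it is so on generators---reduce to a finite list of quadratic relations among the $q_{t,s}$.

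I would then let $\mathsf{A}_u$ be the universal $C^*$-algebra generated by symbols $\{q_{t,s}:s,t\in S\}$ subject to exactly the relations (i)--(iii), the boundedness built into (i) guaranteeing that this universal object exists and is nonzero. Equipping $\mathsf{A}_u$ with the $*$-homomorphism determined by $\Com(q_{t,s})=\sum_{r} q_{t,r}\ot q_{r,s}$, one checks coassociativity and the density (cancellation) conditions from the definition of a compact quantum group, so that $\mathsf{A}_u=C(\QG_u)$ for a compact matrix quantum group $\QG_u$; the formula $\alpha_u(\lambda_s)=\sum_t\lambda_t\ot q_{t,s}$ then extends, using multiplicativity and the relations, to a genuine orientation-preserving isometric action of $\QG_u$ on $\widehat{\Gamma}$.

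Finally, universality follows by construction: given any compact quantum group $\QG$ acting by orientation-preserving isometries with associated matrix $\widetilde{Q}=(\widetilde{q}_{t,s})$, the entries $\widetilde{q}_{t,s}$ satisfy the relations (i)--(iii), so the universal property of $\mathsf{A}_u$ yields a unique unital $*$-homomorphism $\pi:\mathsf{A}_u\to C(\QG)$ with $\pi(q_{t,s})=\widetilde{q}_{t,s}$, and this $\pi$ intertwines the coproducts and the actions, giving the required unique morphism. The main obstacle I expect lies in step (i)--(iii): isolating a complete and self-consistent finite set of relations and, crucially, verifying the boundedness needed for the universal $C^*$-completion to exist and to carry $\Com$. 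This amounts to showing that the relations force the $q_{t,s}$ to be coefficients of a \emph{unitary} corepresentation on $V_1$, which is precisely where the trace-preservation hypothesis does the essential work.
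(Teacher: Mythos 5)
First, a point of reference: the paper does not prove this theorem at all --- it is quoted verbatim from \cite{groupdual}, so there is no internal proof to compare against. Your strategy (restrict the coaction to the finite-dimensional eigenspace $V_1$ of $\hat{D}$, extract a matrix $Q=(q_{t,s})$, and build the universal object as a universal $C^*$-algebra on these generators) is indeed the natural one and is close in spirit to how \cite{groupdual} identifies $QISO^+(\widehat{\Gamma},S)$ concretely as a compact matrix quantum group with fundamental representation indexed by $S$.

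There is, however, a genuine gap in your step (iii). You assert that the compatibility of $\alpha$ with multiplication ``reduces to a finite list of quadratic relations'' once one checks that multiplicativity on generators propagates to all of $R(\widehat{\Gamma})$. This propagation is not automatic and the reduction to quadratic relations fails for general $\Gamma$. Two separate families of higher-order constraints are being suppressed. First, well-definedness: whenever two words $s_1\cdots s_k$ and $s_1'\cdots s_m'$ in $S$ represent the same element of $\Gamma$, the corresponding products $\sum q_{t_1,s_1}\cdots q_{t_k,s_k}$ and $\sum q_{t_1',s_1'}\cdots q_{t_m',s_m'}$ (summed over tuples with fixed product) must agree; for a group that is not free, and in particular not finitely presented, this is an a priori infinite family of relations of unbounded degree (already for $\bz_n$ one needs the degree-$n$ relation coming from $\lambda_s^n=1$). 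Second, the commutation with $\hat{D}$ does not only force $\alpha_0(V_1)\subseteq V_1\odot R(\QG)$: it forces $\alpha_0(V_n)\subseteq V_n\odot R(\QG)$ for every $n$, and since $\alpha_0(\lambda_{s_1\cdots s_n})=\sum_{t_1,\ldots,t_n}\lambda_{t_1\cdots t_n}\ot q_{t_1,s_1}\cdots q_{t_n,s_n}$, this yields, for each $n$, vanishing conditions on sums of degree-$n$ products $q_{t_1,s_1}\cdots q_{t_n,s_n}$ over tuples whose product $t_1\cdots t_n$ has length different from $n$. These are not consequences of the $n=2$ case in general (indeed, even for $\mathbb{F}_p$ the paper invokes ``a combinatorial argument'' to show that the higher-level conditions add nothing beyond the partial-isometry relations --- that is a nontrivial verification, not a formality). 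The repair is straightforward in principle: define $\mathsf{A}_u$ by the full, possibly infinite, family of relations indexed by all $n$; the universal $C^*$-algebra still exists because your relations (i) give the uniform bound $\|q_{t,s}\|\le 1$. But as written, your construction could produce an algebra that is too large, whose canonical coaction either fails to be well defined on $\bc[\Gamma]$ or fails to commute with $\hat{D}$ beyond the first eigenspace, and then the ``universal object'' you build would not itself be an object of the category.
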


When the choice of the generating set is clear, we write simply $QISO^+(\widehat{\Gamma})$. In particular if $\Gamma=\mathbb{F}_p$, the free
group on $p$ generators $x_1, \ldots, x_p$, we use the generating set $S=\{x_1, x_1^{-1}, \ldots, x_p, x_p^{-1}\}$. The following result is
essentially a rephrasing  of Theorem 5.1 of \cite{groupdual}; for the convenience of the reader we sketch the proof.

\begin{tw}
The quantum group $H^+(p,0)$ is isomorphic to the quantum group of orientation preserving isometries of
$\widehat{\mathbb{F}_p}$.
\end{tw}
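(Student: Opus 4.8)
The plan is to establish the isomorphism $H^+(p,0) \approx \tu{QISO}^+(\widehat{\mathbb{F}_p})$ by identifying both quantum groups through their universal properties, realised concretely in terms of the fundamental corepresentation of $H^+(p,0)$. First I would make explicit what the action of $\tu{QISO}^+(\widehat{\mathbb{F}_p})$ must look like. The group ring $R(\widehat{\mathbb{F}_p})$ is generated by the unitaries $\lambda_{x_1},\ldots,\lambda_{x_p}$ together with their inverses $\lambda_{x_\alpha}^{-1} = \lambda_{x_\alpha}^*$, and the length function $\hat{D}$ assigns length $1$ exactly to the generators $\lambda_{x_\alpha}$ and $\lambda_{x_\alpha}^{-1}$. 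A unital $^*$-homomorphism $\alpha_0\colon R(\widehat{\mathbb{F}_p}) \to R(\widehat{\mathbb{F}_p}) \odot R(\QG)$ commuting with $\hat{D}$ (in the sense of \eqref{isom}) must therefore send each length-one element into the linear span of the length-one elements tensored with $R(\QG)$; that is, writing the $2p$ basis elements $\lambda_{x_\alpha}^{\pm 1}$ in the pattern indexed by $\Jnd_{p}$, we must have $\alpha_0(\lambda_{x_\alpha}) = \sum_{z} \lambda_z \ot U_{z, 1\alpha}$ for some coefficients $U_{z,1\alpha} \in R(\QG)$, and similarly for the inverses.

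The core of the argument is then to translate the three remaining requirements on $\alpha$ — that it be a $^*$-homomorphism, that the resulting matrix implement a corepresentation (which follows from coassociativity of $\alpha$), and that $\alpha_0$ preserve the canonical trace — into exactly the defining relations of $A_h(p,0)$. Preservation of the trace, combined with the fact that $\alpha$ maps each generator to a unitary, forces the matrix $U = (U_{z,y})_{z,y\in\Jnd_p}$ to be unitary (this is the standard orthogonality/Parseval computation). The $^*$-structure on $R(\widehat{\mathbb{F}_p})$, under which $\lambda_{x_\alpha}^* = \lambda_{x_\alpha}^{-1}$ — i.e.\ the basis vector $e_{1\alpha}$ is paired with $e_{0\alpha} = e_{\bar{1}\alpha}$ — forces precisely the adjoint relation \eqref{ad5}, namely $U_{z,y} = U_{\bar z, \bar y}^*$, which is the defining relation of $A_o(p,0)$. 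Finally, the requirement that $\alpha_0$ be multiplicative, applied to the relation that each $\lambda_{x_\alpha}$ is a \emph{unitary} (so $\lambda_{x_\alpha}\lambda_{x_\alpha}^* = \lambda_{x_\alpha}^*\lambda_{x_\alpha} = 1$), forces each entry $U_{z,y}$ to be a partial isometry: evaluating $\alpha_0(\lambda_{x_\alpha}\lambda_{x_\alpha}^{-1})$ and matching coefficients of the trivial group element yields $U_{z,y} U_{z,y}^* U_{z,y} = U_{z,y}$ after using the automatic orthogonality of Corollary \ref{corort}. This shows that the universal object receiving all such actions is generated by a unitary whose entries are partial isometries satisfying \eqref{ad5}, which is exactly $A_h(p,0)$ by Definition \ref{Hpq}.

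To finish, I would verify the converse direction: that the fundamental corepresentation $U$ of $H^+(p,0)$ does define a genuine orientation-preserving isometric action on $\widehat{\mathbb{F}_p}$, so that $H^+(p,0)$ is indeed an object of the relevant category and not merely a candidate. Concretely, one defines $\alpha_0(\lambda_{x_\alpha}) = \sum_z \lambda_z \ot U_{z,1\alpha}$ and checks that this extends to a well-defined unital $^*$-homomorphism on $R(\widehat{\mathbb{F}_p})$, that it is coassociative (immediate from $U$ being a corepresentation), that it commutes with $\hat{D}$ (immediate from the length-one entries mapping among length-one elements), and that it preserves the trace (from unitarity of $U$). Since $H^+(p,0)$ both receives a map from every object of the category and is itself such an object, it is the universal one, giving the asserted isomorphism $H^+(p,0) \approx \tu{QISO}^+(\widehat{\mathbb{F}_p})$.

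The main obstacle I anticipate is the bookkeeping in the multiplicativity step: one must check that the partial-isometry relation and no extraneous relations emerge when matching coefficients, and in particular that the well-definedness of $\alpha_0$ (respecting all defining relations of $R(\widehat{\mathbb{F}_p})$, which as a free group ring has none beyond the group axioms) does not impose additional constraints on the $U_{z,y}$. Making this precise amounts to showing that the partial-isometry condition is both necessary and sufficient, which is where the automatic orthogonality relations of Corollary \ref{corort} do the essential work; establishing that these are genuinely the only relations forced is the crux of the proof.
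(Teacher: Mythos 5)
Your proposal follows essentially the same route as the paper's (sketched) proof: extract the matrix $U$ from the degree-one part of the action using length preservation, get unitarity and the relations \eqref{ad1} from the trace-preservation and $^*$-homomorphism properties, and obtain the partial isometry condition from multiplicativity applied to $\lambda_{x_\alpha}\lambda_{x_\alpha}^{-1}=1$; like you, the paper defers the ``no additional relations are forced'' point to a combinatorial argument (in \cite{groupdual}), which is exactly the step you identify as the crux.

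However, one step is circular as written: you invoke the ``automatic orthogonality'' of Corollary \ref{corort} in order to deduce $U_{z,y}U_{z,y}^*U_{z,y}=U_{z,y}$, but that corollary (via Proposition \ref{ort0}) \emph{presupposes} that the entries of $U$ are partial isometries, which is precisely what you are trying to establish at that point. The orthogonality you actually need, namely $U_{z,y}U_{x,y}^*=0$ for $x\neq z$, has to be read off directly from the multiplicativity relation $\alpha_0(\lambda_t)\alpha_0(\lambda_{t^{-1}})=1\ot 1$: expanding gives $\sum_{s,s'}\lambda_{ss'}\ot U_{s,t}U_{s',t^{-1}}$, and since distinct reduced words of length two in $\mathbb{F}_p$ give linearly independent elements of the group ring (this is where freeness enters), the coefficient of every $\lambda_{ss'}$ with $ss'\neq e$ must vanish; together with \eqref{ad1} this is exactly $U_{z,y}U_{x,y}^*=0$ for $x\neq z$, and multiplying the column relation $\sum_{x\in\Jnd_p} U_{x,y}^*U_{x,y}=1$ on the left by $U_{z,y}$ then yields the partial isometry condition. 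With that substitution your argument coincides with the paper's.
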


\begin{proof}
Discussion after Theorem 2.6 of \cite{groupdual} implies that $\QG:=QISO^+(\mathbb{F}_p)$ is a compact matrix quantum group with
a fundamental representation determined by a unitary $U = (U_{t,s})_{t,s \in S} \in M_{2p} (C(\QG))$ such that the map
\[ \alpha_0 (\lambda_t) = \sum_{s \in S} \lambda_s \ot U_{s,t}, \;\; t \in S,\]
extends to a $^*$-homomorphism $\alpha_0: R(\widehat{\mathbb{F}_p}) \to R(\widehat{\mathbb{F}_p}) \odot R(\QG)$ satisfying
\eqref{isom}. Relabel generators in $S$ so that $S=\{x_{i\alpha}, i\alpha \in \Jnd_p\}$ and $x_{i\alpha} = x_{\ib \alpha}^{-1}$
for each $i\alpha \in \Jnd_p$. Then the fact that $\alpha_0$ is a $^*$-map implies that the entries of $U$ satisfy the relations
\eqref{ad1} (where we write $U_{i\alpha, j \beta}:= U_{x_{i\alpha}, x_{j\beta}}$). Further unitality of $\alpha_0$ (specifically
the conditions $\alpha_0(x_t) \alpha_0(x_{t^{-1}}) = 1_{C(\widehat{\mathbb{F}_p})} \ot 1_{\QG}$ for each $t \in S$) together with
unitarity of $U$ imply that each $U_{i\alpha}$ is a partial isometry. Finally a combinatorial argument shows that no additional relations are implied by the fact that
the $^*$-homomorphism $\alpha_0$ satisfies \eqref{isom}; hence the universal properties defining the standard
fundamental representations of $\QG$ and $H^+(p,0)$ coincide.
\end{proof}

Finally we describe $A_h(p,q)$ in terms of the intertwiners between the tensor powers of the fundamental corepresentation.

\begin{prop} \label{HomHpq}
The algebra $A_h(p,q)$ is the universal $C^*$-algebra generated by the entries of a unitary $2p+q$ by $2p+q$ matrix $U$ such that the
vector $\xi$ defined in Proposition \ref{HomOpq} is a fixed vector for $U^{\ot 2}$ and the map $e_{i\alpha} \to e_{i\alpha} \ot
e_{\bar{i}\alpha}\ot e_{i\alpha}$, $e_M \to e_{M} \ot e_{M} \ot e_M$ defines a morphism in $\Hom (U; U^{\ot 3})$.
\end{prop}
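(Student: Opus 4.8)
The plan is to mirror the structure of the proof of Proposition \ref{HomSpq}, replacing the condition ``$U$ is a magic unitary'' by ``the entries of $U$ are partial isometries'' (Definition \ref{Hpq}) and correspondingly the intertwiner condition in $\Hom(U;U^{\ot 2})$ by the cubic-type condition in $\Hom(U;U^{\ot 3})$. First I would invoke Proposition \ref{HomOpq}: assuming $\xi$ is fixed by $U^{\ot 2}$ is exactly equivalent to the adjoint relations \eqref{ad1}--\eqref{ad4}, so throughout I may use \eqref{ad5}, namely $U_{z,y}=U_{\bar z,\bar y}^*$. It therefore remains to show that, given these relations, the stated map $W\colon e_{i\alpha}\mapsto e_{i\alpha}\ot e_{\bar i\alpha}\ot e_{i\alpha}$, $e_M\mapsto e_M\ot e_M\ot e_M$ lies in $\Hom(U;U^{\ot 3})$ if and only if every entry of $U$ is a partial isometry.

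Next I would write out the intertwiner condition $U^{\ot 3} W = W U$ explicitly in coordinates, exactly as in Proposition \ref{HomSpq} but with one extra tensor leg. Comparing coefficients of the basis vectors $e_z\ot e_y\ot e_w$ and then, for a fixed target index, multiplying on the right by $U_{w,x}^*$ and summing over $w$ using unitarity, the condition should collapse (after also using \eqref{ad5} to rewrite the middle leg) to the family of relations
\[
U_{z,x}\,U_{z,x}^*\,U_{z,x}=U_{z,x},\qquad z,x\in\Jnd_{p,q},
\]
i.e.\ precisely the statement that each generator is a partial isometry. I expect the bookkeeping to split into the usual four cases according to whether the indices lie in the $p$-part or the $q$-part, exactly as in the four displayed equations in the proof of Proposition \ref{HomSpq}; each case reduces to the partial-isometry identity for the relevant entry.

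For the converse direction I would argue that if the entries of $U$ are partial isometries, then Corollary \ref{corort} supplies the needed orthogonality relations ($U_{y,z}U_{x,z}^*=U_{z,y}^*U_{z,x}=U_{y,z}^*U_{x,z}=U_{z,y}U_{z,x}^*=0$ for $x\neq y$), and together with the partial-isometry identity $U_{z,x}U_{z,x}^*U_{z,x}=U_{z,x}$ these are exactly what is required to verify that $W$ is an intertwiner, so the two universal descriptions define the same algebra. The main obstacle, as in Proposition \ref{HomSpq}, is purely organisational rather than conceptual: one must handle the extra tensor factor carefully so that the middle leg $e_{\bar i\alpha}$ (versus $e_{i\alpha}$ on the outer legs) is correctly matched, and one must take care that the relations $U_{z,x}U_{z,x}^*U_{z,x}=U_{z,x}$ obtained from the intertwiner condition genuinely exhaust the partial-isometry requirement for \emph{all} indices (including the $q$-part, where \eqref{ad4} gives selfadjointness and the identity reduces to $U_{M,N}^3=U_{M,N}$), so that no spurious extra relations are imposed.
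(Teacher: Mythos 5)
Your proposal is correct and follows essentially the same route as the paper: reduce via Proposition \ref{HomOpq} to the relations \eqref{ad1}--\eqref{ad4}, write out the intertwiner condition for the map into $U^{\ot 3}$ in coordinates to obtain the four families of triple-product relations, and observe that together with \eqref{ad1}--\eqref{ad4} these are equivalent to each entry being a partial isometry. The paper leaves the converse implication implicit, whereas you correctly flag that it rests on the orthogonality relations of Corollary \ref{corort}; this is a welcome clarification rather than a deviation.
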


\begin{proof}
The proof is very similar to that of Proposition \ref{HomSpq}. The difference lies in the fact that we obtain conditions of the type ($i\alpha, j \beta \in \Jnd_p, M,N \in \{1, \ldots, q\}$, $y,x \in \Jnd_{p+q}$)
\[ U_{j\beta, i \alpha} U_{y, \ib \alpha} U_{x, i \alpha} = \delta_y^{\overline{j} \beta} \delta_x^{j \beta} U_{j\beta, i \alpha},\]
\[ U_{N, i \alpha} U_{y, \ib \alpha} U_{x, i \alpha} = \delta_y^{N}  \delta_x^{N} U_{N, i \alpha},\]
\[ U_{j\beta, M} U_{y, M} U_{x, M} = \delta_y^{\overline{j} \beta} \delta_x^{j \beta} U_{j\beta, M},\]
\[ U_{N, M} U_{y, M} U_{x,M} = \delta_y^{N} \delta_x^{N}  U_{N, M}.\]
These together with relations \eqref{ad1}-\eqref{ad4} imply that all $U_{z,y}$ ($z,y \in \Jnd_{p,q}$) are partial isometries. 
\end{proof}

\section{Categories of representations via partitions} \label{coreps-partitions}

In this section we describe the categories of representations of the quantum groups we are considering in this paper in terms
of (marked) partitions.  Let $P(k,l)$ ($k, l \in \bn_0$) be the set of partitions between $k$ upper points and $l$ lower points.
Given $\pi\in P(k,l)$ and two multi-indices $i=(i_1,\ldots,i_k)\in \Jnd_{p,q}^k$ and $j=(j_1,\ldots,j_l)\in \Jnd_{p,q}^l$, we define a number
$\delta_\pi(^i_j)\in\{0,1\}$ in the following way:  first place the indices $(i_1,\ldots,i_k)$ and $(j_1,\ldots,j_l)$ respectively on the upper and lower points and then put $\delta_\pi(^i_j)=1$ if in any block of $\pi$ the upper and lower sequences
of indices in $\Jnd_{p,q}$, say $(x_1,\ldots,x_r)\in \Jnd_{p,q}^r$ and $(y_1,\ldots,y_s)\in \Jnd_{p,q}^s$, satisfy $x_1=\bar{x}_2=x_3=\bar{x}_4=\ldots$, $
y_1=\bar{y}_2=y_3=\bar{y}_4=\ldots$, $x_1=y_1$, and $\delta_\pi(^i_j)=0$ otherwise. Thus $\delta_\pi(^i_j)=1$ if and only if indices in each block of the partition $\pi$ have the following pattern:
\[ \xymatrix@R=0.5pc @C=0.5pc{  \;\;\; \ar@{-}[drr] x & \;\;\;\bar{x}\ar@{-}[dr]& x \ar@{-}[d]& \bar{x} \;\ar@{-}[dl] &x  \;\; \ar@{-}[dll]\\ & & *=0{}& & \\ & x \ar@{-}[ur]& \bar{x} \ar@{-}[u]& x \ar@{-}[ul] & } \]
Further consider the following operator in $B((\bc^{2p+q})^{\ot k};(\bc^{2p+q})^{\ot l})$:
\begin{equation}\label{Tpi}T_\pi(e_{i_1}\otimes\ldots\otimes e_{i_k})=\sum_{j_1,\ldots,j_l\in \Jnd_{p,q}}\delta_\pi(^i_j)\;e_{j_1}\otimes\ldots\otimes
e_{j_l}, \;\;\;\;\;\;\; (i_1, \ldots, i_k) \in \Jnd_{p,q}^k.\end{equation}

We denote by $P_2,P_{12},P_{even}\subset P$ respectively the pairings, the singletons and pairings, and the partitions having even blocks. Let
also $NC_x=NC\cap P_x$, for any $x\in\{.,2,12,\tu{even}\}$, where $NC\subset P$ denotes the set of all non-crossing partitions. It turns out that such collections of partitions, known to
correspond to representations of the orthogonal, symmetric, bistochastic and hyperoctahedral (quantum) groups
(\cite{TeoRoland}), can be also used to describe the categories in our two-parameter context. Indeed, one can check, similarly as
it was done in \cite{TeoRoland}, that for $P_{12}$ and $P_{\tu{even}}$ the usual operations of tensoring, concatenation (with
the appropriate multiplication factor added for deleted closed blocks), and turning the partition upside-down correspond to tensoring,
composing and passing to the adjoint on the level of the associated operators $T_{\pi}$ defined in \eqref{Tpi}. Note that such a
statement fails when we consider the whole category $P$ -- this explains why the case of $S^+(p,q)$, to be discussed later on, cannot be included in the
following theorem.

\begin{tw} \label{catOHB}
For $\QG=O^+(p,q)$ (respectively, $\QG=B^+(p,q),H^+(p,q)$) let $U$ denote the fundamental representation introduced earlier. Then for all $k,l \in \bn_0$
$$\Hom(U^{\otimes k};U^{\otimes l})=\tu{span}(T_\pi|\, \pi\in D(k,l)),$$
with $D=NC_2$ (respectively, $D=NC_{12},NC_{even}$).
\end{tw}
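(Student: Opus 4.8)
The plan is to apply the Tannaka–Krein duality for compact matrix quantum groups in the form developed by Woronowicz and adapted to the partition-combinatorics setting in \cite{TeoRoland}. The strategy rests on the key principle (recalled just before the theorem) that the assignment $\pi \mapsto T_\pi$ is, for $D = NC_2, NC_{12}, NC_{\tu{even}}$, a monoidal functor: concatenation of partitions goes to composition of the operators $T_\pi$ (up to scalar factors from deleted closed blocks), horizontal juxtaposition goes to tensor product, and reflecting a partition upside-down goes to taking adjoints. Consequently $\tu{span}(T_\pi \mid \pi \in D(k,l))$ automatically forms a concrete rigid tensor $C^*$-category, and by Woronowicz's reconstruction theorem there is a unique compact matrix quantum group $\QG_D$ whose intertwiner spaces are exactly these spans. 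The theorem then amounts to identifying $\QG_D$ with $O^+(p,q)$, $B^+(p,q)$, $H^+(p,q)$ respectively.

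The identification proceeds through the generating morphisms. First I would observe that each of the three categories is generated (as a tensor $C^*$-category under the operations above) by a small set of elementary partitions, and that $T_\pi$ for these generators coincides with the fixed-vector/intertwiner conditions already extracted in Propositions \ref{HomOpq}, \ref{HomSpq} and \ref{HomHpq}. Concretely, the single pair partition in $NC_2(0,2)$ yields exactly the vector $\xi$ of Proposition \ref{HomOpq}, so $NC_2$ is generated by $\xi \in \Hom(1;U^{\ot 2})$; adjoining the singleton partition (the generator of $NC_{12}$) produces the vector $\eta$ of Corollary \ref{HomBpq}; and the "double-leg" partition giving the map $e_{i\alpha}\mapsto e_{i\alpha}\ot e_{\ib\alpha}\ot e_{i\alpha}$, $e_M \mapsto e_M\ot e_M\ot e_M$ is precisely the generator producing the partial-isometry relations of Proposition \ref{HomHpq}. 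Thus in each case the span of $\{T_\pi : \pi \in D(k,l)\}$ is the smallest tensor category containing these generating morphisms.

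The proof then has two matching halves. For the inclusion $\tu{span}(T_\pi \mid \pi \in D(k,l)) \subseteq \Hom(U^{\ot k};U^{\ot l})$, I would note that the generating morphisms are intertwiners by the cited Propositions, and that $\Hom$-spaces are closed under tensor product, composition and adjoints; hence every $T_\pi$ with $\pi \in D$ is an intertwiner. For the reverse inclusion — the nontrivial direction — I would invoke Woronowicz's Tannakian theorem: the category generated by the stated morphisms determines a universal compact quantum group, and the universal $C^*$-algebra presentation of that quantum group is, by the Propositions, exactly the defining presentation of $A_o(p,q)$, $A_b(p,q)$, $A_h(p,q)$. Since $O^+(p,q)$, $B^+(p,q)$, $H^+(p,q)$ are defined by precisely these universal properties, they coincide with the Tannakian reconstructions, so their full intertwiner spaces are spanned by the $T_\pi$.

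The main obstacle I anticipate is verifying rigorously that the three collections $NC_2$, $NC_{12}$, $NC_{\tu{even}}$ are genuinely stable under the categorical operations \emph{together with} the correct bookkeeping of the scalar factors $(2p+q)^{(\ldots)}$ arising when concatenation closes off a loop, and — more delicately — confirming that the marked/bulleted pattern $x = \bar{x}_2 = x_3 = \ldots$ encoded in $\delta_\pi(^i_j)$ is preserved under composition. Because our indices carry the involution $z \mapsto \bar z$, closing a block must respect the alternation condition, and one must check that composing two admissible $T_\pi$'s never produces a stray crossing or a broken alternation pattern; this is the point where the two-parameter combinatorics genuinely differs from the one-parameter case of \cite{TeoRoland}, and where I would spend the bulk of the careful verification, most likely by reducing to the non-crossing structure and checking the generator relations block by block.
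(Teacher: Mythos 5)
Your proposal is correct and follows essentially the same route as the paper: show that $\pi\mapsto T_\pi$ respects the categorical operations (with the loop factor $T_\cup T_\cap=(2p+q)\cdot\id$), note that $NC_2$, $NC_{12}$, $NC_{\tu{even}}$ are generated by the cap, the cap plus singleton, and the cap plus four-block respectively, match these generators with the morphisms of Proposition \ref{HomOpq}, Corollary \ref{HomBpq} and Proposition \ref{HomHpq}, and conclude by Woronowicz's Tannaka--Krein duality. The only cosmetic difference is that the paper phrases the $O^+(p,q)$ case as a general statement about $A_o(F)$ for real symmetric $F$ with $F^2=1$, with $\delta_\pi$ written as a product of $F_{ab}$'s, which specializes to your alternation condition for $F=F_{p,q}$.
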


\begin{proof}

In the one-parameter case ($p=0$) the result was proved in \cite{TeoRoland}; it turns out that the methods used there can be easily adopted to our framework when we work with the quantum groups listed in the theorem.  Details will be provided only for
the case of $O^+(p,q)$, as the other two follow similarly.

In fact for $O^+(p,q)$ the result is a consequence of the general facts about $A_o(F)$. Indeed,  let $F\in M_n(\mathbb R)$ satisfy
$F=F^t$ and $F^2=1$, and consider the algebra $A_o(F)$. Our claim is that we have $\Hom(U^{\otimes k};U^{\otimes
l})=\tu{span}(T_\pi|\pi\in NC_2(k,l))$, where $T_\pi(e_{i_1}\otimes\ldots\otimes
e_{i_k})=\sum_{j_1,\ldots,j_l}\delta_\pi(^i_j)e_{j_1}\otimes\ldots\otimes e_{j_l}$, with $\delta_\pi(^i_j)\in\mathbb R$ being
given by:
$$\delta_\pi\begin{pmatrix}i_1&\ldots&i_k\\ j_1&\ldots&j_l\end{pmatrix}
=\prod_{(i_ri_s)\in\pi}F_{i_ri_s}\prod_{(j_rj_s)\in\pi}F_{j_rj_s}\prod_{(i_rj_s)\in\pi}\delta_{i_rj_s}$$ In other words,
$\delta_\pi$ is obtained by taking the product of $F_{ab}$'s, over all ``horizontal'' strings of $p$, and of $\delta_{ab}$'s,
over all ``vertical'' strings of $\pi$ (note that as we only consider here pair partitions, in fact only one delta factor will
appear for any given block of the partition). Observe further that in the case of the matrix $F=F_{p,q}$ used for defining
$O^+(p,q)$, we obtain the $\delta$ numbers in the statement; it therefore suffices to establish the claim.

The proof of the claim has two steps. First, we prove that $\pi\to T_\pi$ transforms the categorical operations for partitions
into the categorical operations for the linear maps. This is indeed clear for the tensor product and for the duality. Regarding
now the composition axiom, the only problem might come from the closed circles that appear in the middle:  but here we can use
the formula $T_\cup T_\cap=(2p+q)\cdot \id$, with $2p+q=\sum_{i,j \in \Jnd_{p,q}} F_{ij}$.

Summarizing, the spaces $\tu{span}(T_\pi|\pi\in NC_2(k,l))$ form a tensor category in the sense of Woronowicz. Now since this tensor
category is generated by $T_\cap$ (that is because $NC_2$ is generated by $\cap$) the corresponding Hopf algebra is the one
obtained by using the relation $T_\cap\in \tu{Fix}(U^{\otimes 2})$. And since $T_\cap=\sum F_{ij}e_i\otimes e_j$, this algebra is
$A_o(F)$, and the proof is finished.

Note in passing, that when  $F$ is not necessarily real, the result remains the same, but with the $\delta$ numbers obtained by making the product of
$F_{ab}$'s over ``oriented'' horizontal strings, and then the product of $\delta_{ab}$'s over vertical strings.

The cases of $B^+(p,q)$ and $H^+(p,q)$  follow in a similar way, first adding the singletons and then considering all partitions
with blocks of even size.
\end{proof}

\begin{rem}
The corresponding result holds also for the classical versions of $O^+(p,q), S^+(p,q)$ and $H^+(p,q)$ (see Section
\ref{classver}), with the non-crossing partitions replaced by all partitions; the proof follows in a similar way.
\end{rem}

For $S^+(p,q)$ the above proof does not work: as already mentioned above, the problem is that, since some of the blocks have an
odd number of entries, the composition axiom does not hold for the implementation provided by the operators $T_{\pi}$ described
in \eqref{Tpi}. We therefore need to provide a new combinatorial description of the category of representations. From Theorem
\ref{thSpq} we know that $S^+(p,q)\approx H^+_p \hat{\star} S^+_q$; hence we begin by considering separately the cases $p=0$ and
$q=0$. When $p=0$, the quantum group  $S^+(0,q)$ is a quantum symmetric group with the usual fundamental representation, so the
corresponding category is $NC$, with the standard implementation providing the isomorphism (there are no `conjugate' coordinates
to deal with), as shown in \cite{Teoold} or in \cite{TeoRoland}. The situation for $q=0$ is more complicated, as the defining
fundamental  representation of $S^+(p,0)$ corresponds rather to the `sudoku' representation of $H^+_p$ than to the
representation studied from the categorical point of view in \cite{hyperoct} and \cite{TeoRoland}. We begin by defining the
relevant category of `bulleted' partitions.

\begin{deft} \label{bulletcategory}
Let (for $k, l \in \bn_0$) $P_{\bullet} (k,l)$ denote the collection of all partitions between $k$ upper and $l$ lower points,
such that each point is marked either with a black or a white circle and we identify the partitions which differ by `mirrored
markings' in some of the blocks, so that for example ${\raisebox{10pt}{\xymatrix@R=0.5pc @C=0.5pc{  & *=0{\bullet} & & *=0{\bullet}\\ *=0{\bullet}
\ar@{-}[ur] & &
*=0{\circ} \ar@{-}[ul]& *=0{\bullet} \ar@{-}[u]}}}\,$ equals  ${\raisebox{10pt}{\xymatrix@R=0.5pc @C=0.5pc{  & *=0{\circ} & & *=0{\bullet}
\\ *=0{\circ} \ar@{-}[ur] & & *=0{\bullet} \ar@{-}[ul] & *=0{\bullet} \ar@{-}[u]}}}\,$ in $P_{\bullet}(2,3)$.
 The category $P_{\bullet}=\bigcup_{k,l\in \bn_0}
P_{\bullet}(k,l)$ is defined similarly to $P$, with the result of the concatenation operation being non-zero only if the
corresponding markings match, remembering that we are allowed to replace colours of markings in any block of any given partition.

 Denote by
$NC_{\bullet}$ the subcategory of all bulleted non-crossing partitions.
\end{deft}

Given $\pi\in P_{\bullet}(k,l)$ and multi-indices $i=(i_1,\ldots,i_k)\in \Jnd_p^k$, $j=(j_1,\ldots,j_l) \in \Jnd_p^l$ define a
number $\delta_\pi(^i_j)\in\{0,1\}$ in the following way: $\delta_\pi(^i_j)=1$ if for any pair of vertices in a fixed block of
the partition $\pi$ the corresponding indices are equal if the bullets on the vertices have the same colour and are `conjugate'
if the colours of the respective bullets are different. Define further $\wt{T}_{\pi}:(\bc^{2p})^{\ot k} \to (\bc^{2p})^{\ot l}$
by the usual formula $(i_1,\ldots,i_k \in \Jnd_p)$:
\begin{equation}\label{wTpi}\wt{T}_\pi(e_{i_1}\otimes\ldots\otimes
e_{i_k})=\sum_{j_1,\ldots,j_l\in \Jnd_p}\delta_\pi(^i_j)e_{j_1}\otimes\ldots\otimes e_{j_l}.\end{equation} We are ready to formulate the
result describing the representation theory of $S^+(p,0)$.

\begin{tw} \label{catSp}
Let $U$ denote the fundamental representation of $S^+(p,0)$ introduced in Definition \ref{Spq}.  We have for all $k,l \in \bn_0$
$$\Hom(U^{\otimes k}\;U^{\otimes l})=\tu{span}(\wt{T}_\pi|\,\pi\in NC_{\bullet}(k,l)).$$
\end{tw}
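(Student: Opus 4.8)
The plan is to follow the Tannaka--Krein strategy used in the proof of Theorem \ref{catOHB}, but adapted to the ``bulleted'' setting forced by the sudoku structure of the fundamental representation of $S^+(p,0)$. First I would verify that $\pi \mapsto \wt{T}_\pi$ turns the categorical operations on $NC_{\bullet}$ (tensor product, composition, and turning partitions upside-down) into the corresponding operations on intertwiners. Tensoring and duality are immediate from the definition \eqref{wTpi}, as usual; the only delicate point is the composition axiom, where one must check that the bookkeeping of bullet-colours matches the concatenation rule of Definition \ref{bulletcategory} (the fact that colours may be flipped in any block), and that a closed loop produces the correct scalar. Here the loop factor is $\dim(\bc^{2p}) = 2p$, but one must confirm that the colour constraints do not alter this count --- this is where the identification of mirror-marked partitions in $P_{\bullet}$ becomes essential. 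Consequently the spaces $\tu{span}(\wt{T}_\pi \mid \pi \in NC_{\bullet}(k,l))$ form a tensor category in the sense of Woronowicz.

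Next I would identify the generators. The category $NC_{\bullet}$ is generated by a small number of elementary diagrams, and the plan is to translate the defining relations of $A_s(p,0) \approx A_h(p)$ into the statement that the associated operators $\wt{T}_\pi$ lie in the appropriate Hom-spaces. By Proposition \ref{HomSpq} the algebra $A_s(p,0)$ is precisely the universal $C^*$-algebra for which $\xi$ is fixed by $U^{\ot 2}$ and the map $e_{i\alpha} \mapsto e_{i\alpha} \ot e_{\bar i \alpha}$ lies in $\Hom(U; U^{\ot 2})$. The first condition corresponds to a bulleted cap/cup pair (the two legs carrying opposite colours, encoding the relation $U_{z,y} = U^*_{\bar z, \bar y}$ of \eqref{ad5}), and the second to a bulleted ``fork'' diagram encoding the projection/magic relation. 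I would check that these generating diagrams belong to $NC_{\bullet}$ and that the tensor category they generate is exactly $NC_{\bullet}$; this is the combinatorial heart matching Definition \ref{bulletcategory} to the algebraic presentation.

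Having established both that the maps $\wt{T}_\pi$ for $\pi \in NC_{\bullet}$ form a Woronowicz tensor category and that this category is generated by diagrams corresponding precisely to the defining relations of $A_s(p,0)$, the Tannaka--Krein duality of Woronowicz (as invoked in Theorem \ref{catOHB}) yields that the Hopf algebra reconstructed from $(\wt{T}_\pi)_{\pi \in NC_{\bullet}}$ coincides with $A_s(p,0)$, giving the claimed equality of Hom-spaces. The main obstacle I anticipate is the composition step: unlike the unbulleted case treated in Theorem \ref{catOHB}, one must track the colour-matching rule through concatenation and be certain that the identification of mirror-marked blocks is exactly what makes $\wt{T}_{\pi} \wt{T}_{\sigma}$ factor correctly through $\wt{T}_{[\pi\sigma]}$ with the right power of $2p$. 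A secondary subtlety is verifying that no \emph{additional} intertwiners appear --- that $NC_{\bullet}$ is not merely contained in, but equals, the full category of intertwiners --- which follows once one argues that the generating diagrams above suffice to produce all of $NC_{\bullet}$ under the categorical operations.
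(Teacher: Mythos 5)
Your outline agrees with the paper's proof in its first two steps --- verifying that $\pi\mapsto\wt{T}_\pi$ respects the categorical operations on $NC_{\bullet}$ (with the loop factor $2p$ and the mirror-marking identification doing the bookkeeping), and reading off from Proposition \ref{HomSpq} that the category of representations of $S^+(p,0)$ is generated by the bulleted cap $\wt{T}_{\pi_1}$ and the bulleted fork $\wt{T}_{\pi_2}$ --- but it diverges at the crucial final step. You close the argument by the universality/exactness of Woronowicz's Tannaka--Krein reconstruction, exactly as the paper does for $O^+(p,q)$ in Theorem \ref{catOHB}: the reconstructed Hopf algebra is the universal one for the two generating relations, hence is $A_s(p,0)$, and its Hom-spaces are precisely the spans of the $\wt{T}_\pi$. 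The paper instead stops at the inclusion $\Hom(U^{\ot k};U^{\ot l})\supseteq\tu{span}(\wt{T}_\pi)$ and obtains the reverse inclusion by a Peter--Weyl dimension count: it computes $d_k=h_{S^+(p,0)}(\Tr(U^{\ot k}))$ using the free Poisson law of parameter $\tfrac12$ from \cite{stocheasy}, getting $d_k=\sum_{\pi\in NC(0,k)}2^{k-\nu(\pi)}$, and matches this with $\card(NC_{\bullet}(0,k))$ via the observation that an $l$-legged block admits $2^{l-1}$ inequivalent bullet patterns. Your route is legitimate and arguably more self-contained (no probabilistic input, and no implicit appeal to linear independence of the $\wt{T}_\pi$'s, which the dimension count quietly relies on), but it places the entire weight of the proof on a fully rigorous verification of the composition axiom for bulleted partitions --- precisely the point that fails for odd blocks in the unbulleted setting and that the paper flags as the obstruction to treating $S^+$ alongside $O^+,B^+,H^+$; the counting route is the standard safeguard in this literature when that verification is delicate. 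Two smaller remarks: you leave ``$\pi_1,\pi_2$ generate $NC_{\bullet}$'' as a promise, whereas the paper gives the argument (combine them to produce singletons and the colour-exchange partition $\pi_3$, then quote that the unbulleted analogues generate $NC$); and your closing sentence has the logic reversed --- generation of $NC_{\bullet}$ by the diagrams yields only the \emph{containment} of $\tu{span}(\wt{T}_\pi)$ in the intertwiner spaces, while the absence of additional intertwiners comes from the exactness of the Tannakian reconstruction (or, in the paper, from the moment count), which you do invoke correctly in your third paragraph.
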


\begin{proof}
It is standard to check that $NC_{\bullet}$ is a tensor category satisfying the properties needed to apply the Tannaka-Krein
duality of \cite{wor2} and that the map $\pi \to \wt{T}_{\pi}$ transforms the natural operations (concatenation with deletion of
the closed blocks, tensoring, turning upside-down) to the corresponding operations on the level of linear maps. Proposition \ref{HomSpq}
implies that the category of representations of $S^+(p,0)$ is generated by the morphisms $\wt{T}_{\pi_1}$ and $\wt{T}_{\pi_2}$
associated with the partitions
\[ \pi_1= {\raisebox{10pt}{\xymatrix@R=0.3pc @C=0.3pc{ & &  \\ & *=0{} &
\\ *=0{\bullet} \ar@{-}[ur] & & *=0{\circ} \ar@{-}[ul]}}}, \;\;\; \pi_2 = {\raisebox{10pt}{\xymatrix@R=0.3pc @C=0.3pc{  & *=0{\bullet} &
\\ & & \\ *=0{\bullet} \ar@{-}[uur] & & *=0{\circ} \ar@{-}[uul]}}} .\]
The partitions $\pi_1$ and $\pi_2$ generate the whole $NC_{\bullet}$. Indeed, it is well known that the analogous `non-bulleted'
partitions generate $NC$, and it suffices to notice that $\pi_1$ and $\pi_2$ can be first combined to obtain singletons and then
the `exchange' partition $\pi_3= {\raisebox{5pt}{\xymatrix@R=0.7pc @C=0.7pc{*=0{\bullet} \ar@{-}[d] \\
 *=0{\circ}}}}\,$ (the fact that $\wt{T}_{\pi_3}$ intertwines $U$ is in a sense a consequence of the fact
that entries of $u$ are self-adjoint). Once we know that the category generated by $\pi_1$ and $\pi_2$ contains partitions of all
shapes and we can exchange colours of markings at every vertex using $\pi_3$ the generation statement is clear.

The argument above shows that the category of representations of $S^+(p,0)$ is at least as big as $NC_{\bullet}$; so  there is an
inclusion $\QG \supset S^+(p,0)$, where $\QG$ is the compact quantum group arising as the Tannaka-Krein dual of $NC_{\bullet}$
via the described implementation. Note that the inclusion $\QG \supset S^+(p,0)$ can be described as the surjection on the level
of function algebras: $C(\QG) \twoheadrightarrow C(S^+(p,0))$. To conclude the proof, by using a standard Peter-Weyl argument, it
suffices to show that for each $k \in \bn$ the number $d_k:=\card(\Hom(1; U^{\ot k}))$ is equal to $\card(NC_{\bullet}(0,k))$.
The numbers $d_k$ are expressed by the formula
\[ d_k = h_{S^+(p,0)} \big(\Tr\, (U^{\ot k}) \big),\]
where $h_{S^+(p,0)}$ denotes the Haar state. It follows from the proof of Theorem \ref{thSpq} that when we compute the trace of
$u$ we obtain the two copies of the elements which turn up as the trace of the top-left block of $u$ in the `sudoku' picture. By
Theorem 7.2 and Corollary 7.4 of \cite{stocheasy} we know that the corresponding single random variable has (with respect to the
Haar state of $H^+_p$) the free Poisson distribution with parameter $\frac{1}{2}$. As the Haar state is obviously preserved by
the isomorphism between $S^+(p,0)$ and $H^+_p$ we have \begin{equation} d_k = 2^k \sum_{\pi \in NC(0,k)}
\left(\frac{1}{2}\right)^{\nu(\pi)} = \sum_{\pi \in NC(0,k)} 2^{k-\nu(\pi)} \label{Poissoncount}\end{equation} (where $\nu(\pi)$
denoted the number of blocks in the partition $\pi$ and we used the formula for the moments of the free Poisson distribution with
parameter $t$, see e.g.\ \cite{NicaSpeicher}). It remains to check that the number in \eqref{Poissoncount} equals $\card(NC_{\bullet}(0,k))$. But this follows from the fact
that for every non-bulleted block of a partition in $NC(0,k)$ which has $l$ legs we have $2^{l-1}$ choices of the bullet pattern.
\end{proof}

\begin{rem}
Again we have a corresponding statement for the classical version of $S^+(p,0)$, with the category $NC_{\bullet}$ replaced by
$P_{\bullet}$.
\end{rem}

Note that the categories of representations listed in Theorem \ref{catOHB} can also be described in terms of the bulleted
partitions with the fixed colouring pattern consistent with the implementation in \eqref{Tpi} (so for example if a block in a
pair partition is `vertical', both ends are given the same colour, and if it is `horizontal', then we have a colour exchange).

\subsection*{`Free product' of categories described by partitions}

Recall that for $p,q>0$ we have $S^+(p,q)= S^+(p,0)\, \hat{\star}\, S^+(0,q)$, and Theorem \ref{catSp} together with the discussion before it describe the categories of representations of both $S^+(p,0)$ and $S^+(0,q)$. Before we use this decomposition to provide a description of the category for $S^+(p,q)$, let us discuss a general free product framework.

If $\QG_1, \QG_2$ are compact quantum groups and $\QG = \QG_1\hat{\star}\, \QG_2$, although the knowledge of irreducible
representations of $\QG_1$ and $\QG_2$ suffices to describe the irreducible representations of $\QG$, the description on the
level of categories of representations seems to be quite difficult. If however respective categories for $\QG_1$ and $\QG_2$,
denoted say by $\cat_1$ and $\cat_2$, are given by (non-crossing) partitions, there is a natural candidate  for the category
corresponding to $\QG$ -- it should be given by all partitions which decompose into blocks of two colours, where the
sub-partition obtained by looking only at the blocks of the first colour belongs to $\cat_1$ and that for the second colour
belongs to $\cat_2$. The tensoring and reflection operations are defined as for usual partitions, and so is the concatenation,
with the caveat that to obtain a non-zero outcome the colours must fit together. The resulting category will be  denoted by
$\cat_1\star\, \cat_2$ (note that the construction works well also for bulleted partitions). As in the proof of Theorem
\ref{catSp}, using the Tannakian duality we can establish the existence of a compact quantum group $\QG'$, whose category of
representations equals $\cat_1\star \cat_2$, and deduce the existence of the inclusion $\QG' \supset \QG$. To show that this
inclusion is in fact an equality, we need to compare the dimensions of the fixed point spaces (or, equivalently, the moments of
the laws describing the characters of the defining fundamental representations). As the Haar state of $\QG$ was shown in
\cite{Wangfree} to be the free product of the Haar states of $\QG_1$ and $\QG_2$, the law related to $\QG$ arises as the free
convolution of the corresponding distributions for $\QG_1$ and $\QG_2$. Thus if this free convolution can be easily computed, and
the categories $\cat_1$ and $\cat_2$ are identical, the corresponding count can be performed without much difficulty. In
particular using the natural `free product' implementations $\pi \to \hat{T}_{\pi}$ defined in the spirit of \eqref{Tpi} we
obtain the following result.

\begin{tw} \label{freep}
Let $n,m \in \bn$. The category corresponding to the standard fundamental representations of $S^+_n \hat{\star}\, S^+_m$ (respectively, $H^+_n
\hat{\star}\, H^+_m$, $B^+_n \hat{\star}\, B^+_m$  and $O^+_n \hat{\star}\, O^+_m$) is equal to the span of $\{\hat{T}_{\pi}: \pi
\in \mathcal{C}\star \mathcal{C}\}$, where $\mathcal{C}= NC$ (respectively, $NC_{12}$, $NC_{\tu{even}}$ and $NC_2$).
\end{tw}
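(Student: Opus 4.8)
The plan is to follow the strategy that the paper has already laid out in the discussion preceding the theorem and in the proof of Theorem \ref{catSp}, applying it uniformly to each of the four free products. First I would fix one of the four cases, say $S^+_n \hat{\star}\, S^+_m$ with $\mathcal{C}=NC$, and carry out the argument in detail; the remaining three cases (with $\mathcal{C}=NC_{12}, NC_{\tu{even}}, NC_2$) follow by the same reasoning since the only inputs that change are the generating category $\mathcal{C}$ and the free convolution computation at the end. The central object is the two-colour category $\mathcal{C}\star\mathcal{C}$ together with its implementation $\pi \to \hat{T}_{\pi}$ on tensor powers of the fundamental representation $\begin{bmatrix} U_1 & 0\\0&U_2\end{bmatrix}$ of the dual free product.

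The argument proceeds in two halves, exactly as in the proof of Theorem \ref{catSp}. First I would verify that $\mathcal{C}\star\mathcal{C}$ is a tensor category in the sense of Woronowicz and that $\pi \to \hat{T}_{\pi}$ intertwines the categorical operations (tensoring, reflection, and colour-matching concatenation with deletion of closed blocks) with the operations on the associated linear maps. This is the routine but essential bookkeeping that the colour-matching rule on concatenation is precisely what reflects the block-diagonal structure of the fundamental representation. Since the generators of $\mathcal{C}$ (hence of each colour-component of $\mathcal{C}\star\mathcal{C}$) yield exactly the defining intertwiners of $S^+_n$ and $S^+_m$ separately, Tannaka-Krein duality produces a compact quantum group $\QG'$ with $\Hom$-spaces spanned by the $\hat{T}_{\pi}$, and the fact that $\mathcal{C}\star\mathcal{C}$ contains each colour-component gives an inclusion $\QG' \supset S^+_n \hat{\star}\, S^+_m$.

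The second half upgrades this inclusion to an equality by a dimension count via the standard Peter-Weyl argument: it suffices to show that for each $k$ the number $\card(\Hom(1;U^{\ot k}))$ for the free product equals $\card((\mathcal{C}\star\mathcal{C})(0,k))$. Here I would invoke the result of \cite{Wangfree} that the Haar state of a dual free product is the free product of the Haar states, so that the character law for $\QG=\QG_1\hat{\star}\,\QG_2$ is the free convolution of the character laws of $\QG_1$ and $\QG_2$; the moments of this free convolution compute $\card(\Hom(1;U^{\ot k}))$. On the combinatorial side, a partition in $(\mathcal{C}\star\mathcal{C})(0,k)$ is a choice of a two-colouring of the points together with a partition of each colour-class lying in $\mathcal{C}$, and one checks that counting such objects reproduces exactly the free-convolution moment formula. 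Because the two factors are identical (both $S^+_n$, resp.\ both $H^+_m$, etc.), the free convolution is tractable and the two counts match.

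The main obstacle I expect is the second half: the matching of $\card((\mathcal{C}\star\mathcal{C})(0,k))$ with the moments of the free convolution of the two identical character distributions. For $\mathcal{C}=NC$ this is essentially the free Poisson / Catalan bookkeeping already used in Theorem \ref{catSp}, but for $\mathcal{C}=NC_2, NC_{12}, NC_{\tu{even}}$ one must recall the correct character law of $O^+_n$, $B^+_n$, $H^+_n$ respectively, take its free square, and check that the combinatorics of inserting a two-colouring on a non-crossing partition (with each colour-subpartition constrained to the relevant class) reproduces these moments. The first half, by contrast, is formal and follows the template of \cite{TeoRoland} and Theorem \ref{catSp} with only notational changes, so I would present it briefly and concentrate the effort on making the free-convolution count explicit for at least one representative $\mathcal{C}$.
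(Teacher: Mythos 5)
Your overall architecture --- a Tannakian first half producing the inclusion $\QG'\supset \QG_1\hat{\star}\,\QG_2$, then a Peter--Weyl dimension count using the fact from \cite{Wangfree} that the character of the dual free product follows the free convolution of the two character laws --- is exactly the paper's. But there is one concrete misstep in the part you yourself identify as the crux. You describe an element of $(\mathcal{C}\star\mathcal{C})(0,k)$ as ``a two-colouring of the points together with a partition of each colour-class lying in $\mathcal{C}$''. This omits the requirement that the two colour-subpartitions must not cross \emph{each other}: the category $\mathcal{C}\star\mathcal{C}$ consists of globally non-crossing partitions whose blocks carry one of two colours, each colour class lying in $\mathcal{C}$. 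Counting the objects you describe gives $\sum_{j}\binom{k}{j}\,\card(\mathcal{C}(0,j))\,\card(\mathcal{C}(0,k-j))$, which is the moment formula for the \emph{classical} convolution of the two character laws and already disagrees with the free one at $k=4$ for $\mathcal{C}=NC$ ($92$ versus $90$, the discrepancy coming from configurations such as $\{1,3\}$ in one colour and $\{2,4\}$ in the other). The correct count is $\sum_{\pi\in\mathcal{C}(0,k)}2^{\nu(\pi)}$, one independent choice of colour per block of a globally non-crossing $\pi$. Executed literally, your dimension count would therefore not close.

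On the execution of the free-convolution side, you propose to recall the explicit character law of each of $O^+_n$, $B^+_n$, $H^+_n$, $S^+_n$ and free-convolve it with itself case by case. The paper sidesteps this with a uniform device worth knowing: by \cite{TeoRoland}, for each $x\in\{.,2,12,\tu{even}\}$ the measures $\mu_t$ with moments $\int\lambda^k\,d\mu_t=\sum_{\pi\in NC_x(0,k)}t^{\nu(\pi)}$ form a semigroup under $\boxplus$, and $\mu_1$ is the character law of the corresponding quantum group. Hence the free product character follows $\mu_1\boxplus\mu_1=\mu_2$, whose $k$-th moment $\sum_{\pi\in NC_x(0,k)}2^{\nu(\pi)}$ is visibly $\card((\mathcal{C}\star\mathcal{C})(0,k))$ once the block-colouring description above is in place. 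This handles all four cases simultaneously and is precisely where the non-crossing (rather than arbitrary) colour interleaving enters.
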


\begin{proof}
The proof follows as in Theorem \ref{catSp}, so we only provide the argument for the equality of the cardinality of partitions in
$(\mathcal{C}\star\mathcal{C})(0,k)$ and the dimension of the fixed point spaces for tensor powers of the defining
fundamental representations. Fix $x\in\{.,2,12,\tu{even}\}$ and for each $t>0$ consider the probability measure $\mu_t$ having as
moments $\int \lambda^k\,d\mu_t({\lambda})=\sum_{\pi\in NC_x}t^{\nu(\pi)}$, where $\nu(\pi)$ denotes the number of blocks in
$\pi$. The results in \cite{TeoRoland} imply that $\mu_1$ is the measure describing the distribution of the character of the
defining representation of the quantum group corresponding to $x$, and also that the measures $\{\mu_t: t >0\}$ form a semigroup
with respect to the free convolution $\boxplus$ (see \cite{voic}). A standard argument using the definition of the free product
construction implies that the character of the defining representation for $\QG_n^+\hat{\star}\,\QG_m^+$ follows the law
$\mu_1\boxplus\mu_1=\mu_2$, with moments given by $\int \lambda^k\,d\mu_2(\lambda)=\sum_{\pi\in NC_x}2^{\nu(\pi)}$. But this is
exactly the number of partitions featuring in $(\mathcal{C}\star\mathcal{C})(0,k)$  -- using the surjective map
$\mathcal{C}\star\mathcal{C}\to \mathcal{C}$ we see that each element in the preimage of a given $\pi\in \mathcal{C}(0,k)$ is the
partition $\pi$ with each block given one of the two colours; thus the preimage has $2^{\nu(\pi)}$ elements.
\end{proof}

Before we prove an analogous, more technical,  result for the quantum group $S^+(p,q)$ isomorphic to $S^+(p,0)\, \hat{\star}\, S^+(0,q)$ we need a
simple observation related to the free cumulants of probability measures. All relevant definitions can be found in  Lectures 11
and 12 in \cite{NicaSpeicher}.

\begin{lem}\label{lemcum}
Assume that $\nu$ is a compactly supported probability measure on $\mathbb{R}$. Let $X$ be a real-valued random variable with
distribution $\nu$, let $\mu=\nu \boxplus \nu$ and let $\nu'$ be the law of $2X$. Denote the respective free cumulants of $\mu$
and of $\nu'$ by $(\kappa_k(\mu))_{k=1}^{\infty}$ and $(\kappa_k(\nu'))_{k=1}^{\infty}$. Then for each $k \in \bn$
\[ \kappa_k(\nu') = 2^{k-1} \kappa_k(\mu).\]
\end{lem}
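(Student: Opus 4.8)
The plan is to use the two basic properties of free cumulants: their behaviour under free convolution and under rescaling. Recall that for a compactly supported probability measure the free cumulants $(\kappa_k)_{k=1}^\infty$ are the coefficients appearing in the relation between the moment generating series and the $R$-transform; they linearise free convolution in the sense that the $R$-transform is additive, equivalently $\kappa_k(\rho_1 \boxplus \rho_2) = \kappa_k(\rho_1) + \kappa_k(\rho_2)$ for all $k$ (see Lecture 12 of \cite{NicaSpeicher}). This is the single key input.

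First I would compute $\kappa_k(\mu)$. Since $\mu = \nu \boxplus \nu$, additivity of free cumulants under free convolution gives immediately
\[ \kappa_k(\mu) = \kappa_k(\nu \boxplus \nu) = 2\,\kappa_k(\nu), \qquad k \in \bn. \]
Next I would compute $\kappa_k(\nu')$ using the scaling behaviour of free cumulants. If $X$ has law $\nu$, then $2X$ has law $\nu'$, and the free cumulants of a scalar multiple $cX$ satisfy $\kappa_k(cX) = c^k\,\kappa_k(X)$ (this follows from the fact that the $k$-th free cumulant is homogeneous of degree $k$ in the random variable, again Lectures 11--12 of \cite{NicaSpeicher}). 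Taking $c = 2$ yields
\[ \kappa_k(\nu') = 2^k\,\kappa_k(\nu), \qquad k \in \bn. \]

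Finally I would combine the two computations. From the first display $\kappa_k(\nu) = \tfrac{1}{2}\kappa_k(\mu)$, and substituting into the second gives
\[ \kappa_k(\nu') = 2^k \cdot \tfrac{1}{2}\,\kappa_k(\mu) = 2^{k-1}\,\kappa_k(\mu), \]
which is the claimed identity.

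The argument is essentially a one-line manipulation once the two standard properties are in place, so I do not expect any real obstacle; the only point requiring mild care is to invoke the scaling rule $\kappa_k(cX)=c^k\kappa_k(X)$ in the correct normalisation, which is why I would cite the relevant lectures of \cite{NicaSpeicher} explicitly rather than rederive it. An alternative, slightly more computational route would be to work directly with $R$-transforms — writing $R_{\nu'}(z) = 2\,R_\nu(2z)$ for the rescaling and $R_\mu = 2R_\nu$ for the free square — and then reading off the cumulants as Taylor coefficients; but the cumulant-level argument above is cleaner and avoids bookkeeping with the transform variable.
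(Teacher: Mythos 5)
Your proposal is correct and follows essentially the same route as the paper: additivity of free cumulants under free convolution gives $\kappa_k(\nu)=\tfrac12\kappa_k(\mu)$, and the degree-$k$ homogeneity of cumulants under scaling (which the paper extracts from the moment--cumulant formula rather than quoting directly) gives $\kappa_k(\nu')=2^k\kappa_k(\nu)$. Combining the two yields the claim exactly as in the paper.
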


\begin{proof}
It follows from Proposition 12.3 in \cite{NicaSpeicher} that $\kappa_n(\nu) = \frac{1}{2} \kappa_n(\mu)$. Further the
 fact that the moments and cumulants are related by the Speicher's moment-cumulant formula (\cite{Rolandmomcum}):
\begin{equation} \label{momcum} d_k = \sum_{\pi \in NC(0,k)}\, \prod_{b: \tu{block in } \pi}\, \kappa_{\tu{card}(b)},  \end{equation}
 implies that $\kappa_k(\nu') = 2^k \kappa_k(\nu)$.
\end{proof}

The following theorem completes the description of the categories for the main family of quantum groups studied in this paper. 

\begin{tw} \label{catfree}
Let $u$ denote the fundamental representation of $S^+(p,q)$ introduced in Definition \ref{Spq}.  Then for all $k,l \in \bn_0$
$$\Hom(U^{\otimes k};U^{\otimes l})=\tu{span}(\wt{T}_\pi|\pi\in (NC_{\bullet} \star NC)(k,l)).$$
\end{tw}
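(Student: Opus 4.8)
The plan is to follow the template established in the proof of Theorem \ref{catSp} and Theorem \ref{freep}, using the decomposition $S^+(p,q) \approx S^+(p,0) \,\hat{\star}\, S^+(0,q)$ from Theorem \ref{thSpq}. First I would set up the candidate category $(NC_{\bullet} \star NC)(k,l)$ together with its natural implementation $\pi \to \wt{T}_{\pi}$, built in the spirit of \eqref{Tpi} by combining the bulleted implementation for the $p$-part (acting on $(\bc^{2p})^{\ot \bullet}$, recording the conjugation pattern dictated by \eqref{ad5}) with the usual set-partition implementation for the $q$-part. One checks by the now-routine verifications that this is a tensor category in the sense of Woronowicz and that $\pi \to \wt{T}_{\pi}$ sends tensoring, concatenation (with deletion of closed blocks) and reflection to the corresponding operations on linear maps; here the colours must match upon concatenation exactly as in Definition \ref{bulletcategory} and in the free product discussion preceding Theorem \ref{freep}. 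Tannaka-Krein duality (\cite{wor2}) then produces a compact quantum group $\QG'$ whose category of representations is $NC_{\bullet}\star NC$, and comparing generators (the ones from Proposition \ref{HomSpq} together with the magic-unitary relations) yields an inclusion $\QG' \supset S^+(p,q)$, i.e.\ a surjection $C(\QG') \twoheadrightarrow C(S^+(p,q))$.

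To upgrade this inclusion to an isomorphism I would run the Peter-Weyl dimension count: it suffices to show that for every $k\in\bn$ the number $d_k := \card(\Hom(1;U^{\ot k}))$ equals $\card\big((NC_{\bullet}\star NC)(0,k)\big)$. By the free product structure of the Haar state (\cite{Wangfree}) the character of the fundamental representation of $S^+(p,q)$ is distributed as the free convolution of the characters of $S^+(p,0)$ and $S^+(0,q)$. The character of $S^+(0,q)\approx S^+_q$ follows the free Poisson law of parameter $1$, while from the proof of Theorem \ref{catSp} the character of $S^+(p,0)$ is distributed as $2X$, where $X$ is free Poisson of parameter $\frac12$ (the factor $2$ reflecting the two copies of the top-left sudoku block in the trace). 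This is precisely the situation of Lemma \ref{lemcum}: with $\nu$ the free Poisson law of parameter $\frac12$, $\mu = \nu\boxplus\nu$ is free Poisson of parameter $1$, and $\nu'$ (the law of $2X$) governs $S^+(p,0)$.

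The heart of the computation is to turn $d_k = \sum_{\pi\in NC(0,k)}\prod_{b} \kappa_{\card(b)}$ from \eqref{momcum} into the cardinality count. The free convolution makes the cumulants additive, so the cumulants of the $S^+(p,q)$-character are $\kappa_k(\nu') + \kappa_k(\mu)$. By Lemma \ref{lemcum} this equals $(2^{k-1}+1)\kappa_k(\mu)$, and since $\mu$ is free Poisson of parameter $1$ all its free cumulants equal $1$. Hence each block $b$ of size $\card(b)=l$ contributes a factor $2^{l-1}+1$ to the moment-cumulant sum, giving
\[ d_k = \sum_{\pi \in NC(0,k)}\; \prod_{b:\,\tu{block in }\pi}\big(2^{\card(b)-1}+1\big). \]
I would then match this against $\card\big((NC_{\bullet}\star NC)(0,k)\big)$ combinatorially: fixing the underlying non-crossing partition $\pi$, each block may be coloured either `white' (belonging to the $NC$-factor, one choice) or `black' (belonging to the $NC_{\bullet}$-factor), and in the latter case a block with $l$ legs admits $2^{l-1}$ distinct bullet patterns by the mirror-identification of Definition \ref{bulletcategory}, exactly as computed at the end of the proof of Theorem \ref{catSp}. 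Thus each block of size $l$ offers $1 + 2^{l-1}$ admissible decorations, and summing over $\pi\in NC(0,k)$ reproduces the displayed formula. The main obstacle I anticipate is bookkeeping the colouring conventions so that the free-convolution cumulant factor $2^{l-1}+1$ lines up block-by-block with the two-colour-plus-bullet combinatorics; once that correspondence is pinned down the equality $d_k = \card\big((NC_{\bullet}\star NC)(0,k)\big)$, and hence the theorem, follows.
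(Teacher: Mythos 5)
Your proposal is correct and follows essentially the same route as the paper: the Tannaka--Krein inclusion from the free-product category construction, reduction to the Peter--Weyl fixed-point count, additivity of free cumulants under free convolution, the identification of the $S^+(p,0)$-character as $2X$ with $X$ free Poisson of parameter $\tfrac12$ so that Lemma \ref{lemcum} gives cumulants $2^{k-1}$, and the final block-by-block match of $\prod_b(2^{\card(b)-1}+1)$ with the two-colour-plus-bullet count for $(NC_{\bullet}\star NC)(0,k)$. No gaps to report.
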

\begin{proof}
As in the proof of Theorem \ref{freep} we only provide the argument for the equality of the number of partitions in
$(NC_{\bullet} \star NC)(0,k)$ and the $k$-th moment of the character of the defining representation of $S^+(p,q)$. The latter
can be expressed via the moment-cumulant formula \eqref{momcum}, with the free cumulants $(\kappa_k)_{k=1}^{\infty}$ given by the
sum of the free cumulants of the corresponding laws of the characters for $S^+(p,0)$ and $S^+(0,q)$ (this is a consequence of the
discussion before Theorem \ref{freep} and Proposition 12.3 in \cite{NicaSpeicher}). The free cumulants for the second law (which
is the free Poisson distribution) are equal to $1$. As the free Poisson laws form a free convolution semigroup,
the proof of Theorem \ref{catSp} and Lemma \ref{lemcum} imply that the free cumulants for the first law are equal to $2^{k-1}$.  Hence the
moment-cumulant formula for the $k$-th moment of the law we are interested in yields:
\[ d_k = \sum_{\pi \in NC(0,k)}\,  \prod_{b: \tu{block in } \pi}\, (2^{\tu{card}(b)-1} +1).\]
It remains to note that the above number indeed corresponds to the number of partitions in $(NC_{\bullet} \star NC)(0,k)$: for
each block $b$ in a given non-crossing partition $\pi$ of $k$-points we can choose whether it `comes' from $NC_{\bullet}$ or $NC$
and in the first case we have additionally $2^{\tu{card}(b)-1}$ choices of inequivalent bulleting patterns.
\end{proof}

\section{Relations between the two-parameter families}

Recall the diagram \eqref{diag1} describing the relations between $O^+_n$, $B^+_n$, $H^+_n$ and $S^+_n$. Universal properties
imply that the corresponding diagram can be drawn in the two-parameter case; for each $p, q \in \bn_0$ we have

\begin{equation} \label{diag2}\begin{matrix}
A_o(p,q)&\to&A_b(p,q)\\
\\
\downarrow&&\downarrow\\
\\
A_h(p,q)&\to&A_s(p,q)
\end{matrix}
\;\;\;\;\;\;\;\;\;\;\;\;\;\;\;\;\;\;\;\;
\begin{matrix}
O^+(p,q)&\supset&B^+(p,q)\\
\\
\cup&&\cup\\
\\
H^+(p,q)&\supset&S^+(p,q)
\end{matrix}\end{equation}

Below we describe further connections between the quantum groups studied above, first discussing the general framework.

\begin{deft}\label{4groups}
Assume that we have surjective morphisms of Hopf $C^*$-algebras, corresponding to inclusions of compact quantum groups, as
follows: \begin{equation} \label{4diag}\begin{matrix}
A&\to&B\\
\\
\downarrow&&\downarrow\\
\\
C&\to&D
\end{matrix}
\quad\ \quad :\quad\ \quad
\begin{matrix}
\QG&\supset&\QH\\
\\
\cup&&\cup\\
\\
\QK&\supset&\QL
\end{matrix}\end{equation}
\begin{enumerate}
\item We write $\QG=\,<\QH,\QK>$ if the $*$-algebra ideal $\ker(A\to B)\cap\ker(A \to C)$ contains no nonzero Hopf ideal.
\item We write $\QL=\QH\cap \QK$ when $\ker(A\to D)$ is the Hopf ideal generated by $\ker(A\to B)$ and $\ker(A\to C)$.
\end{enumerate}
\end{deft}

The definitions above can be easily seen to coincide with the standard ones in the classical case.

More generally, given morphisms $\alpha: A \to B$ and $\beta:A \to C$ as above, one can define a quantum group $<\QG,\QH>\,\subset
\QK$ by the formula $C(<\QG,\QH>)=A/I$, where $I$ is the biggest Hopf ideal contained in $\ker(\alpha)\cap\ker(\beta)$. Once
again, this definition agrees with the usual one in the classical case.

These notions are best understood in terms of the Hopf image formalism, introduced in \cite{bb2}. Consider indeed the morphism
$(\alpha,\beta):C\to A\times B$. Then $C(<\QG,\QH>)$ is the Hopf image of $(\alpha,\beta)$, and we have $<\QG,\QH>\,=\QK$ if and
only if $(\alpha,\beta)$ is inner faithful.

Similarly given a 4-term diagram as in the above definition, we can form the Hopf ideal $J=\,<\ker(A\to B),\ker(A\to C)>$, and
define a quantum group $\QH\cap \QK$ by $C(\QH\cap \QK)=A/J$. We have $\QL=\QH\cap \QK$ if and only if the above condition (2) is
satisfied.

In order to deal effectively with part (1) of Definition \ref{4groups}, we use the following Tannakian reformulation.

\begin{lem} \label{4grreformulated}
Consider the diagram \eqref{4diag} and denote the Hopf morphism between $A$ and $B$ (respectively, between $A$ and $C$) by
$\alpha$(respectively, $\beta$). We have $\QK=\,<\QG,\QH>$ if and only if
$$\tu{Fix}(R)=\tu{Fix}((\id\otimes\alpha)R)\cap \tu{Fix}((\id\otimes\beta)R)$$
for any representation $R$ arising as a tensor product between $U$ and $\bar{U}$'s, with $U$ being the fundamental representation of $\QG$.
\end{lem}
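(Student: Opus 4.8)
The plan is to read the statement through Woronowicz's Tannaka--Krein duality \cite{wor2}, using the order-reversing correspondence between quantum subgroups of $\QG$ that share its fundamental representation $U$ and the Tannakian categories containing $\mathcal{C}_{\QG}$. By Definition \ref{4groups}(1) the generation condition $\QG=\,<\QH,\QK>$ means that $\ker\alpha\cap\ker\beta$ contains no nonzero Hopf ideal, i.e. that the largest Hopf ideal $I\subseteq\ker\alpha\cap\ker\beta$, which defines $C(<\QH,\QK>)=A/I$, vanishes. First I would record two standard reductions: since every finite-dimensional representation of $\QG$ embeds in a tensor product of copies of $U$ and $\bar{U}$, and since $\Hom(R,S)$ is recovered from $\tu{Fix}(\bar{R}\ot S)$, the category $\mathcal{C}_{\QG}$ is determined by the spaces $\tu{Fix}(R)$ with $R$ a word in $U,\bar{U}$; so it is enough to treat such $R$, exactly as in the statement.

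Next I would identify the two families appearing in the claim. Writing $R=(R_{st})$ with $R_{st}\in A$, a scalar vector $\xi$ lies in $\tu{Fix}(R)$ exactly when $\sum_t R_{st}\xi_t=\xi_s 1$ for all $s$. Applying the $*$-homomorphism $\alpha$ (resp. $\beta$) to these relations shows that $\tu{Fix}((\id\ot\alpha)R)$ and $\tu{Fix}((\id\ot\beta)R)$ are precisely the fixed-point spaces of the subgroups $\QH$ and $\QK$, so the assignments $R\mapsto\tu{Fix}((\id\ot\alpha)R)$ and $R\mapsto\tu{Fix}((\id\ot\beta)R)$ are the categories $\mathcal{C}_{\QH}$ and $\mathcal{C}_{\QK}$. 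Since $0\in\ker\alpha\cap\ker\beta$, both contain $\tu{Fix}(R)=\tu{Fix}_{\QG}(R)$.

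The core step will be to prove that the intersection family $\mathcal{D}$, $\mathcal{D}_R=\tu{Fix}((\id\ot\alpha)R)\cap\tu{Fix}((\id\ot\beta)R)$, is the category of $<\QH,\QK>$. One first checks that $\mathcal{D}$ is again a Tannakian category: a finite intersection of such categories is stable under tensoring, composition and conjugation and contains $\mathcal{C}_{\QG}$, so by Tannaka--Krein it is $\mathcal{C}_{\QG'}$ for a unique subgroup $\QG'\subseteq\QG$. From $\mathcal{D}\subseteq\mathcal{C}_{\QH}$ and $\mathcal{D}\subseteq\mathcal{C}_{\QK}$ the order reversal gives $\QG'\supseteq\QH$ and $\QG'\supseteq\QK$, whence $<\QH,\QK>\,\subseteq\QG'$ by the minimality of $<\QH,\QK>$ (it is the smallest subgroup of $\QG$ containing both). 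Conversely, the factorizations $A/I\to B$ and $A/I\to C$ show $\QH,\QK\subseteq\,<\QH,\QK>$, so $\mathcal{C}_{<\QH,\QK>}\subseteq\mathcal{C}_{\QH}\cap\mathcal{C}_{\QK}=\mathcal{D}=\mathcal{C}_{\QG'}$ and hence $\QG'\subseteq\,<\QH,\QK>$. Thus $\mathcal{C}_{<\QH,\QK>}=\mathcal{D}$.

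Finally, $\QG=\,<\QH,\QK>$ holds iff the two groups have equal categories, i.e. iff $\tu{Fix}(R)=\mathcal{D}_R=\tu{Fix}((\id\ot\alpha)R)\cap\tu{Fix}((\id\ot\beta)R)$ for every word $R$ in $U,\bar{U}$, which is the asserted equivalence. I expect the core step to be the main obstacle: one inclusion ($\mathcal{C}_{<\QH,\QK>}\subseteq\mathcal{D}$) is elementary, obtained by pushing the fixed-point relations forward along $\alpha$ and $\beta$, but the reverse inclusion cannot be seen element-by-element -- passing from ``the discrepancies $\sum_t R_{st}\xi_t-\xi_s 1$ lie in $\ker\alpha\cap\ker\beta$'' to ``they lie in the largest Hopf ideal $I$'' is exactly what forces the use of Tannaka--Krein to reconstruct $\QG'$ from $\mathcal{D}$, and is what makes the order-reversing Galois correspondence send `generated by' to `intersection of'.
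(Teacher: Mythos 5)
Your proposal is correct and follows essentially the same route as the paper: reduce via Frobenius duality from $\tu{Fix}$ spaces to $\Hom$ spaces, observe that the intersection of the two pulled-back categories is again a Tannakian category corresponding to some quantum subgroup $\QG'$ by Woronowicz's duality, and identify $\QG'$ with the Hopf-image construction $<\QH,\QK>$. The only difference is that where the paper cites Theorem 8.4 of \cite{bb2} for this last identification, you unpack it into an explicit double-inclusion argument via the order-reversing Galois correspondence, which is a legitimate and complete substitute.
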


\begin{proof}
The result is a consequence of general considerations in \cite{bb2}; we sketch the proof below.

Let $I$ be the biggest Hopf ideal contained in $\ker(\alpha)\cap\ker(\beta)$, whose existence follows from \cite{bb2}, and let
$\QK_1=\,<\QG,\QH>$ be the compact quantum group given by $C(\QK_1)=C(\QK)/I$.

By Frobenius duality, the collection of equalities in the statement is equivalent to the following collection of equalities:
$$\Hom(R;S)=\Hom((\id\otimes\alpha)R;(\id\otimes\alpha)S)\cap \Hom((\id\otimes\beta)R;(\id\otimes\beta)S),$$
where now both $R$ and $S$ are representations arising as tensor products of several copies of $U$ and $\bar{U}$.

According to the general results of Woronowicz in \cite{wor2}, the Hom spaces on the right form a Tannakian category, which
should therefore correspond to a certain compact quantum group $\QK_2$. Our claim is that we have $\QK_1=\QK_2$. Indeed, this
follows from Theorem 8.4 of \cite{bb2} applied to the morphism $(\alpha, \beta):C \to A \times B$ (and can be viewed as a consequence of the general Peter-Weyl type results in \cite{wor1} and the Tannakian duality results in \cite{wor2}).

In particular $\QK=\QK_1$ is equivalent to $\QK=\QK_2$, and we are done.
\end{proof}


Before we formulate the specific intersection/generation results in the cases in which we are interested, we need another lemma:

\begin{lem}\label{intersect}
Let $\alg$ be a $C^*$-algebra, $n \in \bn$ and let $U\in M_{n}(\alg)$ be a unitary matrix whose entries are partial isometries. If for each $z \in \{1, \ldots,n\}$ there is $\sum_{y=1}^n U_{y,z}=1$, then each $U_{y,z}$ is an orthogonal projection.
\end{lem}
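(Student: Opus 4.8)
The plan is to follow the bookkeeping of Proposition \ref{ort0}: for each $y,z\in\{1,\ldots,n\}$ write $P_{y,z}=U_{y,z}^*U_{y,z}$ and $Q_{y,z}=U_{y,z}U_{y,z}^*$ for the initial and range projections of the partial isometry $U_{y,z}$. From unitarity and Proposition \ref{ort0} I already know that within any fixed column $z$ the initial projections $(P_{y,z})_y$ are mutually orthogonal and satisfy $\sum_y P_{y,z}=1_{\alg}$, while the range-orthogonality relation available there is $U_{y,z}U_{x,z}^*=0$ for $y\neq x$. The goal is to upgrade this to the \emph{initial}-orthogonality statement $U_{y,z}^*U_{x,z}=0$ for $y\neq x$, because once that is in hand the hypothesis finishes the argument in one line.

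First I would exploit the hypothesis $\sum_y U_{y,z}=1_{\alg}$ by writing, for fixed $z$,
\[ 1_{\alg}=\Big(\sum_y U_{y,z}\Big)^*\Big(\sum_x U_{x,z}\Big)=\sum_{x,y}U_{y,z}^*U_{x,z}=\sum_x P_{x,z}+\sum_{x\neq y}U_{y,z}^*U_{x,z}. \]
Since $\sum_x P_{x,z}=1_{\alg}$ by the diagonal of $U^*U=1$, this forces $\sum_{x\neq y}U_{y,z}^*U_{x,z}=0$.

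Next I would separate the individual terms of this vanishing sum using the orthogonality of the $P_{\cdot,z}$. From $U_{y,z}^*=P_{y,z}U_{y,z}^*$, $U_{x,z}=U_{x,z}P_{x,z}$ and $P_{a,z}P_{y,z}=\delta_{ay}P_{a,z}$, for fixed $a\neq b$ one gets $P_{a,z}U_{y,z}^*=\delta_{ay}U_{a,z}^*$ and $U_{x,z}P_{b,z}=\delta_{xb}U_{x,z}$. Compressing the identity $\sum_{x\neq y}U_{y,z}^*U_{x,z}=0$ by $P_{a,z}$ on the left and $P_{b,z}$ on the right then picks out exactly the single term with $(y,x)=(a,b)$, yielding $U_{a,z}^*U_{b,z}=0$ for all $a\neq b$ in the column $z$. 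I expect this to be the main obstacle: this within-column initial-orthogonality is genuinely new, it is of the opposite ($P$-type) form to the range-orthogonality supplied by Proposition \ref{ort0}, and it is precisely here that the bistochastic hypothesis must be converted into a usable pairwise relation before the concluding computation can work.

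Finally I would multiply the hypothesis on the left by $U_{x,z}^*$:
\[ U_{x,z}^*=U_{x,z}^*\sum_y U_{y,z}=U_{x,z}^*U_{x,z}+\sum_{y\neq x}U_{x,z}^*U_{y,z}=P_{x,z}, \]
the off-diagonal terms vanishing by the previous step. As $P_{x,z}$ is self-adjoint, taking adjoints gives $U_{x,z}=P_{x,z}$, so each entry coincides with its own initial projection and is therefore an orthogonal projection, as claimed.
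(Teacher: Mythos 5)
Your proof is correct, but it takes a slightly longer route than the paper's. The paper's argument is a one-liner: write $U_{y,z}=1-\sum_{x\neq y}U_{x,z}$ and multiply on the \emph{right} by $U_{y,z}^*$; the cross terms $U_{x,z}U_{y,z}^*$ ($x\neq y$) vanish by the within-column range-orthogonality already supplied by Proposition \ref{ort0}, giving $U_{y,z}U_{y,z}^*=U_{y,z}^*$, whose left side is self-adjoint, so $U_{y,z}=U_{y,z}^*=U_{y,z}U_{y,z}^*$ is a projection. You instead judged that the within-column \emph{initial}-orthogonality $U_{y,z}^*U_{x,z}=0$ was needed and manufactured it from the bistochastic hypothesis by expanding $1=\bigl(\sum_y U_{y,z}\bigr)^*\bigl(\sum_x U_{x,z}\bigr)$ and compressing by the initial projections $P_{a,z}$, $P_{b,z}$ --- a valid derivation, after which multiplying the hypothesis on the left by $U_{x,z}^*$ finishes as you say. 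The only thing you missed is that the orthogonality relation you do have for free (the $UU^*$-type one within a column) already suffices if you multiply on the other side; your detour is not wrong, and it has the minor virtue of showing that the column entries have mutually orthogonal initial spaces before one knows they are projections, but it is unnecessary for the lemma.
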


\begin{proof}
Fix $y,z\in \{1, \ldots,n\}$. We have $U_{y,z} = 1 - \sum_{x\neq y} U_{x,z}$
Proposition \ref{ort0} implies that multiplying the last equality by $U_{y,z}^*$ on the right yields
$U_{y,z} U_{y,z}^*  = U_{y,z}^*$. Hence $U_{y,z}$ is a selfadjoint projection.
\end{proof}

\begin{tw}
The following relations hold:
\begin{enumerate}
\item $O^+(p,q)=\,<B^+(p,q),H^+(p,q)>$.
\item $S^+(p,q)=B^+(p,q)\cap H^+(p,q)$.
\end{enumerate}
\end{tw}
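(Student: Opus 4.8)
The plan is to prove both statements using the Tannakian reformulation from Lemma \ref{4grreformulated} for part (1) and the explicit generator-relation description for part (2), relying throughout on the partition-category descriptions established in Theorem \ref{catOHB}. For part (1), by Lemma \ref{4grreformulated} it suffices to show that for every representation $R$ built as a tensor product of copies of $U$ and $\bar U$ (where $U$ is the fundamental representation of $O^+(p,q)$), we have
\[ \tu{Fix}(R) = \tu{Fix}((\id \ot \alpha) R) \cap \tu{Fix}((\id \ot \beta) R), \]
where $\alpha: A_o(p,q) \to A_b(p,q)$ and $\beta: A_o(p,q) \to A_h(p,q)$ are the canonical surjections. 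Since all three quantum groups are self-conjugate (the conjugate of $U$ is again $U$ up to the fixed similarity by $F_{p,q}$), every such $R$ is equivalent to some $U^{\ot k}$, so it is enough to compare the fixed-point spaces inside $U^{\ot k}$. By Theorem \ref{catOHB} these are spanned respectively by $\{T_\pi : \pi \in NC_2(0,k)\}$ for $O^+(p,q)$, by $\{T_\pi : \pi \in NC_{12}(0,k)\}$ for $B^+(p,q)$, and by $\{T_\pi : \pi \in NC_{\tu{even}}(0,k)\}$ for $H^+(p,q)$.

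The combinatorial heart of part (1) is therefore the partition identity
\[ NC_2 = NC_{12} \cap NC_{\tu{even}}, \]
which holds because a non-crossing partition all of whose blocks are either pairs or singletons, and simultaneously all of whose blocks have even size, can have no singletons and hence consists only of pairs. First I would make precise that the operators $T_\pi$ are linearly independent for $\pi$ ranging over non-crossing partitions (in the relevant dimension regime $2p+q \geq 2$, which is guaranteed since $p^2+q^2>0$), so that an intersection of spans corresponds exactly to the intersection of the indexing sets of partitions; this linear independence is standard and was used implicitly in \cite{TeoRoland}. Granting this, the identity $\tu{span}(T_\pi|\pi \in NC_2) = \tu{span}(T_\pi|\pi\in NC_{12}) \cap \tu{span}(T_\pi|\pi\in NC_{\tu{even}})$ is immediate from $NC_2 = NC_{12}\cap NC_{\tu{even}}$, and part (1) follows from Lemma \ref{4grreformulated}.

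For part (2), I would work directly at the level of algebras, which is cleaner here. By Definition \ref{4groups}(2), proving $S^+(p,q) = B^+(p,q) \cap H^+(p,q)$ means showing that $A_s(p,q)$ is the quotient of $A_o(p,q)$ by the Hopf ideal generated jointly by $\ker(A_o(p,q) \to A_b(p,q))$ and $\ker(A_o(p,q) \to A_h(p,q))$. Concretely, the generators of $A_o(p,q)$ form a unitary $U$ satisfying \eqref{ad1}--\eqref{ad4}; the kernel toward $A_b(p,q)$ imposes the row/column sum condition $\sum_z U_{z,y} = \sum_z U_{y,z} = 1$, while the kernel toward $A_h(p,q)$ imposes that each $U_{z,y}$ is a partial isometry. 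I would show that imposing both families of relations forces each $U_{z,y}$ to be an orthogonal projection, which is precisely the defining relation of $A_s(p,q)$ (Definition \ref{Spq}). This is exactly the content of Lemma \ref{intersect}: a unitary matrix of partial isometries whose columns sum to $1$ has orthogonal projections as entries. The quotient of $A_o(p,q)$ by the combined relations thus coincides with $A_s(p,q)$, giving $S^+(p,q) = B^+(p,q) \cap H^+(p,q)$.

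The step I expect to be the main obstacle is the linear independence of the $T_\pi$ in part (1): the intersection-of-spans argument is only valid once one knows that distinct non-crossing partitions yield linearly independent operators, so that $\tu{span}(S_1) \cap \tu{span}(S_2) = \tu{span}(S_1 \cap S_2)$ for sets of partitions $S_1, S_2$. For the orthogonal case this linear independence can fail in very low dimension, so I would need to verify it holds in our regime $2p+q \geq 2$, and also confirm that it survives the passage to the marked/bulleted partitions implicit in the $T_\pi$ implementation for $B^+(p,q)$ and $H^+(p,q)$. Once this is settled, both the combinatorial identity $NC_2 = NC_{12} \cap NC_{\tu{even}}$ and Lemma \ref{intersect} are elementary, and the two parts follow respectively from Lemma \ref{4grreformulated} and from unwinding Definition \ref{4groups}(2).
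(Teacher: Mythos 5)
Your proposal is correct and follows essentially the same route as the paper: part (1) via Lemma \ref{4grreformulated}, Theorem \ref{catOHB} and the identity $NC_2=NC_{12}\cap NC_{\tu{even}}$, and part (2) via Lemma \ref{intersect} applied to the combined defining relations. The only addition is your explicit flagging of the linear independence of the $T_\pi$ needed to pass from an intersection of spans to an intersection of partition sets, a point the paper leaves implicit (deferring to \cite{TeoRoland}); note only that $p^2+q^2>0$ does not by itself give $2p+q\geq 2$ (take $p=0$, $q=1$), though that case is degenerate.
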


\begin{proof}
(1) follows from Lemma \ref{4grreformulated}, Theorem \ref{catOHB}, and the equality  $NC_2=NC_{12}\cap NC_{\tu{even}}$.

(2) is a direct consequence of Lemma \ref{intersect}.
\end{proof}

\subsection*{Free products}

We recall from Section \ref{OBS} that we have a canonical isomorphism $A_o(p,q)\simeq A_o(2p+q)$. It is well known that for any
$n,m \in \bn$ there exists a natural surjective map $A_o(m+n)\to A_o(m) \star A_o(n)$. Similar maps turn out to exist in all the cases
considered in this paper.

\begin{prop}\label{freeone}
Let $g \in \{o,b,h,s\}$. There exists a natural surjective map $A_g(p,q)\to A_g(p,0)\star A_g(0,q)$  such that the following diagram
(in which the vertical maps are the ones introduced earlier in this section) is commutative:
$$\begin{matrix}
A_o(p,q)&\to&A_o(p,0)\star A_o(0,q)\\
\\
\downarrow&&\downarrow\\
\\
A_g(p,q)&\to&A_g(p,0)\star A_g(0,q)
\end{matrix}$$
The horizontal maps in the diagram above intertwine respective comultiplications.
\end{prop}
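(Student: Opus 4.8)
The plan is to construct the horizontal maps directly at the level of generators, using the universal properties that define all the algebras in question. Recall that $A_o(p,0)\star A_o(0,q)$ is generated by the entries of two fundamental unitaries: one of size $2p$ coming from $A_o(p,0)$, with entries I will call $V_{z,y}$ for $z,y\in\Jnd_p$, and one of size $q$ coming from $A_o(0,q)$, with entries $W_{M,N}$ for $M,N\in\{1,\dots,q\}$. First I would assemble these into a single $(2p+q)\times(2p+q)$ block-diagonal matrix
\[
\wt{U}=\begin{bmatrix} (V_{z,y}) & 0 \\ 0 & (W_{M,N})\end{bmatrix},
\]
viewed inside $M_{2p+q}\ot\bigl(A_o(p,0)\star A_o(0,q)\bigr)$. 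The key observation is that $\wt{U}$ is unitary (being block-diagonal with unitary blocks) and that its entries satisfy precisely the relations \eqref{ad1}--\eqref{ad4}: the off-diagonal entries are zero, so relations \eqref{ad2} and \eqref{ad3} are vacuously verified, while \eqref{ad1} holds for the $V$-block because $A_o(p,0)=A_o(F_{p,0})$ forces it via Theorem \ref{thmOpq}, and \eqref{ad4} holds for the $W$-block because $A_o(0,q)\approx A_o(q)$ has self-adjoint generators. By the universal property of $A_o(p,q)$ (Theorem \ref{thmOpq}), the assignment $U_{z,y}\mapsto\wt{U}_{z,y}$ extends to a unital $*$-homomorphism $A_o(p,q)\to A_o(p,0)\star A_o(0,q)$, which is the top horizontal map.

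Next I would handle the other three values of $g$ by the same block-diagonal recipe, checking in each case that $\wt{U}$ satisfies the \emph{additional} defining relations of $A_g(p,q)$ beyond those of $A_o(p,q)$. For $g=b$, I must verify that each row and column of $\wt{U}$ sums to $1$; this follows since the row/column sums decouple across the two blocks, and the $V$- and $W$-entries sum to $1$ within $A_b(p,0)$ and $A_b(0,q)$ respectively. For $g=s$ (respectively $g=h$) I check that every entry of $\wt{U}$ is a projection (respectively a partial isometry): the diagonal blocks inherit this from the generators of $A_s$ (respectively $A_h$), and the zero off-diagonal entries are trivially projections and partial isometries. Thus in each case the universal property of $A_g(p,q)$ yields the desired horizontal map $A_g(p,q)\to A_g(p,0)\star A_g(0,q)$. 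Surjectivity is automatic, since the image contains all the block entries $V_{z,y}$ and $W_{M,N}$, which generate the target.

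Commutativity of the square reduces to comparing the two composites on generators. The vertical maps are the canonical quotient maps sending $U_{z,y}\mapsto U_{z,y}$, and both horizontal maps send $U_{z,y}$ to the corresponding entry of the block-diagonal matrix $\wt U$; since the block-diagonal construction is performed identically for $A_o$ and for $A_g$, the two routes around the square agree on each generator, hence everywhere. Finally, that each horizontal map intertwines the comultiplications follows because all maps involved send fundamental corepresentations to fundamental corepresentations: on the $A_g(p,q)$ side the coproduct is $\Com(U_{z,y})=\sum_w U_{z,w}\ot U_{w,y}$, and the free-product coproduct on $A_g(p,0)\star A_g(0,q)$ restricted to $\wt U$ has exactly the matching block-diagonal form (the cross terms vanish because the off-diagonal entries of $\wt U$ are zero). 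I expect the only genuinely delicate point to be this last verification, namely confirming that the block-diagonal matrix $\wt U$ is indeed a fundamental corepresentation for the free-product comultiplication and that no cross-block terms survive; once that is pinned down the intertwining of coproducts is immediate.
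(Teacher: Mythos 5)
Your proposal is correct and follows essentially the same route as the paper: form the block-diagonal matrix from the two fundamental unitaries, verify it satisfies the defining relations of $A_g(p,q)$, invoke universality to get the map, note surjectivity from the fact that the block entries generate the target, and obtain commutativity and the intertwining of coproducts directly from the definitions. The paper's proof is just a more condensed version of the same argument.
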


\begin{proof}
Let $U_1 \in M_{2p}(A_g(p,0))$ and $U_2 \in M_{q}(A_g(0,q))$ be fundamental unitary representations of the respective compact
quantum groups. Consider the unitary matrix $\begin{bmatrix} U_1  & 0 \\ 0 & U_2 \end{bmatrix}$ viewed as an element of $M_{2p+q}
(A_g(p,0)\star A_g(0,q))$. It is easy to see that it satisfies all the conditions required of the generating matrix in $M_{2p+q}
(A_g(p,q))$, so the universality implies the existence of a $^*$-homomorphism requested by the proposition. It is easily seen to
be surjective, as entries of $U_1$ (respectively, $U_2$) generate $A_g(p,0)$ (respectively, $A_g(0,q)$). An explicit description
of the comultiplications involved implies the last statement of the proposition.

Recall that the existence of surjective (compact quantum group) morphisms from $A_o(p,q)$ to $A_g(p,q)$ followed in an analogous
way; the fact that the diagram above is commutative is thus a direct consequence of the definitions of the maps considered.
\end{proof}

Using directly the language of compact quantum groups we obtain the following corollary:

\begin{cor}\label{freecor}
Let $\QG \in \{O^+, H^+, B^+, S^+\}$. There exist natural inclusions
$$\begin{matrix}
O^+(p,q)&\supset&O^+(p,0)\,\hat{\star}\,O^+(0,q)\\
\\
\cup&&\cup\\
\\
\QG(p,q)&\supset&\QG(p,0)\,\hat{\star}\, \QG(0,q)
\end{matrix}$$
\end{cor}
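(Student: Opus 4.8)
The plan is to read off the corollary directly from Proposition~\ref{freeone}, since the content is a straightforward dualization of the commutative square of surjective Hopf $^*$-homomorphisms established there. The key piece of the dictionary is the standard fact recalled in Section~\ref{Secdefcom}: a surjective unital $^*$-homomorphism $C(\QG_1)\to C(\QG_2)$ intertwining the comultiplications corresponds exactly to an inclusion $\QG_2\subset\QG_1$ of compact quantum groups. Accordingly, each of the four arrows in the diagram of Proposition~\ref{freeone} will be reinterpreted as an inclusion pointing in the opposite direction.

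First I would fix the relevant identifications of algebras. By construction $A_o(p,q)=C(O^+(p,q))$ and $A_g(p,q)=C(\QG(p,q))$ for the letter $g\in\{o,b,h,s\}$ matching the chosen $\QG\in\{O^+,B^+,H^+,S^+\}$. Moreover, the definition of the dual free product recalled in Section~\ref{Secdefcom} gives $C(O^+(p,0)\,\hat{\star}\,O^+(0,q))=A_o(p,0)\star A_o(0,q)$ and $C(\QG(p,0)\,\hat{\star}\,\QG(0,q))=A_g(p,0)\star A_g(0,q)$. Under these identifications the four surjections of Proposition~\ref{freeone} become precisely the four inclusions of the corollary: the top horizontal map yields $O^+(p,0)\,\hat{\star}\,O^+(0,q)\subset O^+(p,q)$, the bottom horizontal map yields $\QG(p,0)\,\hat{\star}\,\QG(0,q)\subset\QG(p,q)$, the left vertical map yields $\QG(p,q)\subset O^+(p,q)$, and the right vertical map yields $\QG(p,0)\,\hat{\star}\,\QG(0,q)\subset O^+(p,0)\,\hat{\star}\,O^+(0,q)$.

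Finally I would note that commutativity of the square is automatic: since the diagram of Proposition~\ref{freeone} commutes and each of its arrows intertwines comultiplications, and since the passage from a surjective Hopf morphism to the associated inclusion of quantum groups is functorial, the resulting diagram of inclusions commutes as well. There is essentially no substantive obstacle here; the only points demanding care are the bookkeeping of arrow directions (surjections of function algebras correspond to reversed inclusions of quantum groups) and the correct identification of $A_g(p,0)\star A_g(0,q)$ with the algebra of functions on the dual free product $\QG(p,0)\,\hat{\star}\,\QG(0,q)$, both of which are supplied directly by the definitions in Section~\ref{Secdefcom}.
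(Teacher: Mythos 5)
Your proposal is correct and is exactly the argument the paper intends: the corollary is obtained by dualizing the commutative square of surjective Hopf $^*$-homomorphisms from Proposition \ref{freeone}, reversing the arrows to get inclusions of compact quantum groups. The paper itself offers no further proof beyond the phrase ``using directly the language of compact quantum groups,'' so your more explicit bookkeeping of the identifications matches and slightly elaborates its route.
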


In the case of $A_o(p,q)$ the map described  in Proposition \ref{freeone} and the isomorphisms/surjections recalled before it can
be combined into the following commutative diagram:
$$\begin{matrix}
A_o(p,q)&\to&A_o(p,0)\star A_o(0,q)\\
\\
\downarrow&&\downarrow\\
\\
A_o(2p+q)&\to&A_o(2p)\star A_o(q)
\end{matrix}$$
(the commutativity is the consequence of the fact how the isomorphism was described in the proof of Theorem \ref{thmOpq} and the
block-diagonal form of the matrices $C_{p,q}$ implementing it).

Theorem \ref{thSpq} implies that the inclusion $S^+(p,q)\supset S^+(p,0)\,\hat{\star}\,S^+(0,q)$ is in fact an isomorphism. In all the
other cases when both $p$ and $q$ are non-zero (for $B^+(p,q)$ we actually need to assume $q>1$ to avoid trivialities)
$\QG(p,0)\,\hat{\star}\,\QG(0,q)$ is a proper quantum subgroup of $\QG(p,q)$. This is the content of the next proposition.

\begin{prop}
If $p,q \neq 0$ then the inclusions $O^+(p,0)\,\hat{\star}\,O^+(0,q) \subset O^+(p,q)$ and  $H^+(p,0)\,\hat{\star}\,H^+(0,q)
\subset H^+(p,q)$ are proper. If $p>0$ and $q>1$ then the inclusion $B^+(p,0)\,\hat{\star}\,B^+(0,q) \subset B^+(p,q)$ is proper.
\end{prop}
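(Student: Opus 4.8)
The plan is to establish properness by comparing, on the two sides of each inclusion, the dimension of the fixed-point space $\tu{Fix}(U^{\ot 2})$ of the common fundamental representation $U$ (of size $2p+q$). Recall the general identity $\dim\tu{Fix}(V)=h(\chi_V)$, where $h$ is the Haar state and $\chi_V=\Tr(V)$ the character; applied to $V=U^{\ot 2}$ and using $\chi_{U^{\ot 2}}=\chi^2$ (with $\chi=\Tr(U)$) this gives $\dim\tu{Fix}(U^{\ot 2})=h(\chi^2)$. If this number differs between $\QG(p,q)$ and $\QG(p,0)\hat{\star}\,\QG(0,q)$, then the surjection $C(\QG(p,q))\to C(\QG(p,0)\hat{\star}\,\QG(0,q))$ cannot be an isomorphism and the inclusion is proper. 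The crucial structural input is that, by \cite{Wangfree}, the Haar state of a dual free product is the free product of the Haar states; hence the characters $\chi_1,\chi_2$ of the two factors are freely independent in $R(\QG(p,0)\hat{\star}\,\QG(0,q))$, and the character of the block-diagonal fundamental representation is $\chi_1+\chi_2$. Its mean is $h(\chi_1)+h(\chi_2)$ and, by freeness, its variance is the sum of the variances of $\chi_1$ and $\chi_2$.

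For $O^+(p,q)$ and $H^+(p,q)$ I would argue as follows. By Theorem \ref{catOHB} the fundamental character has mean $0$ and $h(\chi^2)=1$: indeed $\tu{Fix}(U)=0$ and $\tu{Fix}(U^{\ot 2})$ is one-dimensional, spanned by $\xi$, since $|NC_2(0,1)|=|NC_{\tu{even}}(0,1)|=0$ and $|NC_2(0,2)|=|NC_{\tu{even}}(0,2)|=1$. On the free-product side each factor $\QG(p,0)$ and $\QG(0,q)$ likewise has a centered character with second moment $1$ (this uses only $p\ge 1$ and $q\ge 1$; note that even the degenerate factors $O^+_1\cong H^+_1\cong\widehat{\bz_2}$ have a $\pm 1$-valued character of mean $0$ and second moment $1$). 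Hence $\chi_1+\chi_2$ is centered with variance $2$, so $h((\chi_1+\chi_2)^2)=h(\chi_1^2)+2h(\chi_1\chi_2)+h(\chi_2^2)=1+0+1=2$. Since $2\ne 1$, the fixed-point dimensions differ and both inclusions are proper.

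The case of $B^+(p,q)$ requires the extra hypothesis $q>1$ and is where the bookkeeping is delicate, because the fundamental representations are no longer irreducible. Using $B^+(p,q)\approx O^+_{2p+q-2}$ together with the fact that $U$ preserves the two vectors given by the real and imaginary parts of $C_{p,q}\eta$ (Theorem \ref{Bpqpreserv}), the character decomposes as $\chi=2+s$ with $s$ a standard semicircular element, giving $h(\chi)=2$ and $h(\chi^2)=5$. On the free-product side $B^+(p,0)\approx O^+_{2p-1}$ and $B^+(0,q)\approx O^+_{q-1}$, so $\chi_1=1+s_1$ and $\chi_2=1+s_2$ with $s_1,s_2$ free standard semicirculars; thus $\chi_1+\chi_2=2+(s_1+s_2)$ has mean $2$ and variance $2$, whence $h((\chi_1+\chi_2)^2)=6\ne 5$ and the inclusion is proper. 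The hypothesis $q>1$ enters precisely here: when $q=1$ the factor $B^+(0,1)\approx O^+_0$ is trivial, so $s_2=0$, the free-product character is merely $2+s_1$ with second moment $5$, and the inclusion degenerates into the isomorphism $B^+(p,0)\hat{\star}\,B^+(0,1)\approx B^+(p,1)\approx B^+(p,0)$ recorded earlier. The main obstacle is thus this $B^+$ accounting: one must check that the multiplicities of the trivial representation agree (both means equal $2$), so that it is genuinely the variance of the semicircular part, $1$ versus $2$, that distinguishes the two quantum groups.
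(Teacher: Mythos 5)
Your argument is correct, but it takes a genuinely different route from the paper's. You separate the two quantum groups by the numerical invariant $\dim\tu{Fix}(U^{\ot 2})=h(\chi^2)$: on the large side this is read off from the partition categories of Theorem \ref{catOHB} (giving $1$ for $O^+(p,q)$ and $H^+(p,q)$) or from the decomposition $\chi=2+s$ for $B^+(p,q)$, while on the free-product side Wang's theorem makes $\chi_1$ and $\chi_2$ free, so the second moment of $\chi_1+\chi_2$ is obtained by adding variances; since passing to a quantum subgroup can only enlarge fixed-point spaces, the strict inequality forces properness. The paper argues instead at the level of the universal algebras: it suffices to exhibit one unitary matrix satisfying the defining relations of $A_g(p,q)$ with a nonvanishing off-diagonal ($p$--$q$) entry, since such a matrix yields a representation of $A_g(p,q)$ that does not factor through $A_g(p,0)\star A_g(0,q)$, which kills all off-diagonal generators. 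For $O^+$ and $B^+$ such matrices exist already among commutative examples; for $H^+$ the paper writes down an explicit $3\times 3$ matrix of partial isometries for $p=q=1$. Your approach buys uniformity (one mechanism for all three families) and requires no constructions, at the price of invoking the Section 4 category computations and the isomorphism $B^+(p,q)\approx O^+_{2p+q-2}$; the paper's route is more elementary and produces concrete witnesses. One remark on the delicate $B^+$ bookkeeping: your count $5$ versus $6$ presupposes that the trivial representation has multiplicity $2$ in the fundamental representation of $B^+(p,q)$ (the $O^+_{2p+q-2}$ picture), whereas the $NC_{12}$ description in Theorem \ref{catOHB} would give multiplicity $1$ and hence $h(\chi)=1$, $h(\chi^2)=2$; fortunately either reading produces a value different from $6$ (in the second reading already the first moments $1\neq 2$ disagree), so your conclusion is robust to which of the paper's two descriptions of $B^+(p,q)$ one takes as primary.
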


\begin{proof}
It is enough to show that the corresponding homomorphisms $A_o(p,q)\to A_o(p,0)\star A_o(0,q)$, $A_h(p,q)\to A_h(p,0)\star
A_h(0,q)$ and $A_b(p,q)\to A_b(p,0)\star A_b(0,q)$ are not injective; in other words it suffices to find unitary matrices
satisfying the defining conditions for $A_o(p,q)$, $A_h(p,q)$ and $A_b(p,q)$ which are not of the form $\begin{bmatrix} U_1  & 0
\\ 0 & U_2
\end{bmatrix}$ with $U_1$ a $2p$ by $2p$ matrix. In the case of the orthogonal and the bistochastic group the existence of such matrices is visible
already at the commutative level. 

Consider then the case of $H^+$. It suffices to find suitable matrices for $p=q=1$. Let then $Q_A, Q_B, P_A, P_B$ be non-zero
orthogonal projections on some Hilbert space $\Hil$ such that $Q_A + Q_B+ P_A+ P_B=1$ and let $A,B, C, D$ be partial isometries
in $B(\Hil)$ such that $AA^*=Q_A$, $A^*A=P_A$, $BB^*=Q_B$, $B^*B=P_B$, $CC^*=1-C^*C = P_B +Q_A$, $DD^*=1-D^*D=P_A +Q_B$. The
existence of such partial isometries can be assured by choosing all the projections to have infinite-dimensional ranges. Then the
matrix
\[ \begin{bmatrix} A & B & C \\ B^* & A^* & C^* \\ D &D^* & 0\end{bmatrix}\]
can be checked to be a unitary satisfying assumptions listed in Definition \ref{Hpq}.
\end{proof}

Note that the above construction excludes the possibility of the partial isometries $A,B,C,D$ generating a commutative algebra.
This is a general fact -- in Theorem \ref{classHpq} below we will see that if we additionally request commutativity then the
`off-diagonal' terms of a unitary satisfying the defining conditions of $A_h(p,q)$ must vanish.


\subsection*{Quantum permutation groups as quotient quantum groups of $H^+(p,q)$}

\begin{deft}
If $\QG_1$ and $\QG_2$ are compact quantum groups and $\pi:C(\QG_1) \to C(\QG_2)$ is a unital injective $^*$-homomorphism
intertwining the respective coproducts, then $\QG_1$ is said to be a quotient quantum group of $\QG_2$ (alternatively, $\QG_2$ is
said to be a quantum group extension of $\QG_1)$.
\end{deft}

 Consider once again the quantum group $H^+(p,q)$. As the generators of $A_h(p,q)$, denoted by $U_{z,y}$ ($z,y \in \Jnd_{p,q}$)
are partial isometries, the operators $P_{z,y}:=U^*_{z,y} U_{z,y}$ are projections.

\begin{tw}
The $C^*$-subalgebra
of $A_h(p,q)$ generated by $P_{z,y}$ ($z,y \in \Jnd_{p,q}$) is isomorphic to $A_s(2p+q)$.
\end{tw}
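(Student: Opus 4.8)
The plan is to show that the subalgebra $B \subseteq A_h(p,q)$ generated by the projections $P_{z,y} = U_{z,y}^* U_{z,y}$ is a copy of $A_s(2p+q)$. Since $A_s(2p+q)$ is the universal $C^*$-algebra generated by a $(2p+q)\times(2p+q)$ magic unitary, I would proceed in two directions: first verify that the matrix $P = (P_{z,y})_{z,y\in\Jnd_{p,q}}$ is a magic unitary inside $A_h(p,q)$, which by universality yields a surjective $^*$-homomorphism $\varphi: A_s(2p+q) \to B$; and then construct an inverse, or otherwise argue that $\varphi$ is injective, to conclude it is an isomorphism.

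**The magic unitary property.** First I would check that each $P_{z,y}$ is a projection, which is immediate since $U_{z,y}$ is a partial isometry, so $P_{z,y} = U_{z,y}^* U_{z,y}$ is its initial projection. Next I must verify that the rows and columns of $P$ consist of mutually orthogonal projections summing to $1$. Orthogonality within a fixed column follows directly from Corollary \ref{corort}: the relations $U_{x,z}^* U_{y,z} = 0$ for $x\neq y$ (together with $U_{y,z}^* U_{x,z}=0$) give $P_{x,z} P_{y,z} = U_{x,z}^*U_{x,z}U_{y,z}^*U_{y,z}$, and one manipulates this using the partial isometry relation $U_{y,z}=U_{y,z}U_{y,z}^*U_{y,z}$ and the vanishing products to get $0$. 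The column-sum condition $\sum_{y} P_{y,z} = 1$ is exactly the statement established in the proof of Proposition \ref{ort0}, namely $\sum_{y=1}^{n} P_{y,z} = 1_{\alg}$. For the rows I would apply the symmetric relations in Corollary \ref{corort} (the $U_{z,y}^* U_{z,x}=0$ family), noting that the initial projection of $U_{z,y}$ relates to the range projection of $U_{\bar z,\bar y}$ via the adjoint relation \eqref{ad5}, so that row-orthogonality and row-completeness of $P$ follow from column-orthogonality and completeness applied to the conjugate indices. This establishes that $P$ is a magic unitary and hence produces the surjection $\varphi: A_s(2p+q) \to B$.

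**Injectivity — the main obstacle.** The hard part will be proving injectivity of $\varphi$, since surjectivity onto $B$ is automatic from the construction. The natural route is a Tannakian/representation-theoretic count: I would compare the categories of representations, or equivalently the dimensions of fixed-point spaces, of the quantum group $S^+_{2p+q}$ with those of the quotient quantum group whose function algebra is $B$. Concretely, one can use Theorem \ref{catOHB}, which identifies $\Hom(U^{\otimes k}; U^{\otimes l})$ for $H^+(p,q)$ with $\mathrm{span}(T_\pi \mid \pi \in NC_{\mathrm{even}}(k,l))$, to understand the representation generated by $U$, and then show that the subcategory generated by the fundamental representation of $B$ matches exactly the category $NC(k,l)$ of all noncrossing partitions governing $S^+_{2p+q}$. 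Since $\varphi$ is a surjection of Hopf $C^*$-algebras intertwining coproducts, it induces an inclusion of quantum groups, and injectivity is equivalent to this inclusion being an equality, which one verifies by checking the fixed-point space dimensions $\card(\Hom(1; V^{\otimes k}))$ agree, where $V=(P_{z,y})$; this is a Peter--Weyl argument in the spirit of the proof of Theorem \ref{catSp}.

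**An alternative, cleaner approach.** Rather than a dimension count, I would prefer to exhibit an explicit $^*$-homomorphism in the reverse direction, or better, identify $B$ abstractly. Observe that $P_{z,y}$ depends only on the pair $(z,y)$ and that the adjoint relations \eqref{ad1}--\eqref{ad4} force coincidences such as $P_{i\alpha, j\beta} = U_{i\alpha,j\beta}^* U_{i\alpha,j\beta} = U_{\bar i\alpha, \bar j\beta} U_{\bar i\alpha,\bar j\beta}^* = Q_{\bar i \alpha, \bar j \beta}$ (a range projection). The cleanest strategy is to construct, using universality of $A_s(2p+q)$, the map $\varphi$ as above, and then invoke the universal property of $A_h(p,q)$ in reverse: one builds a concrete unitary matrix of partial isometries over $A_s(2p+q)$ whose initial projections recover the magic unitary, thereby producing a splitting. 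I expect the genuine technical content to lie precisely in establishing that no relations beyond the magic unitary relations hold among the $P_{z,y}$ — equivalently that $\varphi$ has trivial kernel — and that this is best settled by the Peter--Weyl comparison invoking Theorem \ref{catOHB} for $H^+(p,q)$ and the known category $NC$ for $S^+_{2p+q}$, exactly as the proof technique of Theorem \ref{catSp} was carried out.
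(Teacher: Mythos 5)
Your proposal is correct and follows essentially the same route as the paper: verify that $(P_{z,y})$ is a magic unitary using Proposition \ref{ort0} and Corollary \ref{corort}, observe that the coproduct restricts so that one gets a quantum subgroup of $S^+_{2p+q}$, and then upgrade the surjection to an isomorphism by a Peter--Weyl comparison of fixed-point space dimensions via the partition categories. The only detail you leave implicit is the concrete combinatorial step the paper supplies, namely that the relevant fixed-point spaces are indexed by the noncrossing even partitions in which consecutive odd-starting pairs are joined, and that collapsing these pairs gives a bijection with $NC(k,l)$.
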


\begin{proof}
Let $A$ denote the $C^*$-subalgebra
of $A_h(p,q)$ generated by $P_{z,y}$ ($z,y \in \Jnd_{p,q}$).
Note first that the generators $\{P_{z,y}:z,y \in \Jnd_{p,q}\}$ satisfy the same relations as generators of $A_s (2p+q)$; indeed
for $z,y \in \Jnd_{p,q}$
\begin{align*} \sum_{x \in \Jnd_{p,q}} P_{z,x} P_{y,x} &= \sum_{x \in \Jnd_{p,q}} U^*_{z,x} U_{z,x} U^*_{y,x} U_{y,x} =
\delta_{z}^y \sum_{x \in \Jnd_{p,q}} U^*_{z,x} U_{z,x} U^*_{z,x} U_{z,x} \\&= \delta_{z}^y \sum_{x \in \Jnd_{p,q}} U^*_{z,x}
U_{z,x} =  \delta_{z}^y \sum_{\bar{x} \in \Jnd_{p,q}} U_{\bar{z},\bar{x}} U_{\bar{z},\bar{x}}^*  = \delta_{z}^y 1.
 \end{align*}
Similarly it follows from Corollary  \ref{corort} that
\begin{align*} \sum_{x \in \Jnd_{p,q}} P_{x,z} P_{x,y} &= \sum_{x \in \Jnd_{p,q}} U^*_{x,z} U_{x,z} U^*_{x,y} U_{x,y} =
\delta_{z}^y \sum_{x \in \Jnd_{p,q}} U^*_{x,z} U_{x,z} U^*_{x,z} U_{x,z} \\&= \delta_{z}^y \sum_{x \in \Jnd_{p,q}} U^*_{x,z}
U_{x,z} = \delta_{z}^y 1.
 \end{align*}
If $\Com$ denotes the coproduct of $\alg_h(p,q)$ then for $z,y \in \Jnd_{p,q}$
\[ \Com(P_{z,y}) = \sum_{x \in \Jnd_{p,q}} P_{z,x} \ot P_{x,y}.\]
Hence $A$ is isomorphic to $C(\QG)$, where $\QG$ is a quantum subgroup of $S^+_{2p+q}$. It remains to check that $\QG$ is
actually $S^+_{2p+q}$. For that it suffices to analyse the corresponding categories of representations, and even more
specifically to check that cardinalities of fixed point sets of tensor powers of the fundamental representations of $\QG$
coincide with those appearing for $S^+_{2p+q}$ (see Section \ref{coreps-partitions}, where this method was applied several times).

In the partition picture described in Section \ref{coreps-partitions} the passage from $H^+(p,q)$ to $\QG$ corresponds to
considering only partitions with even number of upper and lower points and composing them (both from above and from below) with a
suitable number of copies of the partition  ${\raisebox{8pt}{\xymatrix@R=0.4pc @C=0.4pc{   *=0{} \ar@{-}[d] &  *=0{} \ar@{-}[d]  \\
*=0{} \ar@{-}[r] &  *=0{} \\  *=0{} \ar@{-}[u] &  *=0{} \ar@{-}[u]}}}\,\,$. This means that we are left only with these
partitions in $NC(2k,2l)$ in which the  pairs of points beginning at an odd place are necessarily joined (it is easy to see that
all such partitions arise in this procedure); we denote them by $NC_{\tu{join}}(2k,2l)$. Now collapsing the pairs listed above we
obtain a bijection between $NC_{\tu{join}}(2k,2l)$ and $NC(k,l)$. This suffices to perform the count needed to finish the proof.
\end{proof}

Using the dual language and noticing that the map $A_p(p,q) \to A$ is an injective morphism preserving the respective
coproducts, we obtain the following corollary.

\begin{cor}
The quantum group $H^+(p,q)$ is an extension of $S^+_{2p+q}$.
\end{cor}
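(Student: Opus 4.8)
The plan is to assemble the desired injective, coproduct-intertwining morphism by composing the isomorphism furnished by the preceding theorem with the canonical inclusion of the subalgebra $A$ into $A_h(p,q)$, and then to check that the result meets every requirement in the definition of a quantum group extension.

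First I would invoke the theorem to obtain a $^*$-isomorphism $\phi\colon A_s(2p+q)\to A$ carrying the entries of the magic unitary generating $A_s(2p+q)$ to the projections $P_{z,y}=U^*_{z,y}U_{z,y}$. Composing $\phi$ with the inclusion $\iota\colon A\hookrightarrow A_h(p,q)$ yields a unital $^*$-homomorphism $\pi:=\iota\circ\phi\colon A_s(2p+q)\to A_h(p,q)$. Since $\phi$ is an isomorphism and $\iota$ is injective, the composite $\pi$ is injective, and it is clearly unital.

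Next I would verify that $\pi$ intertwines the two coproducts. The coproduct of $A_s(2p+q)$ acts on the generators of its fundamental magic unitary by $\Com(u_{z,y})=\sum_{x}u_{z,x}\ot u_{x,y}$, while the computation performed in the proof of the theorem establishes the matching identity $\Com(P_{z,y})=\sum_{x\in\Jnd_{p,q}}P_{z,x}\ot P_{x,y}$ for the images. Because the coproduct is determined by its values on the generators, this gives $(\pi\ot\pi)\circ\Com=\Com\circ\pi$, so $\pi$ is a morphism of Hopf $C^*$-algebras.

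With $\pi$ a unital injective $^*$-homomorphism intertwining the coproducts, the definition of a quantum group extension is satisfied directly: $S^+_{2p+q}$ is a quotient quantum group of $H^+(p,q)$, equivalently $H^+(p,q)$ is an extension of $S^+_{2p+q}$. There is no genuine obstacle here, as the substantive content was already settled by the theorem; the only point requiring attention is the coproduct compatibility, and that is immediate from the formula $\Com(P_{z,y})=\sum_{x\in\Jnd_{p,q}}P_{z,x}\ot P_{x,y}$ recorded in its proof.
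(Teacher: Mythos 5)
Your proposal is correct and follows the same route as the paper: the corollary is obtained by composing the isomorphism $A_s(2p+q)\approx A$ from the preceding theorem with the inclusion $A\subset A_h(p,q)$, noting that this injective unital $^*$-homomorphism intertwines the coproducts via the formula $\Com(P_{z,y})=\sum_{x\in\Jnd_{p,q}}P_{z,x}\ot P_{x,y}$ already established there. The paper records exactly this observation (in one sentence) before stating the corollary.
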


\section{Classical versions and interpretations in terms of classical/quantum symmetries} \label{classver}

Each of the quantum groups studied above has a classical version. These are understood as follows: if $\QG$ is a compact quantum
group then the quotient of the algebra $C(\QG)$ by its commutator ideal is isomorphic to the algebra of continuous functions on a
certain uniquely determined compact group $G$. Then we call $G$ the classical version of $\QG$. Note that if we use the notation
$G(p,q)$ for the classical version of the quantum group $G^+(p,q)$, then Corollary \ref{freecor} and a straightforward analysis
of classical versions of free products described in Section 1 implies that we have the following inclusions:
$$\begin{matrix}
O(p,q)&\supset&O(p,0)\times O(0,q)\\
\\
\cup&&\cup\\
\\
G(p,q)&\supset&G(p,0)\times G(0,q)
\end{matrix}$$

The combination of results from the previous section and \cite{TeoRoland} allow us to identify the classical version of
$O^+(p,q)$ with $O_{2p+q}$, the classical version of $B^+(p,q)$ with $O_{2p+q-i(p,q)}$ (where $i(p,q)=1$ if $pq=0$ and $i(p,q)=2$
if $p,q>0$) and the classical version of $S^+(p,q)$ with $H_p \times S_q$.

\begin{tw} \label{classHpq}
The classical version of $H^+(p,q)$ is $(\bt^p \rtimes H_p) \times H_q$ (recall that $\bt^p \rtimes H_p$ is the usual isometry
group of $\bt^p$).
\end{tw}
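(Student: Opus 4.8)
The plan is to compute the commutative quotient $C(G):=A_h(p,q)/\overline{[A_h(p,q),A_h(p,q)]}$ directly, by identifying its characters, i.e.\ the ``classical points'' of $H^+(p,q)$. A $*$-character $\chi$ sends the generators to scalars $u_{z,y}:=\chi(U_{z,y})\in\bc$, and the group law on $G$ is the one dual to the coproduct, namely matrix multiplication of the resulting matrices $u=(u_{z,y})_{z,y\in\Jnd_{p,q}}$. Thus $G$ is the group of complex $(2p+q)\times(2p+q)$ matrices $u$ which are unitary, whose entries are \emph{scalar} partial isometries (so $u_{z,y}\in\{0\}\cup\bt$, since $\lambda=|\lambda|^2\lambda$ forces $|\lambda|\in\{0,1\}$), and which satisfy the scalar form of \eqref{ad5}, namely $u_{z,y}=\overline{u_{\bar z,\bar y}}$.

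First I would observe that unitarity together with the condition $u_{z,y}\in\{0\}\cup\bt$ forces $u$ to be \emph{monomial}: each column has unit norm and its entries have modulus $0$ or $1$, so exactly one entry per column (and, symmetrically, per row) is nonzero of modulus one; this also recovers the orthogonality of Corollary~\ref{corort} at the scalar level. Writing $u_{z,y}=\lambda_y\,\delta_{z,\sigma(y)}$ for a permutation $\sigma$ of $\Jnd_{p,q}$ and unimodular scalars $\lambda_y$, the relation $u_{z,y}=\overline{u_{\bar z,\bar y}}$ translates into the two requirements
\[ \sigma(\bar y)=\overline{\sigma(y)},\qquad \lambda_y=\overline{\lambda_{\bar y}}\qquad(y\in\Jnd_{p,q}).\]
Hence $\sigma$ commutes with the bar involution; since the fixed points of the bar are exactly the $q$-part, $\sigma$ preserves the $q$-part and the $p$-part separately, and in particular the entries joining the two parts vanish — this is the commutative degeneration anticipated in the remark following the properness proposition.

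It then remains to read off the two factors. On the $q$-part the bar is the identity, so $\sigma$ restricts to a permutation in $S_q$ and each $\lambda_M\in\bt$ with $\lambda_M=\overline{\lambda_M}$ lies in $\{\pm1\}$; this is precisely a signed permutation matrix, giving the direct factor $H_q$ (these real monomial matrices multiply exactly as $H_q$). On the $p$-part $\sigma$ permutes the bar-orbits $\{0\alpha,1\alpha\}$, which is an element of $H_p=\bz_2\wr S_p$, while within each orbit $\lambda_{0\alpha}=\overline{\lambda_{1\alpha}}\in\bt$ contributes one free phase $\theta_\alpha$, i.e.\ a factor $\bt^p$. To identify the group law I would conjugate the $p$-block by $C_{p,0}$: by Theorem~\ref{thmOpq} this turns these matrices into real orthogonal ones, and one checks they are exactly the block-monomial orthogonals whose nonzero $2\times2$ blocks are planar rotations or reflections. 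The subgroup of diagonal phases is the normal $\bt^p$, and the signed permutations of pairs normalize it, a within-pair swap $\mathcal{F}$ acting by $\mathcal{F}\,\mathrm{diag}(e^{i\theta},e^{-i\theta})\,\mathcal{F}=\mathrm{diag}(e^{-i\theta},e^{i\theta})$, i.e.\ by inverting the phase. This is exactly the semidirect product $\bt^p\rtimes H_p$, the isometry group of $\bt^p$; combining with the $q$-part yields $G\cong(\bt^p\rtimes H_p)\times H_q$, consistently with the known cases $H^+(p,0)$ and $H^+(0,q)=H^+_q$.

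The main obstacle I expect is this last identification — verifying that the phase data $\{\theta_\alpha\}$ and the hyperoctahedral permutation of pairs genuinely assemble into the semidirect product $\bt^p\rtimes H_p$ with the correct action (within-pair swaps inverting the phases, permutations of pairs permuting the circles), rather than merely matching as sets; everything else, namely the monomial structure and the splitting into $p$- and $q$-parts, is formal once the scalar relations are in place.
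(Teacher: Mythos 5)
Your proof is correct, and it reaches the same conclusion as the paper by a Gelfand-dual route. The paper works at the level of the algebra: it first proves, for arbitrary \emph{commuting} partial isometries forming a unitary and satisfying \eqref{ad5}, that the mixed entries $U_{i\alpha,M}$, $U_{M,i\alpha}$ vanish (the projections $U_{i\alpha,M}^*U_{i\alpha,M}$ and $U_{\bar{i}\alpha,M}^*U_{\bar{i}\alpha,M}=U_{i\alpha,M}U_{i\alpha,M}^*$ are equal yet mutually orthogonal by Proposition \ref{ort0}), which splits the commutative quotient into its $p$- and $q$-parts; the $q$-part is then quoted from \cite{hyperoct}, and for the $p$-part an explicit coproduct-preserving isomorphism of the abelianization with $C(\bt^p)\ot C(S_p)\ot C(\bz_2\times\cdots\times\bz_2)$ is written down using characteristic functions. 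You instead enumerate the characters of the abelianization: a scalar partial isometry has modulus $0$ or $1$, unitarity then forces the matrix to be monomial, and \eqref{ad5} forces $\sigma$ to commute with the bar involution and $\lambda_{\bar y}=\overline{\lambda_y}$, after which the splitting into the two parts and the identification of each factor fall out. What your approach buys is a cleaner, essentially computation-free identification of both the underlying set and the semidirect-product structure (the action of $H_p$ on $\bt^p$ by permuting pairs and inverting phases is visible directly from conjugating diagonal monomial matrices, so the detour through $C_{p,0}$ is not even needed); what the paper's approach buys is the operator-level vanishing statement for arbitrary commuting partial isometries, which is the form in which the fact is invoked in the remark following the proposition on proper inclusions of free products. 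Note that your scalar vanishing statement still implies the algebra-level one for the abelianized algebra, since an element of a commutative $C^*$-algebra annihilated by every character is zero, so nothing is lost for the purposes of Theorem \ref{classHpq} itself.
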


\begin{proof}
Observe first that if $\{U_{z,y}:z,y \in \Jnd_{p,q}\}$ are commuting partial isometries which satisfy the condition \eqref{ad5}
and form a unitary matrix, then $U_{i\alpha, M}= U_{M,i\alpha}=0$ for all $i\alpha \in \Jnd_p$ and $M \in \{1, \ldots, q\}$.
Indeed, the projections $U_{i\alpha, M}^* U_{i\alpha, M}$ and $U_{\ib\alpha, M}^* U_{\ib\alpha, M} = U_{i\alpha, M} U_{i\alpha,
M}^*$ are then equal; as due to Proposition \ref{ort0} they are pairwise orthogonal to each other, they must vanish (similar
argument applies to $U_{M, i\alpha}$). Hence to prove the theorem it suffices to consider separately cases $p=0$ and $q=0$. The
first one was treated in \cite{hyperoct}.

Assume then that $q=0$. The result can be deduced from calculations in \cite{Jyot}, but we can also offer an explicit isomorphism.
Denote the images of elements of $\{U_{i\alpha, j\beta}:i\alpha, j \beta \in \Jnd_{p}\}$ in the quotient space (with respect to
the commutator ideal of $\alg_h(p,0)$  by $\{\hat{U}_{i\alpha, j\beta}:i\alpha, j \beta \in \Jnd_{p}\}$ and let the quotient
$C^*$-algebra be denoted by $\alg_h^{\tu{com}} (p,0)$. Identify
 $C( \bt^p \rtimes H_p)$ as a $C^*$-algebra in a natural way with $C(\bt^p) \ot C(S_p) \ot C(\underbrace{\bz_2 \times \cdots \times
\bz_2}_{p \tu{ times}})$. Let  $\chi_{\alpha, \beta}\in C(S_p)$ be the characteristic function of the set of these permutations
which map $\alpha$ into $\beta$ and let $\kappa_{0, \alpha}, \kappa_{1, \alpha} \in C(\bz_2 \times \cdots \times \bz_2)$ be
characteristic functions of sets which have respectively $0$ or $1$ in the $\alpha$-coordinate. One can then check that the map
\[ \hat{U}_{0\alpha, 0 \beta}  \to z_{\alpha} \ot \chi_{\alpha, \beta} \ot \kappa_{0, \alpha},\] and
\[\hat{U}_{0\alpha, 1 \beta} \to  z_{\alpha} \ot \chi_{\alpha, \beta} \ot \kappa_{1, \alpha}\]
extends to an isomorphism of $\alg_h^{\tu{com}} (p,0)$ with $C(\bt^p \rtimes H_p)$; this isomorphism preserves also the respective
coproducts.
\end{proof}

Note that from the point of view of the philosophy presented in \cite{TeoRoland} $H^+(p,0)$ can therefore be viewed as the
liberation of a group of isometries of $p$-copies of the circle. This is consistent with the results of Section 5 of
\cite{groupdual}, where $H^+(p,0)$ was first discovered  as the quantum isometry group of $C(\widehat{\mathbb{F}_p})$ - in other
words, of the `free torus' (note that $C(\widehat{\mathbb{F}_p})$ is the universal $C^*$-algebra generated by $p$ unitaries, whereas
$C(\mathbb{T}^p)$ is the universal $C^*$-algebra generated by $p$ commuting unitaries). Similarly the quantum group $O^+_p$ can be
viewed as the quantum symmetry group of $\sph^{2p}$, the free sphere studied in \cite{TeoDeb}. To strengthen this analogy we
observe the following fact.

\begin{prop}
There is a natural surjection from $C(\sph^{2p})$ to $C(\widehat{\mathbb{F}_p})$.
\end{prop}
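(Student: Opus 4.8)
The plan is to build the surjection directly on generators and let the universal property of the free sphere do the rest. Recall from \cite{TeoDeb} that $C(\sph^{2p})$ is the universal $C^*$-algebra generated by self-adjoint elements $x_1,\ldots,x_{2p}$ subject to the single relation $\sum_{i=1}^{2p} x_i^2 = 1$, whereas $C(\widehat{\mathbb{F}_p})$ is the universal $C^*$-algebra generated by $p$ unitaries $u_1,\ldots,u_p$ with no further relations. Classically the suitably rescaled torus $\{z\in\bc^p : |z_\alpha| = 1/\sqrt{p}\}$ sits inside the usual sphere $\mathcal{S}^{2p}$, and I would simply liberate the corresponding restriction map; this is the source of the word ``natural'' and it dictates the normalisation appearing below.

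For each $\alpha\in\{1,\ldots,p\}$ let $a_\alpha = \tfrac12(u_\alpha + u_\alpha^*)$ and $b_\alpha = \tfrac{1}{2i}(u_\alpha - u_\alpha^*)$ denote the (self-adjoint) real and imaginary parts of the unitary $u_\alpha$. Since $u_\alpha u_\alpha^* = u_\alpha^* u_\alpha = 1$, a direct expansion gives
\[ a_\alpha^2 + b_\alpha^2 = \tfrac12\big(u_\alpha u_\alpha^* + u_\alpha^* u_\alpha\big) = 1. \]
I would then declare
\[ \Phi(x_{2\alpha-1}) = \tfrac{1}{\sqrt{p}}\,a_\alpha, \qquad \Phi(x_{2\alpha}) = \tfrac{1}{\sqrt{p}}\,b_\alpha \qquad (\alpha=1,\ldots,p). \]
The images are self-adjoint and, by the identity above,
\[ \sum_{i=1}^{2p}\Phi(x_i)^2 = \frac{1}{p}\sum_{\alpha=1}^p\big(a_\alpha^2 + b_\alpha^2\big) = \frac{1}{p}\cdot p = 1, \]
so the defining relation of $C(\sph^{2p})$ is satisfied in $C(\widehat{\mathbb{F}_p})$. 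The universal property of $C(\sph^{2p})$ therefore produces a unique unital $*$-homomorphism $\Phi : C(\sph^{2p})\to C(\widehat{\mathbb{F}_p})$ extending these assignments.

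Surjectivity is then immediate: from $u_\alpha = a_\alpha + i b_\alpha = \sqrt{p}\,\big(\Phi(x_{2\alpha-1}) + i\,\Phi(x_{2\alpha})\big)$ every generator $u_\alpha$ lies in the range of $\Phi$, and since the $u_\alpha$ generate $C(\widehat{\mathbb{F}_p})$ the map is onto. I do not expect a genuine obstacle in this statement; the only point needing attention is the normalisation $1/\sqrt{p}$, which is forced precisely by combining the sphere relation $\sum_i x_i^2 = 1$ with $a_\alpha^2 + b_\alpha^2 = 1$. If one wishes to justify the adjective ``natural'' more fully, one can further verify that $\Phi$ intertwines the coaction of $O^+(p,0)$ on $C(\sph^{2p})$ with that of its quantum subgroup $H^+(p,0) = QISO^+(\widehat{\mathbb{F}_p})$ on $C(\widehat{\mathbb{F}_p})$, after matching the pairing index convention of $\Jnd_p$ with the generators of $\mathbb{F}_p$; this equivariance is routine once the two coactions are written out in coordinates, and is the only place where any real work would be needed.
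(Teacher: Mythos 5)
Your proof is correct and is essentially the paper's argument: the map you construct (sending $x_{2\alpha-1}, x_{2\alpha}$ to $\tfrac{1}{2\sqrt{p}}(u_\alpha+u_\alpha^*)$ and $\pm\tfrac{i}{2\sqrt{p}}(u_\alpha-u_\alpha^*)$) is exactly the surjection in the paper, which merely phrases it as the quotient of $C(\sph^{2p})$ by the ideal generated by $x_\alpha y_\alpha - y_\alpha x_\alpha$ and $x_\alpha^2+y_\alpha^2-\tfrac{1}{p}$ followed by the identification of that quotient with $C(\widehat{\mathbb{F}_p})$. Your direct appeal to the universal property of the free sphere, together with the computation $a_\alpha^2+b_\alpha^2=1$, is all that the statement requires.
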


\begin{proof}
Recall that $C(\sph^{2p})$ is the universal $C^*$-algebra generated by selfadjoint operators $\{x_i,y_i:i=1,\ldots,p\}$ such that
$\sum_{i=1}^p x_i^2 + y_i^2 = 1$ and $C^*(\mathbb{F}_p)\approx C(\widehat{\mathbb{F}_p})$  is the universal  $C^*$-algebra
generated by unitaries $\{u_i:i=1,\ldots,p\}$. It suffices to observe that the quotient of $C(\sph^{2p})$ by the closed two-sided
ideal generated by expressions $x_i y_i=y_i x_i, x_i^2+y_i^2 = \frac{1}{p}$ ($i=1,\ldots,p$) is naturally isomorphic to
$C(\widehat{\mathbb{F}_p})$ -- the isomorphism maps images of $x_i$ and $y_i$ in the quotient respectively to
$\frac{1}{2\sqrt{p}} (u_i + u_i^*)$ and $\frac{i}{2\sqrt{p}} (u_i - u_i^*)$.
\end{proof}

\section{Quantum group $H_s^+(p,q)$}

It is natural to ask whether the two-parameter families studied in this paper in a sense exhaust all possible natural choices in
the `$F_{p,q}$' framework. When we studied $A_h(p,q)$ and $A_s(p,q)$ we required the entries of the fundamental corepresentation
$U$ to satisfy respectively the conditions of the type $x=x x^* x$ and $x=x^*x$, which have natural descriptions in terms of the
intertwiners between $U$ and its tensor powers (see Proposition \ref{HomSpq} and \ref{HomHpq}). We could equally well describe in
terms of the intertwiners conditions on the entries of the type say $x=xx^*xx^*$. These introduce only one new possibility, as
explained by the next proposition.

\begin{prop}
Let $x$ be a bounded operator on a Hilbert space, $k\in \bn$. If $x= (x^*x)^k$ or $x = (xx^*)^k$ then $x$ is an orthogonal
projection. If $x=x(x^*x)^k$, then $x$ is a partial isometry. If $x=x^* (xx^*)^k$, then $x$ is a partial symmetry (a selfadjoint
partial isometry).
\end{prop}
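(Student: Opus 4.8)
The plan is to prove each of the four implications by spectral/functional-calculus reasoning applied to the positive operators $x^*x$ and $xx^*$. The unifying observation is that in every case the hypothesis is an operator equation forcing $x^*x$ to satisfy a polynomial identity, which pins its spectrum down to $\{0,1\}$ and hence makes it a projection; partial-isometry or partial-symmetry structure of $x$ then follows.

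First I would dispose of the case $x=(x^*x)^k$. Here $x$ is automatically selfadjoint (the right-hand side is), so $x=x^*$ and thus $x^*x=x^2$. Substituting gives $x=(x^2)^k=x^{2k}$, so the selfadjoint operator $x$ satisfies $x=x^{2k}$; by the continuous functional calculus its spectrum is contained in the set of real $t$ with $t=t^{2k}$, namely $\{0,1\}$, whence $x$ is an orthogonal projection. The case $x=(xx^*)^k$ is identical after replacing $x$ by $x^*$ (or by symmetry), again forcing $x$ selfadjoint and $x=x^{2k}$.

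Next I would treat $x=x(x^*x)^k$. Write $P=x^*x\geq 0$. Multiplying the hypothesis on the left by $x^*$ gives $P=x^*x=x^*x(x^*x)^k=P\cdot P^k=P^{k+1}$, so the positive operator $P$ satisfies $P=P^{k+1}$; by functional calculus its spectrum lies in $\{t\geq 0:t=t^{k+1}\}=\{0,1\}$, so $P=x^*x$ is a projection and $x$ is a partial isometry. For the last case $x=x^*(xx^*)^k$, set $Q=xx^*$. Then $x$ is selfadjoint is not yet clear, but multiplying the relation $x=x^*(xx^*)^k$ on the left by $x$ gives $x^2 = x x^*(xx^*)^k = Q^{k+1}$, which is selfadjoint; more directly, taking adjoints of $x=x^*(xx^*)^k$ and using selfadjointness of $xx^*$ yields $x^*=x(xx^*)^k$, and comparing with the hypothesis after suitable multiplication shows $x=x^*$. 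Once $x=x^*$, the relation reads $x=x\cdot x^{2k}=x^{2k+1}$, so the spectrum of the selfadjoint $x$ lies in $\{t\in\mathbb R:t=t^{2k+1}\}=\{-1,0,1\}$; thus $x$ is a selfadjoint partial isometry, i.e.\ a partial symmetry.

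The only genuinely delicate point is the last case, where establishing $x=x^*$ from $x=x^*(xx^*)^k$ requires a small manipulation rather than being immediate; the cleanest route is to show $x$ and $x^*$ satisfy the same polynomial relation in $xx^*$ and then identify them. Once selfadjointness is in hand, every case reduces to a one-variable functional-calculus computation of the spectrum, which I expect to be entirely routine.
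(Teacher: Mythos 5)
Your treatment of the first three identities is correct and routine, and these are exactly the cases the paper leaves to the reader. The genuine gap is in the fourth case, the only one the paper actually proves, and it occurs precisely at the step you yourself flag as delicate. Taking adjoints of $x=x^*(xx^*)^k$ gives $x^*=(xx^*)^kx=x(x^*x)^k$, \emph{not} $x(xx^*)^k$: commuting $x$ past $xx^*$ turns it into $x^*x$, and these differ unless $x$ is already known to be normal, which is the very point at issue. Consequently ``comparing with the hypothesis after suitable multiplication shows $x=x^*$'' is not a proof, and the fallback suggestion that $x$ and $x^*$ satisfy the same polynomial relation and can therefore be identified is not a valid principle (two distinct operators can satisfy the same relation). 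Everything after ``Once $x=x^*$'' is fine, but selfadjointness is exactly what has not been established.

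The missing idea, which is the content of the paper's proof, is to establish \emph{normality} first. From $x=x^*(xx^*)^k$ and its adjoint $x^*=x(x^*x)^k$ one computes $x^2=x\cdot x^*(xx^*)^k=(xx^*)^{k+1}$ and $(x^*)^2=x^*\cdot x(x^*x)^k=(x^*x)^{k+1}$; taking the adjoint of the second identity and comparing with the first yields $(xx^*)^{k+1}=(x^*x)^{k+1}$, and since a positive operator has a unique positive $(k+1)$-st root this forces $xx^*=x^*x$. Only now may one invoke the spectral theorem, applied to the \emph{normal} operator $x$, where the hypothesis becomes $z=\bar z\,|z|^{2k}$ pointwise on the spectrum; taking moduli gives $|z|\in\{0,1\}$, and $|z|=1$ forces $z=\bar z$, so the spectrum lies in $\{-1,0,1\}$ and $x$ is a selfadjoint partial isometry. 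In short, you cannot reduce to a one-variable real functional calculus until normality is proved, and your route to selfadjointness does not go through.
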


\begin{proof}
We only prove the last statement, leaving the first two to the reader. Suppose $x=x^* (xx^*)^k$. Then $x^*=x (x^*x)^k$, so   $x^2
= (xx^*)^{k+1}$ and $(x^*)^2 = (x^*x)^{k+1}$. Taking the adjoint of the last relation we obtain that  $(x^*x)^{k+1} =
(xx^*)^{k+1}$. As both $xx^*$ and $x^*x$ are positive, the last equality holds if and only if $x$ is normal. Now it remains to
observe that for $z \in \bc$ the equality $z= \bar{z} |z|^k$ holds if and only if $z\in\{-1,0,1\}$ and apply the spectral theorem
to end the proof.
\end{proof}

The above proposition motivates the next definition.

\begin{deft}\label{Hspq}
The algebra $A_{hs}(p,q)$ is the universal $C^*$-algebra generated by partial symmetries $\{U_{z,y}:z,y \in \Jnd_{p+q}\}$ which satisfy
all the relations required of generators of $A_o(p,q)$.
\end{deft}

\begin{prop}\label{directHspq}
The algebra $A_{hs}(p,q)$ is the universal $C^*$-algebra generated by two families of partial symmetries $\{U_{i\alpha,j\beta}:i\alpha,
j\beta \in\Jnd_p\}$ and $\{U_{M,N};M,N \in \{1, \ldots,q\}\}$, such that both matrices $(U_{i\alpha,j\beta})_{i\alpha, j\beta
\in\Jnd_p}$ and $(U_{M,N})_{M,N=1}^q$ are magic unitaries and moreover
\begin{equation} U_{i \alpha, j \beta} = U_{\bar{i}
\alpha, \bar{j} \beta}, \;\;\; i\alpha, j\beta \in \Jnd_p. \label{adjointhsp}\end{equation}
\end{prop}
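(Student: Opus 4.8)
The plan is to mirror the proof of Proposition~\ref{directspq}, substituting partial symmetries for projections and invoking Proposition~\ref{ort0} in place of the automatic orthogonality enjoyed by magic-unitary entries. As there, everything reduces to showing that the mixed entries vanish, namely that $U_{i\alpha,M}=U_{M,i\alpha}=0$ for all $i\alpha\in\Jnd_p$ and $M\in\{1,\ldots,q\}$; once this is in hand the generating unitary becomes block-diagonal and the two diagonal blocks can be read off directly.

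First I would use that a partial symmetry is selfadjoint, so that relations \eqref{ad1}--\eqref{ad4} acquire the form $U_{z,y}=U_{\bar z,\bar y}$; in particular $U_{i\alpha,M}=U_{\bar i\alpha,M}$ and $U_{M,i\alpha}=U_{M,\bar i\alpha}$. Since the full matrix is unitary with partial-isometry entries, Proposition~\ref{ort0} yields $U_{\bar i\alpha,M}\,U_{i\alpha,M}^{*}=0$, and combining this with the equality $U_{i\alpha,M}=U_{\bar i\alpha,M}$ and selfadjointness gives $U_{i\alpha,M}^{2}=0$. Because $U_{i\alpha,M}^{2}=U_{i\alpha,M}^{*}U_{i\alpha,M}$ is the initial projection of a partial isometry, its vanishing forces $U_{i\alpha,M}=0$; the symmetric argument using the row-orthogonality half of Proposition~\ref{ort0} gives $U_{M,i\alpha}=0$.

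With the mixed entries eliminated the generating unitary is $\mathrm{diag}(U_1,U_2)$, where $U_1=(U_{i\alpha,j\beta})_{i\alpha,j\beta\in\Jnd_p}$ has size $2p$ and $U_2=(U_{M,N})_{M,N=1}^{q}$ has size $q$; each block is then unitary in its own right. Their entries are partial symmetries and, by Proposition~\ref{ort0}, products of distinct entries lying in a common row or column vanish, so each block is a cubic unitary; the relation \eqref{adjointhsp} is precisely \eqref{ad1} rewritten for selfadjoint entries. To complete the identification I would verify the converse: given two families of partial symmetries whose blocks are cubic unitaries and which satisfy \eqref{adjointhsp}, the block-diagonal matrix $\mathrm{diag}(U_1,U_2)$ satisfies all of \eqref{ad1}--\eqref{ad4} (the mixed relations holding trivially, both sides being $0$), so the two universal $C^*$-algebras dominate one another and are therefore isomorphic, compatibly with the fundamental corepresentations.

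The single new ingredient relative to Proposition~\ref{directspq} is that orthogonality of the entries is no longer built into the definition but must be supplied by Proposition~\ref{ort0}, and the deduction ``$x^{2}=0\Rightarrow x=0$'' has to be justified from the partial-isometry relation $x=xx^{*}x$ rather than from idempotency. I expect this transition from the implicit orthogonality of magic unitaries to the explicit orthogonality of Proposition~\ref{ort0} to be the main, if modest, obstacle.
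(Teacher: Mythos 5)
Your proof is correct and follows the same route as the paper's: reduce everything to showing that the mixed entries vanish, combine Proposition~\ref{ort0} with the identity $U_{i\alpha,M}=U_{\bar{i}\alpha,M}$ (coming from \eqref{ad2} and selfadjointness of partial symmetries) to obtain $(U_{i\alpha,M})^2=0$, and conclude $U_{i\alpha,M}=0$. You merely make explicit two steps the paper leaves implicit, namely the deduction $x^2=0\Rightarrow x=0$ from $x=xx^*x$ and the converse universality check for the block-diagonal matrix.
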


\begin{proof}
It suffices to show that whenever $i\alpha \in \Jnd_p$ and $M \in \{1, \ldots,q\}$ then $U_{i \alpha, M} = 0 = U_{M, i\alpha}$.
Due to Proposition \ref{ort0} we have  $(U_{i \alpha, M})^2 =0 = (U_{M, i\alpha})^2= 0$, and the proof is finished.
\end{proof}

Before we formulate the main result we need to recall the definition of a `higher-order' quantum hyperoctahedral
group first defined in \cite{hyperoct} and later studied for example in \cite{stocheasy} and \cite{Bessel}.

\begin{deft} \label{H4pq}
Denote by  $A_h^{(4)}(n)$ the universal $C^*$-algebra generated by the entries of an $n$ by $n$ unitary $U$, such that $\bar{U}$
is also unitary, for each $i,j=1,\ldots, n$ the entry $U_{ij}$ is normal, and $U_{ij}^4 =U_{ij} U_{ij}^*$ is an orthogonal
projection. When $U \in M_n \ot A_s(n)$ is interpreted as the fundamental unitary corepresentation, we  view $A_h^{(4)}(n)$ as the
algebra of continuous functions on the quantum hyperoctahedral group of order 4 on $n$ coordinates,  $H^{4+}_n$.
\end{deft}

Note that if in the above definition $4$ is replaced by $2$ we obtain the quantum hyperoctahedral group $H^+_n$ introduced in
Definition \ref{unihyp}. The factors 2 and 4 can be replaced by arbitrary $s \in \bn$, as shown in the papers cited above.

\begin{tw} \label{thHspq}
The algebra $A_{hs}(p,q)$ is the algebra of continuous functions on a compact quantum group, denoted further $H_s^+(p,q)$. The unitary
$U=(U_{z,y})_{ z,y \in \Jnd_{p,q}} \in M_{2p+q} \ot A_{hs}(p,q)$ is the fundamental representation of $S^+(p,q)$. The algebra
$A_{hs}(p,q)$ is isomorphic to the free product $A_{hs}(p,0) \star A_h(q)$; on the level of quantum groups we have
$H_s^+(p,q)\approx H^{4+}_p \, \hat{\star} \, H^+_q$.
\end{tw}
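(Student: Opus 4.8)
The plan is to follow the blueprint of Theorem~\ref{thSpq}, replacing ``projection'' by ``partial symmetry'' throughout and replacing the role played there by $A_h(p)$ with that of $A_h^{(4)}(p)$. First I would check that $A_{hs}(p,q)$ carries a compact quantum group structure with $U$ as fundamental corepresentation. As in Theorem~\ref{thSpq} it suffices to see that the candidate comultiplication $\Com(U_{z,y})=\sum_{x\in\Jnd_{p,q}}U_{z,x}\ot U_{x,y}$ sends generators to partial symmetries. Writing $\tilde U_{z,y}=\sum_x U_{z,x}\ot U_{x,y}$, selfadjointness is immediate from \eqref{ad5}, while Corollary~\ref{corort} annihilates all cross terms, so that $\tilde U_{z,y}\tilde U_{z,y}^*=\sum_x Q_{z,x}\ot Q_{x,y}$ (range projections) and hence $\tilde U_{z,y}\tilde U_{z,y}^*\tilde U_{z,y}=\sum_x U_{z,x}\ot U_{x,y}=\tilde U_{z,y}$. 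Since a $^*$-homomorphic image and the adjoint of a partial symmetry is again a partial symmetry, universality produces $\Com$, the counit and the antipode. Proposition~\ref{directHspq} then forces the off-diagonal blocks to vanish, giving $A_{hs}(p,q)\cong A_{hs}(p,0)\star A_{hs}(0,q)$, so that it remains to treat the two extreme cases.

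For the $q$-part the entries $U_{M,N}$ are partial symmetries, hence selfadjoint by \eqref{ad4}, and Corollary~\ref{corort} gives $U_{M,N}U_{M,N'}=U_{M,N}U_{M',N}=0$ whenever $N\neq N'$ or $M\neq M'$. Thus $(U_{M,N})_{M,N=1}^{q}$ is a cubic unitary and $A_{hs}(0,q)\cong A_h(q)=C(H^+_q)$, exactly as in the identification $H^+(0,q)\approx H^+_q$.

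The heart of the matter is the $p$-part, i.e. the claim $A_{hs}(p,0)\cong A_h^{(4)}(p)$, and this is where I expect the real work. Setting $A_{\alpha\beta}=U_{0\alpha,0\beta}$ and $B_{\alpha\beta}=U_{0\alpha,1\beta}$, relation \eqref{ad5} puts $U$ into the sudoku block form $\bigl(\begin{smallmatrix}A&B\\ B&A\end{smallmatrix}\bigr)$ with $A,B$ partial symmetries, and Corollary~\ref{corort} yields $A_{\alpha\gamma}B_{\alpha\gamma'}=0$ for all $\gamma,\gamma'$. The natural candidates for the order-four generators are $W_{\alpha\beta}=A_{\alpha\beta}+iB_{\alpha\beta}$: a direct computation using these orthogonality relations shows that each $W_{\alpha\beta}$ is normal, that $W_{\alpha\beta}^2=A_{\alpha\beta}^2-B_{\alpha\beta}^2$ and that $W_{\alpha\beta}^4=W_{\alpha\beta}W_{\alpha\beta}^*=A_{\alpha\beta}^2+B_{\alpha\beta}^2$ is the range projection, which are precisely the entrywise relations of Definition~\ref{H4pq}. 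To finish I would match the two universal properties, either by invoking a sudoku-type description of $H^{4+}_p$ (in the spirit of Definition~5.2 and Theorem~6.2 of \cite{hyperoct}, cf.\ also \cite{stocheasy}) obtained from the above reorganization after the same row/column permutation as in Theorem~\ref{thSpq}, or by the Tannakian/Peter--Weyl comparison used in Theorems~\ref{catSp} and~\ref{freep}: one establishes the comparison inclusion and then checks that the fixed-point dimensions $d_k=\dim\Hom(1;U^{\ot k})$ agree, the character of $U$ following the relevant free Bessel law of \cite{stocheasy},~\cite{Bessel}. The main obstacle is exactly this last identification --- verifying that the order-four data on the $W_{\alpha\beta}$ assembles, together with unitarity of the $2p$ by $2p$ matrix, into the defining data of $A_h^{(4)}(p)$ while respecting the coproduct; the moment/dimension count is the cleanest route since it bypasses the construction of an explicit inverse.

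Combining the three steps gives $A_{hs}(p,q)\cong A_h^{(4)}(p)\star A_h(q)$, and translating to the dual language via the free product construction of \cite{Wangfree} yields $H_s^+(p,q)\approx H^{4+}_p\,\hat{\star}\,H^+_q$, as required.
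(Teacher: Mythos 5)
Your overall architecture matches the paper's: verify the quantum group structure, reduce via Proposition \ref{directHspq} to the cases $p=0$ and $q=0$, identify $A_{hs}(0,q)$ with $A_h(q)$, and concentrate on $A_{hs}(p,0)$. Where you genuinely diverge is in the key identification $A_{hs}(p,0)\cong A_h^{(4)}(p)$: you pass to the $p\times p$ matrix $W_{\alpha\beta}=A_{\alpha\beta}+iB_{\alpha\beta}$ and aim to verify Definition \ref{H4pq} directly (this is the $p$-dimensional picture of \cite{Bessel}), whereas the paper splits each partial symmetry into its positive and negative parts, assembles a $4p\times 4p$ ``double sudoku'' magic unitary, and quotes Theorem 2.3\,(2) of \cite{TeoRolandV}. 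Your entrywise computations are correct ($W$ normal, $W^4=A^2+B^2=WW^*$ an orthogonal projection), and your route is more self-contained, while the paper's buys the hard equivalence of presentations from an existing result.

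The one step you pass over that does not follow formally is the (bi)unitarity of $W$. Expanding $(WW^*)_{\alpha\beta}=\sum_\gamma(A_{\alpha\gamma}+iB_{\alpha\gamma})(A_{\beta\gamma}-iB_{\beta\gamma})$, the unitarity of the sudoku matrix yields $\sum_\gamma(A_{\alpha\gamma}A_{\beta\gamma}+B_{\alpha\gamma}B_{\beta\gamma})=\delta_{\alpha\beta}$ and $\sum_\gamma(A_{\alpha\gamma}B_{\beta\gamma}+B_{\alpha\gamma}A_{\beta\gamma})=0$, but the imaginary part of $(WW^*)_{\alpha\beta}$ requires $\sum_\gamma(B_{\alpha\gamma}A_{\beta\gamma}-A_{\alpha\gamma}B_{\beta\gamma})=0$, which is a different relation in a noncommutative algebra. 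It can be rescued: for $\alpha=\beta$ the products $A_{\alpha\gamma}B_{\alpha\gamma}$ vanish by the row orthogonality of Corollary \ref{corort}, and for $\alpha\neq\beta$ the sudoku identity $B_{\beta\gamma}=U_{1\beta,0\gamma}$ places $A_{\alpha\gamma}=U_{0\alpha,0\gamma}$ and $B_{\beta\gamma}$ in the same column, so all cross products vanish term by term; a symmetric argument handles the converse direction, which is needed to see that no relations are gained in passing between the two universal algebras. This verification (or the moment count you offer as a fallback, which is indeed how the paper treats the corresponding categories in Theorem \ref{catHsp}) is precisely the content of the step you label ``the main obstacle''; as written, the identification of the two presentations is asserted rather than proved.
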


\begin{proof}
By Proposition \ref{directHspq} it suffices to consider separately the cases $p=0$ and $q=0$. The fact that $A_{hs}(0,q) \approx
A_h(q)$ (with the isomorphism preserving natural fundamental corepresentations) is immediate. Assume then that $q=0$. Note that
the fundamental corepresentation has then the form
\begin{equation} \label{fundhs} \begin{bmatrix} p_{A} - q_A & p_B - q_B & p_C - q_C &p_D - q_D & \cdots\\
 p_{B} - q_B & p_A - q_A & p_D - q_D &p_C - q_C & \cdots \\ \vdots & \vdots &\vdots & \vdots & \ddots \end{bmatrix},\end{equation}
where for each $X \in \{A,B,C,D,\ldots\}$ the pair $(p_X,q_X)$ is a pair of mutually orthogonal projections.

Following the permutation procedure described in the proof of Theorem \ref{thSpq} we can describe the fundamental representation
as a `sudoku' unitary, whose entries are partial symmetries; and further considering separately positive and negative part of
each partial symmetry express the relations between them by placing them in a `double sudoku', i.e.\ a magic unitary of dimension
$4p$ by $4p$, which has the following block-matrix structure:
\begin{equation} \label{fundhs2} \begin{bmatrix} P & Q& R &S \\ Q & P &S &R \\ R&S & P&Q \\ S&R & Q& P\end{bmatrix},\end{equation}
with $P,Q,R,S$ being $p$ by $p$ matrices whose entries are projections. Note that this fits with the fact that $H^+_s(p,0)$ is a
quantum subgroup of $H^+_{2p}$. We can now apply Theorem 2.3 (2) of \cite{TeoRolandV} to deduce that the algebra $A_{hs}(p,0)$ is
isomorphic to $C(H^{4+}_p)$; one can check that this isomorphism is also a Hopf $^*$-algebra morphism.
\end{proof}

Note that the fundamental corepresentation of $C(H^{4+}_p)$ in \eqref{fundhs} is in a sense intermediate between that of
\cite{Bessel} (which has dimension $p$ and is given by the matrix with entries of the type $p_A - q_A + i (p_B - q_B)$) and that
given in \eqref{fundhs2} considered in \cite{TeoRolandV} (which has dimension $4p$). Once again we describe the quantum group
studied in terms of the intertwiners between the tensor powers of the fundamental corepresentation.

\begin{prop} \label{HomHspq}
The algebra $A_{hs}(p,q)$ is the universal $C^*$-algebra generated by the entries of a unitary $2p+q$ by $2p+q$ matrix $U$ such that the
vector $\xi$ defined in Proposition \ref{HomOpq} is a fixed vector for $U^{\ot 2}$ and the map $e_{i\alpha} \to e_{\bar{i}\alpha} \ot
e_{i\alpha}\ot e_{\bar{i}\alpha}$, $e_M \to e_{M} \ot e_{M} \ot e_M$ defines a morphism in $\Hom (U; U^{\ot 3})$.
\end{prop}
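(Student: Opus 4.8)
The plan is to mimic the proofs of Proposition \ref{HomSpq} and Proposition \ref{HomHpq}, since the statement to be proved is the exact analogue for $A_{hs}(p,q)$ of those earlier intertwiner descriptions, with the single defining relation $x = x^*(xx^*)^k$ (for $k=1$) expressing that each entry is a partial symmetry. First I would note that by Proposition \ref{HomOpq} the condition that $\xi$ is fixed by $U^{\ot 2}$ is equivalent to the relations \eqref{ad1}--\eqref{ad4}; so throughout the argument those adjoint relations may be assumed. It then remains to translate the second condition -- that the map $e_{i\alpha}\to e_{\bar{i}\alpha}\ot e_{i\alpha}\ot e_{\bar{i}\alpha}$, $e_M \to e_M \ot e_M \ot e_M$ lies in $\Hom(U;U^{\ot 3})$ -- into concrete relations among the generators, exactly as was done in the earlier propositions.

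The computational heart is to spell out the intertwining identity $U^{\ot 3}\circ \wt{T} = \wt{T}\circ U$ for the map $\wt{T}$ described above and read off what it says entrywise. Following the model of Proposition \ref{HomHpq}, I would obtain, for each $i\alpha,j\beta\in\Jnd_p$ and $M,N\in\{1,\ldots,q\}$ and each pair of indices $y,x\in\Jnd_{p,q}$, relations of the shape
\[ U_{\bar{j}\beta,\overline{i}\alpha}\, U_{y, i\alpha}\, U_{x,\overline{i}\alpha} = \delta_y^{\,j\beta}\,\delta_x^{\,\bar{j}\beta}\, U_{\bar{j}\beta, \overline{i}\alpha},\]
together with the three companion identities involving the $q$-part indices, obtained by the same bookkeeping used in Propositions \ref{HomSpq} and \ref{HomHpq}. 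The only genuine difference from the partial-isometry case is the placement of the bars, reflecting that the middle tensor leg carries $e_{i\alpha}$ while the outer legs carry $e_{\bar{i}\alpha}$; this is precisely what encodes the relation $x=x^*(xx^*)^k$ (with $k=1$) rather than $x=x(x^*x)^k$. Combining these relations with \eqref{ad1}--\eqref{ad4} should yield, after applying \eqref{ad5} to convert the barred entries into adjoints, identities of the form $U_{z,y} = U_{z,y}^*\,U_{z,y}\,U_{z,y}^*$ for all $z,y\in\Jnd_{p,q}$, which is exactly the statement that every generator is a partial symmetry. Conversely, one checks directly that if $U$ is a unitary whose entries satisfy \eqref{ad1}--\eqref{ad4} and are partial symmetries, then all the displayed intertwiner relations hold automatically, invoking Corollary \ref{corort} for the vanishing of the cross terms as in the earlier arguments.

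The main obstacle I anticipate is purely combinatorial rather than conceptual: correctly tracking the bar pattern $e_{\bar{i}\alpha}\ot e_{i\alpha}\ot e_{\bar{i}\alpha}$ through the summations, so that the deltas $\delta_y^{\,j\beta}\delta_x^{\,\bar{j}\beta}$ and the conjugations land in the right positions to reproduce $x=x^*xx^*$ rather than $x=xx^*x$. Because Proposition \ref{HomHpq} already carries out the strictly analogous computation for the symmetric bar pattern, I would present the proof by stating that it is \emph{mutatis mutandis} the same as that of Proposition \ref{HomHpq}, displaying only the four modified entrywise relations and noting that \eqref{ad5} converts them into the partial-symmetry condition; this keeps the exposition short while making transparent where the asymmetric bar placement does its work.
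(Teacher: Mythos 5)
Your overall strategy is exactly the paper's: the paper's proof of this proposition is the single line ``Identical to that of Proposition \ref{HomHpq}'', so mimicking that computation with the modified bar pattern is the intended argument. However, the one relation you chose to display is not quite right, and the error sits precisely at the point you yourself identify as the crux. Writing out $(\wt{T}\ot 1)\circ U = U^{\ot 3}\circ(\wt{T}\ot 1)$ for the map $e_{i\alpha}\mapsto e_{\bar{i}\alpha}\ot e_{i\alpha}\ot e_{\bar{i}\alpha}$ and comparing coefficients gives
\[ U_{\bar{j}\beta,\bar{i}\alpha}\,U_{y,i\alpha}\,U_{x,\bar{i}\alpha}=\delta_y^{\,j\beta}\,\delta_x^{\,\bar{j}\beta}\,U_{j\beta,i\alpha},\]
i.e.\ the right-hand side carries the \emph{unbarred} entry $U_{j\beta,i\alpha}$, not $U_{\bar{j}\beta,\bar{i}\alpha}$ as in your display. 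The distinction matters: setting $a=U_{j\beta,i\alpha}$, so that $a^*=U_{\bar{j}\beta,\bar{i}\alpha}$ by \eqref{ad5}, the correct relation at $y=j\beta$, $x=\bar{j}\beta$ reads $a^*aa^*=a$, which is the partial symmetry condition $x=x^*(xx^*)$, whereas your version reads $a^*aa^*=a^*$, which upon taking adjoints is the partial isometry condition $a=aa^*a$. As written, your displayed relation would therefore reproduce the characterization of $A_h(p,q)$ from Proposition \ref{HomHpq} rather than the partial-symmetry relation defining $A_{hs}(p,q)$. Since your stated target identity $U_{z,y}=U_{z,y}^*U_{z,y}U_{z,y}^*$ and the rest of the plan (including the converse direction via Corollary \ref{corort}) are correct, this is a repairable bookkeeping slip rather than a conceptual gap, but you should correct the display before asserting that it ``encodes $x=x^*(xx^*)^k$''.
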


\begin{proof}
Identical to that of Proposition \ref{HomHpq}.
\end{proof}

It remains to describe the category describing the representations of $H_s^+(p,q)$. Once again we will first separately consider
the case $p=0$ (when everything reduces to the usual computations with $H^+_q$ and the corresponding category is
$NC_{\tu{even}}$) and the case $q=0$. For the second case recall Definition \ref{bulletcategory} and denote by $NC_{\bullet,
\tu{even}}$ the subcategory given by the bulleted partitions of even size.

\begin{tw} \label{catHsp}
Let $U$ denote the fundamental representation of $H^+_s(p,0)$ introduced in Definition \ref{Hspq}.  We have
$$\Hom(U^{\otimes k};U^{\otimes l})=\tu{span}(\wt{T}_\pi|\,\pi\in NC_{\bullet, \tu{even}}(k,l)),$$
where the implementing maps are defined as in \eqref{wTpi}.
\end{tw}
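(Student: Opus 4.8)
The plan is to follow closely the template of the proof of Theorem \ref{catSp}, replacing the free Poisson computation by the analogous one for $H^{4+}_p$. First I would check that $NC_{\bullet, \tu{even}}$ is a tensor category in the sense of Woronowicz and that the assignment $\pi \mapsto \wt{T}_\pi$ of \eqref{wTpi} is a monoidal functor: tensoring, colour-matched concatenation with deletion of closed blocks, and reflection of partitions go over to the tensor product, composition and adjoint of the associated operators. This is routine, exactly as for $NC_\bullet$ in Theorem \ref{catSp}, the only extra bookkeeping being to verify that the even-block condition is preserved by the categorical operations (it is, since concatenation merges blocks and deletes closed ones, all of even size). By the Tannaka--Krein duality of \cite{wor2} the spaces $\tu{span}(\wt{T}_\pi : \pi \in NC_{\bullet, \tu{even}}(k,l))$ are the intertwiner spaces of a compact quantum group $\QG$, and there is an inclusion $\QG \supset H^+_s(p,0)$ provided all the generating morphisms of the latter lie in this span.

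Next I would read off the generators of the representation category of $H^+_s(p,0)$ from Proposition \ref{HomHspq}: the fixed vector $\xi$, which is $\wt{T}$ of a single two-leg block with conjugate (opposite-colour) endpoints, and the map $e_{i\alpha} \mapsto e_{\bar{i}\alpha} \ot e_{i\alpha} \ot e_{\bar{i}\alpha}$ in $\Hom(U; U^{\ot 3})$, which is $\wt{T}$ of a block with one upper and three lower points carrying the alternating colour pattern. Both are even-block bulleted partitions, so the inclusion $\QG \supset H^+_s(p,0)$ indeed holds. I would then show that these two partitions generate all of $NC_{\bullet, \tu{even}}$: as in Theorem \ref{catSp} they first combine to produce the colour-exchange partition $\pi_3$ (available because the entries $U_{z,y}$, being partial symmetries, are self-adjoint), after which one argues exactly as in the non-bulleted case that every even-block noncrossing shape is obtained, with $\pi_3$ allowing arbitrary recolouring at each vertex up to the per-block mirror identification.

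It then remains, by the standard Peter--Weyl argument, to verify that $d_k := \card(\Hom(1; U^{\ot k}))$ equals $\card(NC_{\bullet, \tu{even}}(0,k))$ for every $k$. Using the isomorphism $H^+_s(p,0) \approx H^{4+}_p$ of Theorem \ref{thHspq}, the character of $U$ is $\chi = 2Y$, where $Y = \sum_{\alpha} U_{0\alpha, 0\alpha}$ is the trace of the diagonal block of partial symmetries (here $U_{1\alpha,1\alpha}=U_{0\alpha,0\alpha}$ by \eqref{adjointhsp}). The exact law of $Y$ under the Haar state of $H^{4+}_p$ is, by the results of \cite{stocheasy} (compare Theorem 7.2 and Corollary 7.4 there, used already in the proof of Theorem \ref{catSp}), the symmetric free compound Poisson law of rate $\tfrac12$ with $\pm 1$ jumps, whose free cumulants are $\kappa_{2m}(Y) = \tfrac12$ and whose odd cumulants vanish. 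Hence $\kappa_{2m}(\chi) = 2^{2m-1}$ and the odd cumulants of $\chi$ vanish, so the moment--cumulant formula \eqref{momcum} gives
\[ d_k = \sum_{\pi \in NC_{\tu{even}}(0,k)} \;\prod_{b: \tu{block in } \pi} 2^{\card(b)-1}. \]
Finally I would match this with the partition count: each even-block noncrossing partition of $k$ points admits, for a block with $\card(b)$ legs, exactly $2^{\card(b)-1}$ inequivalent bulletings after the per-block mirror identification, so the right-hand side is precisely $\card(NC_{\bullet, \tu{even}}(0,k))$, completing the proof.

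The main obstacle I expect is the exact, finite-$p$ identification of the law of the diagonal variable $Y$ for $H^{4+}_p$: unlike the asymptotic Bessel-law statements of \cite{Bessel}, I need the precise cumulants $\kappa_{2m}=\tfrac12$ (with vanishing odd cumulants) at fixed $p$, and it is exactly this distributional input that forces the count to come out right. Everything else — the monoidality of $\wt{T}$, the generation of $NC_{\bullet, \tu{even}}$ from the two morphisms of Proposition \ref{HomHspq}, and the bulleting count — is combinatorially routine once this input is in place.
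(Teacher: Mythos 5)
Your proposal is correct and follows essentially the same route as the paper's proof: Tannakian setup for $NC_{\bullet,\tu{even}}$, generation of the category from the two morphisms of Proposition \ref{HomHspq} via the exchange partition $\pi_3$, and a Peter--Weyl dimension count resting on the identification $H^+_s(p,0)\approx H^{4+}_p$ and the exact finite-$p$ law of the truncated character from \cite{stocheasy}/\cite{Bessel}. The only (harmless) difference is that you phrase the final count through the free cumulants $\kappa_{2m}(\chi)=2^{2m-1}$ and the moment--cumulant formula, whereas the paper works directly with the moments of the free Bessel law $(2,\tfrac12)$; the two computations are equivalent.
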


\begin{proof} The proof is very similar to that of Theorem \ref{catSp}.
It is standard to check that $NC_{\bullet, \tu{even}}$ is a tensor category satisfying the properties needed to apply the Tannaka-Krein
duality of \cite{wor2} and that the map $\pi \to \wt{T}_{\pi}$ transforms the natural operations (concatenation, tensoring, adjoint)
to the corresponding operations on the level of linear maps. Theorem \ref{HomHspq} implies that the category of representations
of $S^+(p,0)$ is generated by the morphisms $\wt{T}_{\pi_1}$ and $\wt{T}_{\pi_2}$ associated with the partitions
\[ \pi_1= {\raisebox{10pt}{\xymatrix@R=0.3pc @C=0.3pc{ & & \\ & *=0{} &
\\ *=0{\bullet} \ar@{-}[ur] & & *=0{\circ} \ar@{-}[ul]}}}, \;\;\; \pi_2 = {\raisebox{10pt}{\xymatrix@R=0.3pc @C=0.3pc{  & *=0{\bullet} \ar@{-}[dd] & \\ & &
\\ *=0{\circ} \ar@{-}[uur] &*=0{\bullet} & *=0{\circ} \ar@{-}[uul]}}} .\]
The partitions $\pi_1$ and $\pi_2$ generate the whole $NC_{\bullet, \tu{even}}$. Indeed,  the analogous `non-bulleted'
partitions generate $NC_{\tu{even}}$, and it suffices to notice that $\pi_1$ and $\pi_2$ can be composed to obtain
the `exchange' partition $\pi_3= {\raisebox{5pt}{\xymatrix@R=0.7pc @C=0.7pc{*=0{\bullet} \ar@{-}[d] \\
 *=0{\circ}}}}\,$.  The generation statement is now clear.

The argument above shows that the category of the representations of $H^+_s(p,0)$ is at least as big as $NC_{\bullet,
\tu{even}}$; hence there is an inclusion  $\QG \supset S^+(p,0)$, where $\QG$ is the compact quantum group arising as the
Tannaka-Krein dual of $NC_{\bullet, \tu{even}}$ via the described implementation. To conclude the proof it suffices to show that
for each $k \in \bn$ the number $d_k:=\card(\Hom(1; U^{\ot 2k}))$ is equal to $\card(NC_{\bullet, \tu{even}}(0,2k))$. The numbers
$d_k$ are expressed by the formula
\[ d_k = h_{H^+_s(p,0)} \big(\Tr\, (U^{\ot 2k}) \big),\]
where $h_{H^+_s(p,0)}$ denotes the Haar state. Using the argument similar to that of Corollary 7.4 of \cite{stocheasy} and the
identification in the proof of Theorem \ref{thHspq} one can show that the variables $\mathbf{p}=p_A+ p_B+ p_C + p_D + \cdots$ and
$\mathbf{q}=q_A +q_B +q_C +q_D+ \cdots$ (see the matrix \eqref{fundhs}) are free Poisson variables of parameter $\frac{1}{4}$ --
note that this provides a concrete realisation of the variables found in \cite{Bessel}. Thus we are left with the computation of
the moments of 2 copies of a difference of such two free Poisson variables. This can be deduced from the results in
\cite{Bessel}: first note that according to Theorem 7.1 of \cite{Bessel} the $2k$-th moment of $\mathbf{p} - \mathbf{q}$ is  the
$k$-th moment of the free Bessel law with parameters $(2, \frac{1}{2})$,  and then combine the observation in the proof of
Theorem 5.2 of \cite{Bessel} with Theorem 4.3 of the same paper that the latter is equal to $\sum_{\pi \in NC_{\tu{even}}(0,2k)}
\left(\frac{1}{2}\right)^{\nu(\pi)}$, so that
\begin{equation} d_k = 2^k \sum_{\pi \in NC_{\tu{even}}(0,2k)}
\left(\frac{1}{2}\right)^{\nu(\pi)}=  \sum_{\pi \in NC_{\tu{even}}(0,2k)} 2^{k-\nu(\pi)} \label{Besselcount}\end{equation} (where
$\nu(\pi)$ denoted the number of blocks in the partition $\pi$). As in the proof of Theorem \ref{catSp} we observe  that the
number in \eqref{Besselcount} indeed equals $\card(NC_{\bullet}(0,2k))$ due to the multiplication factor expressing the choice of
colourings of bullets in each block of $\pi$.
\end{proof}

We are now ready to describe the category of partitions corresponding to the representation theory of $H^+_s(p,q)$.

\begin{tw} \label{catHsfree}
Let $U$ denote the fundamental representation of $H^+_s(p,q)$ introduced in Definition \ref{Hspq}.  Then for all $k,l \in \bn_0$
$$\Hom(U^{\otimes k};U^{\otimes l})=\tu{span}(\hat{T}_\pi|\pi\in (NC_{\bullet, \tu{even}} \star NC_{\tu{even}})(k,l)).$$
\end{tw}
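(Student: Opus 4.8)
The plan is to follow the proof of Theorem \ref{catfree} essentially verbatim, replacing the pair $(NC_{\bullet}, NC)$ by $(NC_{\bullet,\tu{even}}, NC_{\tu{even}})$ and using the decomposition $H^+_s(p,q)\approx H^+_s(p,0)\,\hat{\star}\,H^+_s(0,q)$ of Theorem \ref{thHspq} (recall that $H^+_s(0,q)\approx H^+_q$, while $H^+_s(p,0)\approx H^{4+}_p$ has category $NC_{\bullet,\tu{even}}$ by Theorem \ref{catHsp}). First I would record that $NC_{\bullet,\tu{even}}\star NC_{\tu{even}}$ is a tensor category in the sense of Woronowicz and that the free-product implementation $\pi\mapsto\hat{T}_\pi$, built from the bulleted-even maps of Theorem \ref{catHsp} on the first colour and the standard even maps on the second, carries concatenation, tensoring and reflection to composition, tensor product and adjoint (the colours having to match in concatenation). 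Since the fundamental representation $U$ of Definition \ref{Hspq} satisfies all the intertwiner relations thus encoded, the Tannaka--Krein duality of \cite{wor2} produces a compact quantum group $\QG'$ whose morphism spaces contain $\tu{span}(\hat{T}_\pi\mid\pi\in(NC_{\bullet,\tu{even}}\star NC_{\tu{even}})(k,l))$, together with a surjection $C(\QG')\twoheadrightarrow C(H^+_s(p,q))$, i.e.\ an inclusion $\QG'\supset H^+_s(p,q)$. Exactly as in Theorem \ref{catfree} it then suffices to establish the reverse dimension inequality: for every $k$ the number $d_k=\card(\Hom(1;U^{\otimes k}))$ equals $\card\big((NC_{\bullet,\tu{even}}\star NC_{\tu{even}})(0,k)\big)$, both sides vanishing for odd $k$ since all blocks are even.

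For the count I would pass to free cumulants. As the Haar state of a dual free product is the free product of the Haar states (\cite{Wangfree}), the character of $U$ is distributed as the free convolution of the characters of $H^+_s(p,0)$ and of $H^+_q$, so by Proposition 12.3 of \cite{NicaSpeicher} its free cumulants are the sums of the cumulants of the two factors. The character of $H^+_q$ has $\kappa_{2m}=1$ and vanishing odd cumulants, as is read off from its moment sequence $\card(NC_{\tu{even}}(0,k))$. For $H^+_s(p,0)$, Theorem \ref{catHsp} computes the $2k$-th moment as $\sum_{\pi\in NC_{\tu{even}}(0,2k)}2^{k-\nu(\pi)}$; writing this as a product over blocks shows that the free cumulant on a block $b$ equals precisely the bulleting multiplicity $c_b$ of such a block in $NC_{\bullet,\tu{even}}$ (in the notation of Theorem \ref{catHsp}, $c_b=2^{\tu{card}(b)/2-1}$, with odd cumulants $0$). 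If one prefers to derive this from the free Bessel description of \cite{Bessel} rather than reading it off Theorem \ref{catHsp}, the scaling Lemma \ref{lemcum} supplies the factor accounting for the ``two copies'' appearing in the trace of $U$. Adding the two cumulant sequences gives the free cumulant $c_b+1$ on each block, and the moment-cumulant formula \eqref{momcum} yields
\[ d_k=\sum_{\pi\in NC_{\tu{even}}(0,k)}\ \prod_{b:\,\tu{block in }\pi}\big(c_b+1\big). \]

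Finally I would match this with the combinatorial count. Each element of $(NC_{\bullet,\tu{even}}\star NC_{\tu{even}})(0,k)$ arises from a non-crossing even partition $\pi$ of $k$ points by assigning to every block one of the two colours: a block chosen from $NC_{\tu{even}}$ contributes a single configuration, while a block chosen from $NC_{\bullet,\tu{even}}$ contributes its $c_b$ inequivalent bulletings (the multiplicity already used in Theorem \ref{catHsp}). Hence the number of such partitions is exactly $\sum_{\pi}\prod_b(c_b+1)=d_k$, and a standard Peter--Weyl argument upgrades the inclusion $\QG'\supset H^+_s(p,q)$ to equality of the Hom-spaces for all $k,l\in\bn_0$. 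The main obstacle is the cumulant computation: one must correctly identify the free cumulants of the $H^+_s(p,0)$ character and see that they coincide with the bulleting multiplicities $c_b$, reconciling the ``two copies'' scaling with the multiplicity recorded in Theorem \ref{catHsp}; this coincidence is precisely what makes the two counts agree and is where Theorem \ref{catHsp} and Lemma \ref{lemcum} do the essential work.
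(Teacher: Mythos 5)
Your strategy is exactly the paper's: reduce via Tannaka--Krein and Frobenius duality to matching $d_k=\card(\Hom(1;U^{\otimes k}))$ against $\card\big((NC_{\bullet,\tu{even}}\star NC_{\tu{even}})(0,k)\big)$, use freeness of the Haar state of the dual free product to add the free cumulants of the two characters, and match the resulting block-weights $(\kappa_{\tu{card}(b)}+1)$ with ``choice of colour plus choice of bulleting''. However, there is a genuine numerical error at the crux, precisely the point you flag as the main obstacle. The bulleting multiplicity of a block with $l$ legs is $2^{l-1}$, not $2^{l/2-1}$: each of the $l$ legs carries one of two colours and the only identification is the global colour flip within the block, so there are $2^l/2$ inequivalent patterns. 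Correspondingly, the even free cumulants of the character of $H^+_s(p,0)$ are $\kappa_l=2^{l-1}$, which is exactly what Lemma \ref{lemcum} gives: the character is $2X$ with $X=\mathbf{p}-\mathbf{q}$, the law of $X\boxplus X$ has all even free cumulants equal to $1$ (it is the free Bessel law appearing in \cite{Bessel}), and the lemma rescales these by $2^{l-1}$. With your value $c_b=2^{\tu{card}(b)/2-1}$ the identity $d_k=\sum_\pi\prod_b(c_b+1)$ is only a formal tautology inside your own bookkeeping: the left-hand side is not the actual fixed-point dimension and the right-hand side is not the actual cardinality of $(NC_{\bullet,\tu{even}}\star NC_{\tu{even}})(0,k)$ (already for $k=2$ the partition set has $2+1=3$ elements, not $1+1=2$).

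The source of the slip is that you read the value off the displayed moment formula \eqref{Besselcount}, whose prefactor is itself off (the $2k$-th moment of the character of $H^+_s(p,0)$ is $2^{2k}\sum_{\pi\in NC_{\tu{even}}(0,2k)}(\tfrac12)^{\nu(\pi)}=\sum_\pi 2^{2k-\nu(\pi)}$, since the character is \emph{twice} the difference of the two free Poisson variables), rather than deriving it from Lemma \ref{lemcum} or from the elementary count of bulletings. Checking either of those independently would have revealed the mismatch. Once $c_b$ is corrected to $2^{\tu{card}(b)-1}$ throughout, your argument coincides with the paper's proof and is complete.
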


\begin{proof}
We only provide the argument for the equality of the number of partitions in $(NC_{\bullet, \tu{even}} \star
NC_{\tu{even}})(0,k)$ and the $k$-th moment of the character of the defining representation of $H^+_s(p,q)$. The latter can be
expressed via the moment-cumulant formula \eqref{momcum}, with the free cumulants $(\kappa_k)_{k=1}^{\infty}$ given by the sum of
the free cumulants of the corresponding laws of the characters for $H^+_s(p,0)$ and $H^+_s(0,q)$.  As the second law is the free
Bessel distribution, another use of the moment-cumulant formula \eqref{momcum} and Theorem 4.3 of \cite{Bessel} implies that the
even free cumulants for it are equal to $1$, and the odd ones vanish. As the free Bessel laws form a free convolution semigroup,
Lemma \ref{lemcum} and the proof of Theorem \ref{catHsp} imply then that the odd free cumulants related to $H^+_s(0,q)$ vanish
and even ones are equal to $2^{k-1}$. Hence the moment-cumulant formula for the $k$-th moment of the law we are interested in
yields:
\[ d_k = \sum_{\pi \in NC_{\tu{even}}(0,k)}\,  \prod_{b: \tu{block in } \pi}\, (2^{\tu{card}(b)-1} +1).\]
The rest of the proof follows as in Theorem \ref{catfree}.

\end{proof}

\vspace*{0.2cm} \noindent \textbf{Acknowledgment.} Part of the work on this paper was began during the visit of the second-named author to
the Universit\'e Cergy-Pontoise in April 2010. The work of the first-named author was supported by the ANR grant `Galoisint'.

\end{document}